\documentclass[a4paper, 10pt, english, reqno]{amsart}
\usepackage[utf8]{inputenc}
\usepackage{fullpage}
\usepackage{graphicx}
\usepackage{amsmath,amssymb, amsfonts, amsthm, mathtools}
\usepackage{mathrsfs}
\usepackage{xcolor}
\usepackage{hyperref}
\usepackage{algorithm}
\usepackage{algpseudocode}
\usepackage{subcaption}
\usepackage{float}
\usepackage{thmtools}
\usepackage{empheq}
\usepackage{cleveref}
\usepackage{booktabs}

\DeclareMathOperator*{\Law}{\text{Law}}

\def\R{\mathbb R}

\newcommand{\tnorm}[1]{\left\vert\kern-0.25ex\left\vert\kern-0.25ex\left\vert #1
\right\vert\kern-0.25ex\right\vert\kern-0.25ex\right\vert}

\newtheorem{theorem}{Theorem}[section]
\newtheorem{lemma}[theorem]{Lemma}
\newtheorem{assumptions}[theorem]{Assumption}
\newtheorem{definition}[theorem]{Definition}
\newtheorem{proposition}[theorem]{Proposition}
\newtheorem{remark}[theorem]{Remark}

\newcounter{Acond}

\crefname{Acond}{Assumption}{Assumption}

\title{Mean-Field PhiBE: Continuous-Time Mean-Field Reinforcement Learning from Discrete-Time Data}

\author[E. Bayraktar]{Erhan Bayraktar\textsuperscript{1}}
\thanks{\textsuperscript{1} Department of Mathematics, University of Michigan, Ann Arbor, MI, USA}

\author[M. Hernandez]{Martin Hernandez\textsuperscript{2,*}}
\thanks{\textsuperscript{2} Department of Statistics and Data Science, University of California, Los Angeles, CA, USA}
\author[Q. Yan]{Qinxin Yan\textsuperscript{3}}
\thanks{\textsuperscript{3} Program in Applied and Computational Mathematics, Princeton University, Princeton, NJ, USA}

\author[Y. Zhu]{Yuhua Zhu\textsuperscript{2}}

\thanks{\textsuperscript{*} Corresponding author (\texttt{martinh@ucla.edu}).}

\date{}

\begin{document}

\begin{abstract}
This paper develops a model-free framework for continuous-time mean-field control when the population evolves according to unknown controlled McKean--Vlasov dynamics and only discrete-time transition data are available. Model-based mean-field control requires the continuous-time drift and diffusion coefficients, which are not directly observed from fixed-step transitions, while a direct reduction to a discrete-time Bellman equation loses the continuous-time generator structure. To bridge these two viewpoints, we introduce a Mean-Field-PhiBE (MF-PhiBE), which incorporates discrete-time transition information into a continuous-time PDE on the Wasserstein space. The MF-PhiBE replaces the unknown infinitesimal drift and covariance in the policy-evaluation equation by one-step estimators computed from data, while preserving the generator structure of the McKean-Vlasov HJB equation. We also derive a policy-gradient theorem for entropy-regularized randomized feedback policies, expressing the actor direction through an action-wise infinitesimal advantage and the score of the policy. Combining these two ingredients yields a model-free actor-critic method. We prove a first-order consistency estimate showing that the value induced by an optimal MF-PhiBE policy approximates the optimal continuous-time value as the observation time step vanishes. For entropy-regularized LQR, we establish first-order policy convergence and second-order value convergence; under suitable conditions, the population-averaged feedback means coincide exactly. Numerical experiments on an LQR benchmark and a crowd-aversion problem illustrate the proposed framework.
\end{abstract}

\maketitle

\noindent\textbf{2020 Mathematics Subject Classification.}
Primary 93E20; Secondary 49L20, 60H30, 65C30, 68T05.

\smallskip

\noindent\textbf{Keywords and phrases.}
Mean-field reinforcement learning, McKean--Vlasov control,
continuous-time reinforcement learning, physics-informed Bellman
equation, policy gradient.

\medskip

\setcounter{tocdepth}{1}
\tableofcontents

\section{Introduction}
Many modern control problems arise from systems that evolve continuously in time, while data are available only at discrete observation times. This situation appears naturally in physical and engineered systems, where the state continues to evolve between measurements and interventions \cite{lin1998optimal,song2023reaching,pham2009continuous}. Continuous-time reinforcement learning (CTRL) addresses this setting by combining reinforcement learning with stochastic control models driven by differential equations; see, for instance, \cite{wang2020,jia2022,jia2023,jia_qlearning2023}. A direct reduction to a Markov decision process (MDP) is often computationally convenient, but it does not use the differential structure of the underlying dynamics. Consequently, even with arbitrarily many samples, the learned policy may retain a discretization error caused by the time-discrete formulation itself. This issue is one of the main motivations behind PDE-based approaches such as PhiBE and Optimal-PhiBE, which use discrete-time transition data while preserving the smoothness structure of the physical system \cite{zhu2024phibe,zhu2025optimalphibe}. This distinction is illustrated in \Cref{fig:value-functions-intro}, showing how the continuous-time value obtained from our framework remains much closer to the optimal value than the policy obtained after reducing the problem to a time-discrete Bellman formulation (MDP). 

The importance of continuous-time structure becomes even more pronounced in mean-field control. Mean-field control, also known as MKV control, describes the optimal control of a large population of cooperative agents through the evolution of its probability law; see \cite{carmona2018probabilistic1,carmona2018probabilistic2,benbook,Lacker}. The relevance of this structure becomes clear when one compares the model-based and the model-free settings. If the MKV coefficients are known, the value of a fixed policy can be evaluated through a stationary linear equation on $\mathcal P_2(\mathbb R^d)$, whose generator depends on the drift and diffusion coefficients and on Lions derivatives with respect to the population law \cite{CarmDel15,PhamWei}. Consequently, this characterization cannot be used directly in the model-free setting, where the MKV coefficients are unknown.

We consider an entropy-regularized infinite-horizon mean-field control problem with randomized feedback policies. Entropy regularization promotes exploration and is widely used in continuous-time reinforcement learning \cite{wang2020,jia2022,jia2023} and in continuous-time mean-field actor-critic methods \cite{frikhaActorCritic,Wei2023ContinuousTQ}. In the model-based setting, the corresponding actor-critic structure is determined by two objects: a stationary linear HJB equation on $\mathcal P_2(\mathbb R^d)$ for policy evaluation and a policy-gradient representation for policy improvement. Our focus is instead on the model-free regime, in which the drift and diffusion coefficients of the MKV dynamics are unknown and the learner has access only to discrete-time samples of states, population laws, actions, subsequent states, and rewards. The central question is how to approximate the continuous-time policy-evaluation and policy-improvement mechanisms from these observations without replacing the original control problem by a time-discrete MDP. The following contributions address this question.
\smallbreak

\paragraph{Contributions}

\noindent

\textbf{1. Policy gradient for continuous-time McKean--Vlasov control.} We extend the policy-gradient framework developed in our previous work for the representative-state/population formulation \cite{firstpaper} to the law-based social-planner formulation considered here. The resulting policy gradient takes an advantage-times-score form analogous to the classical policy-gradient formula in reinforcement learning, with the advantage characterized by an action-wise infinitesimal advantage function. This formulation provides a policy-improvement mechanism tailored to McKean--Vlasov dynamics.
\smallbreak

\textbf{2. MF-PhiBE from discrete-time transition dynamics.}
When only discrete-time transition dynamics are available, we introduce MF-PhiBE, which embeds this information into a continuous-time mean-field control formulation while preserving the McKean--Vlasov structure and avoiding the generally unidentifiable continuous-time drift and diffusion terms. We establish a first-order error bound for the value achieved by the optimal MF-PhiBE policy. For entropy-regularized LQR, we establish first-order policy convergence and second-order value convergence.  In particular, under suitable conditions, the population-averaged feedback means of the MF-PhiBE and true optimal policies coincide exactly.
\smallbreak


\textbf{3. Model-free algorithm from discrete-time data.}
When only discrete-time transition samples are available, we develop a model-free actor--critic algorithm based on the MF-PhiBE approximation. Both the critic and the policy-gradient direction are estimated directly from one-step transition data, yielding an implementable learning procedure that requires neither knowledge nor estimation of the continuous-time McKean--Vlasov drift and diffusion coefficients.

\begin{figure}
    \centering
    \includegraphics[width=0.95\linewidth]{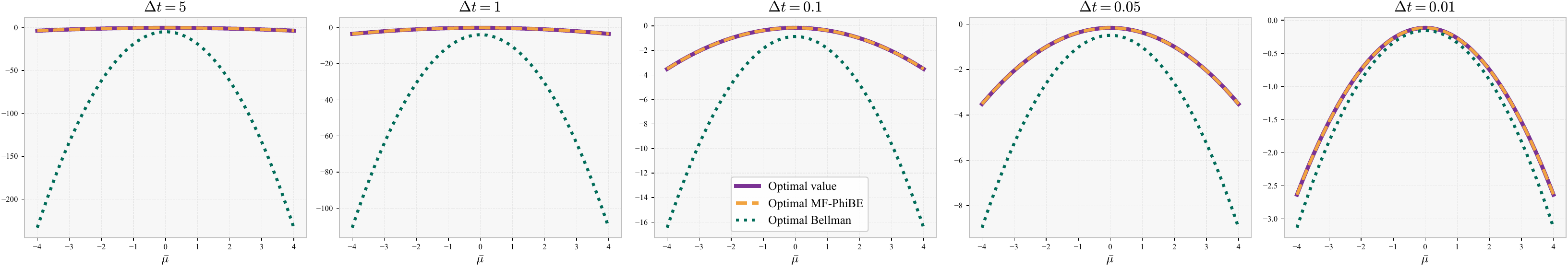}
   \caption{Illustration of the continuous-time value function obtained from different feedback policies. For the same time-step information, the policy produced by our framework provides a substantially closer approximation of the optimal continuous-time value function than the policy obtained from the time-discrete Bellman problem.}
    \label{fig:value-functions-intro}
\end{figure}

\smallskip
\noindent\textbf{Related work.}  
Continuous-time reinforcement learning has been studied through stochastic-control, policy-iteration, martingale, $q$-learning, policy-gradient, and actor--critic approaches \cite{doya2000,Munos2006PolicyGradient,VamvoudakisLewis2010,LeeSutton2021,wang2020,jia2022,jia2023,jia_qlearning2023,ZHAOPolicyOptimization}. Approaches designed to exploit continuous-time structure from discretely sampled transitions include \cite{KimShinYang2021,SettaiTakeishiYairi2025}, while sampling and time-discretization effects are analyzed in \cite{tallec2019,JiaOuyangZhang2025,reconciling2026,PhamZhangZhu2026}. Existing extensions to mean-field control broadly follow two directions. The first direction formulates both the mean-field dynamics and the learning problem directly in discrete time. Mean-field MDP formulations are studied in \cite{motte2022mean,bauerle2023mean}. Dynamic-programming and value-based learning methods are developed in \cite{angiuli2022unified,carmona2023model,gu2021,gu2023}, while finite approximations and $Q$-learning for mean-field-type multi-agent control are considered in \cite{BayraktarKaraBauerle}. Function-approximation methods are analyzed in \cite{BayraktarKaraJMLR}, and \cite{meunierpham2026} extends the classical REINFORCE algorithm to discrete-time mean-field control. By contrast, our method uses discrete-time observations to approximate the infinitesimal generator of an underlying continuous-time McKean--Vlasov control problem. 

The second direction preserves the continuous-time control structure through policy-gradient, actor--critic, and q-learning methods for mean-field control \cite{frikhaActorCritic,Wei2023ContinuousTQ}. These methods retain the continuous-time McKean--Vlasov structure when the dynamics are known or continuous-time observations are available. When only discrete-time data are available, however, their model-free implementations rely on temporal discretization and therefore lose the continuous-time generator structure. In addition, the model-free actor update in \cite{frikhaActorCritic} is directly implementable only for special dynamics, while \cite{Wei2023ContinuousTQ} requires a restrictive parametrization of the essential (q)-function and the estimation of an integrated (q)-function through a family of test policies. By contrast, our model-free algorithm uses MF-PhiBE to preserve the continuous-time generator structure even when only discrete-time data are available. Moreover, it applies to general McKean--Vlasov dynamics and requires neither an integrated (q)-function nor martingale conditions over test policies.

The key challenge addressed in this paper is how to exploit the smooth structure of continuous-time dynamics when only discrete-time information is available. 
The PhiBE frameworks \cite{zhu2024phibe,zhu2025optimalphibe} address this difficulty in the single agent setting: rather than replacing the continuous-time problem by a discrete-time Bellman equation, they use one-step transition information to approximate the infinitesimal generator and quantify the resulting time-discretization error. The present paper extends this generator-based approach to MKV control.

The entropy-regularized LQ mean-field problem is analyzed in \cite{frikha2024full}, where common noise is included, the optimal Gaussian policy is characterized through coupled Riccati equations, and policy-gradient algorithms are studied in both exact and model-free settings. Our general framework is not restricted to the LQR setting. The LQR problem is used instead as an analytically tractable benchmark for MF-PhiBE.

Finally, our work is related to the broader analytical and numerical literature on mean-field control and PDEs on Wasserstein space. Comparison principles and viscosity-solution theory for second-order equations on the Wasserstein space are developed in \cite{BayraktarEkrenZhangCPDE2025,BayraktarEkrenHeZhangJDE2026}, while particle approximations are analyzed in \cite{BayraktarEkrenZhangParticleSICON2025}. Numerical methods include PDE and augmented-Lagrangian schemes \cite{AchdouLauriereCongestionI,AchdouLauriereCongestionII}, FBSDE methods \cite{chassagneux2019}, Markov-chain approximations and asymptotic methods for related queueing mean-field games \cite{BayraktarBudhirajaCohenAAP,BudhirajaCohenBayraktar2018}, mean-field with common-noise \cite{RenWeiYuZhouCommonNoiseI,RenWeiYuZhouCommonNoiseII}, and neural-network-based solvers \cite{ruthotto2020,Carmona2022,soner2025learning}; see also \cite{lauriere2021} for a survey. These works do not address the setting considered here: continuous-time MKV dynamics with unknown coefficients, observed only through discrete-time transition data. 


\section{Problem formulation}
\label{sec:formulation}

Let $d,n,m\in\mathbb N$. The state space is $\mathbb R^d$, the action space is a Borel set $\mathcal A\subseteq \mathbb R^m$, and the population state is described by a measure in $\mathcal P_2(\mathbb R^d)$, the set of Borel probability measures on $\mathbb R^d$ with finite second moment. Let $\nu$ be a $\sigma$-finite reference measure on $(\mathcal A,\mathcal B(\mathcal A))$. We equip $\mathcal P_2(\mathbb R^d)$ with the $2$-Wasserstein distance
\[
  \mathcal W_2(\mu,\varrho)
  :=
  \inf_{\gamma\in\Gamma(\mu,\varrho)}
  \left(
    \int_{\mathbb R^d\times\mathbb R^d}
    |x-y|^2\,\gamma(dx,dy)
  \right)^{1/2}.
\]

In the present work, the control at a state-law pair $(x,\mu)$ is allowed to be randomized. Thus a policy assigns to each $(x,\mu)\in\mathbb R^d\times\mathcal P_2(\mathbb R^d)$ a probability measure on the action space, and is represented by a Markov kernel
\[
  \pi:\mathbb R^d\times\mathcal P_2(\mathbb R^d)
  \longrightarrow
  \mathcal P(\mathcal A),
  \qquad
  (x,\mu)\longmapsto \pi(\cdot\mid x,\mu).
\]
We define $\Pi_{\mathrm{add}}$ as the class of randomized feedback policies satisfying $\pi(\cdot\mid x,\mu)\ll\nu$ for every $(x,\mu)\in\mathbb R^d\times\mathcal P_2(\mathbb R^d)$. For $\pi\in\Pi_{\mathrm{add}}$, the density with respect to $\nu$ is denoted by $p_\pi(a\mid x,\mu)$, and is assumed to be jointly measurable in $(x,\mu,a)$ and strictly positive for $\nu$-a.e. $a\in\mathcal A$ and every $(x,\mu)$.

We consider an infinite-horizon discounted mean-field optimal control
problem. Starting from an initial condition
$\mu\in\mathcal P_2(\mathbb R^d)$, a social planner seeks to optimize
the collective performance of the population over time. This leads to
the value function
\begin{align}\label{eq:optimal_CP_mkv}
  \mathcal V^*(\mu)
  \,:=\,
  \sup_{\pi\in\Pi_{\mathrm{add}}}\mathcal V^\pi(\mu),
  \qquad
  \mu\in\mathcal P_2(\mathbb R^d).
\end{align}
Whenever there exists a policy $\pi^*\in\Pi_{\mathrm{add}}$ such that
$\mathcal V^*(\mu)=\mathcal V^{\pi^*}(\mu)$ for all
$\mu\in\mathcal P_2(\mathbb R^d)$, we call $\pi^*$ an optimal policy. Here $\mathcal V^\pi(\mu)$ denotes the value associated with the policy
$\pi$, when the system starts from the
initial law $\mu\in\mathcal P_2(\mathbb R^d)$. More precisely, the
corresponding value is defined by
\begin{align}\label{eq:value_fn_mkv}
  \mathcal V^\pi(\mu)
  \,:=\,
  \mathbb E\!\left[
    \int_0^\infty e^{-\beta t}
    r_\lambda^\pi(\mu_t)\,dt
    \;\bigg|\;
    \mu_0=\mu
  \right],
\end{align}
where $\beta>0$ is the discount factor, $r_\lambda^\pi$ is the running
reward, and $(\mu_t)_{t\ge0}$ denotes the controlled population state. The running reward in \eqref{eq:value_fn_mkv} corresponds to the
entropy-regularized functional
\begin{equation}\label{eq:reg_reward_avg_mkv}
  \begin{aligned}
    r_\lambda^\pi(\mu)&:=
\int_{\mathbb R^d}
\Bigl(r^\pi(x,\mu)+\lambda\,\mathcal H\bigl(\pi(\cdot\mid x,\mu)\bigr)\Bigr)\,\mu(dx).\\
    &:= \int_{\R^d}\int_{\mathcal A} \Bigl(r(x,\mu,a)
    -\lambda
    \log p_\pi(a\mid x,\mu)\Bigr)\pi(da\mid x,\mu)\mu(dx)
  \end{aligned}
\end{equation}
Here $r:\mathbb R^d\times\mathcal P_2(\mathbb R^d)\times\mathcal A\to\mathbb R$ denotes the running reward, $\lambda>0$ is the temperature parameter, and $\mathcal H(\pi(\cdot\mid x,\mu))$ denotes the Shannon entropy of the action distribution $\pi(\cdot\mid x,\mu)$ with respect to the reference measure $\nu$. In reinforcement learning, this entropy term is usually interpreted as an exploration regularizer: it penalizes overly concentrated action distributions and favors randomized policies, thereby encouraging broader exploration of the action space. Entropy-regularized formulations are widely used in continuous-time reinforcement learning and continuous-time mean-field learning; see, for instance, \cite{wang2020,jia2022,jia2023,jia_qlearning2023,Wei2023ContinuousTQ,frikhaActorCritic}.

We now introduce the controlled dynamics associated with a policy
$\pi\in\Pi_{\mathrm{add}}$. Let
$b:\mathbb R^d\times\mathcal P_2(\mathbb R^d)\times\mathcal A
\to\mathbb R^d$
and
$\sigma:\mathbb R^d\times\mathcal P_2(\mathbb R^d)\times\mathcal A
\to\mathbb R^{d\times n}$
be measurable coefficients, and consider the averaged drift and diffusion matrix
as
\begin{align}\label{eq:aggregated_coeffs_mkv}
  b^\pi(x,\mu)
  := \int_{\mathcal A} b(x,\mu,a)\,\pi(da\mid x,\mu),
  \qquad
  \Sigma^\pi(x,\mu)
  := \int_{\mathcal A}
  \sigma(x,\mu,a)\sigma(x,\mu,a)^\top\,\pi(da\mid x,\mu),
\end{align}
and we write
$\sigma^\pi(x,\mu):=\bigl(\Sigma^\pi(x,\mu)\bigr)^{1/2}
\in\mathbb R^{d\times d}$ for the symmetric positive semidefinite
square root of $\Sigma^\pi$. The population state then evolves
according to the McKean-Vlasov equation
\begin{align}\label{eq:MKV_only_SDE}
  ds_t
  = b^\pi(s_t,\mu_t)\,dt
  + \sigma^\pi(s_t,\mu_t)\,d B_t,
  \qquad
  s_0\sim\mu,
  \qquad
  \mu_t=\Law(s_t),
\end{align}
where $B$ is a $d$-dimensional Brownian motion. Thus, the
population law $(\mu_t)_{t\ge0}$ is generated by the controlled
McKean-Vlasov dynamics, and the objective functional
\eqref{eq:value_fn_mkv} evaluates the resulting collective
performance over an infinite horizon.

Formally, for each initial law $\mu\in\mathcal P_2(\mathbb R^d)$ and
each horizon $T>0$, we work on a probability space
$(\Omega,\mathcal F,\mathbb P)$ endowed with a filtration
$\mathbb F=(\mathcal F_t)_{0\le t\le T}$ such that $\mathcal F_0$
contains all $\mathbb P$-null sets of $\mathcal F$, the filtration is
right-continuous, and
$(\Omega,\mathcal F,\mathbb F,\mathbb P)$ supports a
$d$-dimensional $\mathbb F$-Brownian motion $B$, together with
an $\mathcal F_0$-measurable random variable $s_0$ satisfying
$\Law(s_0)=\mu$ and independent of $B$. Under the
standing assumptions introduced below, equation
\eqref{eq:MKV_only_SDE} admits a unique strong solution, and the law
flow $(\mu_t)_{0\le t\le T}$ is deterministic.

\begin{remark}[Interpretation of randomized feedback policies]\label{rem:randomized-feedback-interpretation}In our formulation, a randomized feedback policy $\pi\in\Pi_{\mathrm{add}}$ is interpreted in relaxed form, through the averaged drift and covariance in \eqref{eq:aggregated_coeffs_mkv}. Under weak uniqueness, this formulation has the same marginal laws as the continuously randomized MKV dynamics; see \cite{KarouiMeleard1990,Lacker,BayraktarCossoPham2018,Wei2023ContinuousTQ}. By contrast, policies sampled on a time grid and held fixed between sampling times generally differ at finite mesh and converge to the averaged dynamics as the mesh vanishes; see \cite{reconciling2026} for the single-agent case. 
\end{remark}

\smallbreak
\noindent
\textbf{Organization. }The rest of the paper is organized as follows. In
Section~\ref{sec:HJB_mkv} we analyze the associated stationary
Hamilton-Jacobi-Bellman equation used for policy evaluation.
Section~\ref{sec:policy-gradient} presents the policy gradient
formula for the McKean-Vlasov problem. In
Section~\ref{sec:MF-phibe} we introduce the mean-field PhiBE and study
its approximation properties. Section~\ref{sec:LQR}
specializes the analysis to the linear-quadratic setting and studies
the corresponding PhiBE error. Finally,
Section~\ref{sec:algorithm} presents the resulting
finite-dimensional policy-iteration scheme, and
Section~\ref{sec:numerics} presents the numerical examples.

\section{Policy gradient for McKean-Vlasov control}
\label{sec:policy-gradient}
This section develops the model-based policy-gradient framework for the McKean-Vlasov control problem introduced in \Cref{sec:formulation}. We first study the stationary HJB equation on $\mathcal P_2(\mathbb R^d)$ associated with a fixed policy $\pi$. This equation provides the policy-evaluation step: when the coefficients of the controlled MKV dynamics are known, the value functional $\mathcal V^\pi$ is characterized as the solution of a linear equation on the space of probability measures. We then derive the corresponding policy gradient theorem, which expresses the first variation of the performance functional in terms of the evaluated value function and an action-wise advantage kernel. Together, the HJB evaluation equation and the policy gradient formula define a model-based actor-critic scheme. Although this section assumes access to the infinitesimal generator, the same structure will be used in \Cref{sec:MF-phibe} as the basis for the model-free algorithm: the policy-evaluation equation will be replaced by the MF-PhiBE constructed from discrete-time data, while the actor update will still be driven by the policy gradient representation developed here, after replacing the unknown coefficients by the MF-PhiBE estimators.

\subsection{Stationary HJB on $\mathcal P_2(\mathbb R^d)$}
\label{sec:HJB_mkv}

In the McKean-Vlasov setting, the natural object of study is the
law-value functional $\mathcal V^\pi(\mu)$ introduced in
\eqref{eq:value_fn_mkv}. For a fixed policy $\pi\in\Pi_{\mathrm{add}}$, the
corresponding Hamilton-Jacobi-Bellman equation (HJB) is a linear equation posed on
$\mathcal P_2(\mathbb R^d)$.

We begin by introducing the infinitesimal generator of the law process
$(\mu_t)_{t\ge0}$ associated with \eqref{eq:MKV_only_SDE}. If
$ F:\mathcal P_2(\mathbb R^d)\to\mathbb R$ is sufficiently
smooth in the Lions sense, we define
\begin{equation}
\label{eq:L-pi-mkv}
  \begin{aligned}
    (\mathcal L_{b,\Sigma}^{\pi} F)(\mu)
    :=
    \int_{\mathbb R^d}
      \Bigl[
        b^{\pi}(\xi,\mu)\cdot \partial_{\mu} F(\mu)(\xi)
        + \tfrac{1}{2}\,\Sigma^{\pi}(\xi,\mu)
          : D_\xi\partial_{\mu} F(\mu)(\xi)
      \Bigr]\,\mu(d\xi).
  \end{aligned}
\end{equation}
We introduce the following regularity class
for functionals on $\mathcal P_2(\mathbb R^d)$.

\begin{definition}\label{def:C2poly_mkv}
A map $ F:\mathcal P_2(\mathbb R^d)\to\mathbb R$ belongs to
$\mathcal C^2(\mathcal P_2(\mathbb R^d))$ if it is Lions
differentiable on $\mathcal P_2(\mathbb R^d)$ and the maps
$(\mu,\xi)\mapsto \partial_\mu F(\mu)(\xi)$ and
$(\mu,\xi)\mapsto D_\xi\partial_\mu F(\mu)(\xi)$ are jointly
continuous.

We say that
$ F\in\mathcal C^2_{\mathrm{poly}}(\mathcal P_2(\mathbb R^d))$
if $ F\in\mathcal C^2(\mathcal P_2(\mathbb R^d))$ and there
exists a constant $C_{ F}>0$ such that for all
$(\mu,\xi)\in\mathcal P_2(\mathbb R^d)\times\mathbb R^d$,
\[
| F(\mu)|
\le C_{ F}\bigl(1+m_2(\mu)\bigr),
\]
\[
|\partial_\mu F(\mu)(\xi)|
\le C_{ F}\bigl(1+|\xi|+m_2(\mu)\bigr),
\qquad
\|D_\xi\partial_\mu F(\mu)(\xi)\|
\le C_{ F}\bigl(1+m_2(\mu)\bigr).
\]
\end{definition}

We now state the standing regularity hypothesis.

\begin{assumptions}\label{ass:_mkv_regularity}
Let $\pi\in\Pi_{\mathrm{add}}$ be fixed. Assume that, for every component
$h\in\{b_i^\pi,\sigma_{i,j}^\pi:i,j=1,\dots,d\}.$, the
derivatives $\nabla_s h(s,\mu)$, $D_{ss}^2h(s,\mu)$,
$\partial_\mu h(s,\mu)(\xi)$, and
$D_\xi\partial_\mu h(s,\mu)(\xi)$ exist for all
$(s,\mu,\xi)\in\mathbb R^d\times\mathcal P_2(\mathbb R^d)\times\mathbb R^d$,
are bounded by a constant $K_\pi$, and are locally Lipschitz in
$(s,\mu,\xi)$. Define $\ell^\pi_\lambda(s,\mu):=r^\pi(s,\mu)+\lambda\,\mathcal H\bigl(\pi(\cdot|s,\mu)\bigr)$, and assume in addition that
$\ell^\pi_\lambda\in
\mathcal C^{2,2}_{\mathrm{poly}}(\mathbb R^d\times\mathcal P_2(\mathbb R^d))$.
\end{assumptions}
\begin{remark}
Note that under \Cref{ass:_mkv_regularity}, $r^\pi_\lambda$ belongs to $\mathcal C^2_{\mathrm{poly}}(\mathcal P_2(\mathbb R^d))$. On the other hand, in the context of policy-gradient algorithms, the policy is often parametrized by a family $\{\pi_\omega\}_\omega$. In that case, \Cref{ass:_mkv_regularity} can be verified under smoothness assumptions on the parametrization and on the coefficients. For instance, \Cref{ass:_mkv_regularity} is satisfied by Gaussian policies of the form
\begin{align}\label{eq:genera_gaussina_parametrization}
    \pi_\omega(\cdot\mid s,\mu)
:=
\mathcal N\bigl(f_\omega(s,\mu),\Sigma_0\bigr),\qquad \Sigma_0 \succ 0,
\end{align}
with fixed covariance matrix, provided that $f_\omega$ is twice differentiable with respect to $(s,\mu)$ with bounded derivatives up to second order, and that the functions $b$, $\Sigma$, and $r$, together with their derivatives in $(s,\mu)$ up to second order, are bounded uniformly with respect to $a$. On compact state domains, and in particular on the torus, the boundedness of the spatial derivatives follows from smoothness; see \Cref{remark:compact_domains}. In the affine-quadratic LQR case studied in \Cref{sec:LQR}, \Cref{ass:_mkv_regularity} is verified directly from the explicit form of the averaged coefficients and reward, even when $f_\omega$ is affine and therefore unbounded.
\end{remark}

We can now state the well-posedness of the linear HJB equation, used for policy evaluation.

\begin{theorem}[Stationary HJB equation on $\mathcal P_2(\mathbb R^d)$]
\label{thm:MF_eval_infinite_mkv}
Let $\pi\in\Pi_{\mathrm{add}}$ be fixed and assume \Cref{ass:_mkv_regularity}. Then there exists a constant $\bar\beta_\pi>0$, depending on the constant $K_\pi>0$ from assumption~\ref{ass:_mkv_regularity}, such that, if $\beta>\bar\beta_\pi$, the value functional $\mathcal V^\pi$ defined in \eqref{eq:value_fn_mkv} is well defined and belongs to $\mathcal C^2_{\mathrm{poly}}(\mathcal P_2(\mathbb R^d))$. Moreover, $\mathcal V^\pi$ is the unique classical solution in $\mathcal C^2_{\mathrm{poly}}(\mathcal P_2(\mathbb R^d))$ of
\begin{equation}
\label{eq:HJB-eval-mkv}
  (\mathcal L_{b,\Sigma}^{\pi}-\beta)\mathcal V^\pi(\mu)
  + r_\lambda^\pi(\mu)=0,
  \qquad
  \mu\in\mathcal P_2(\mathbb R^d).
\end{equation}
Finally, there exists $C_\pi>0$, depending on the constants in
Assumption~\ref{ass:_mkv_regularity}, such that, for every
$(x,\nu)\in\mathbb R^d\times\mathcal P_2(\mathbb R^d)$,
\begin{equation}
\label{eq:value-derivative-bound-mkv}
|\partial_\mu\mathcal{V}^\pi(\nu)(x)|
+
\|D_x\partial_\mu\mathcal{V}^\pi(\nu)(x)\|
\le
\frac{C_\pi}{\beta-\bar\beta_\pi}
\bigl(1+|x|^2+m_2(\nu)\bigr).
\end{equation}
\end{theorem}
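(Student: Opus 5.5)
The approach is probabilistic: represent $\mathcal V^\pi$ through the McKean--Vlasov flow, transfer regularity of the flow to $\mathcal V^\pi$, and use Itô's formula along flows of measures for both the equation and uniqueness. Write $(\mu_t^\mu)_{t\ge0}$ for the law flow of \eqref{eq:MKV_only_SDE} started at $\mu$, so that $\mathcal V^\pi(\mu)=\int_0^\infty e^{-\beta t} r^\pi_\lambda(\mu^\mu_t)\,dt$.

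\textbf{Step 1 (well-definedness).} By \Cref{ass:_mkv_regularity}, $b^\pi$ and $\sigma^\pi$ are Lipschitz in $(s,\mu)$ with constant $K_\pi$. Standard MKV estimates then give well-posedness and the bound $m_2(\mu_t^\mu)\le e^{c_1 K_\pi t}(1+m_2(\mu))$. Since $|r^\pi_\lambda(\nu)|\le C(1+m_2(\nu))$, the integral converges once $\beta>c_1K_\pi$, and $|\mathcal V^\pi(\mu)|\le C(1+m_2(\mu))/(\beta-c_1K_\pi)$.

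\textbf{Step 2 (regularity of the flow).} Lions differentiability of $\mu\mapsto\mu^\mu_t$ follows from the theory of Buckdahn--Li--Peng--Rainer and Crisan--McMurray, or Chassagneux--Crisan--Delarue. Consider the first- and second-order tangent processes $\partial_x s_t^{x,\mu}$, $\partial_\mu s^{x,\mu}_t(\xi)$, $\partial_\xi\partial_\mu s_t^{x,\mu}(\xi)$, together with the derivatives in the initial point. They solve linear SDEs whose coefficients are the bounded derivatives of $b^\pi,\sigma^\pi$. Grönwall and BDG then give $L^2$ (and $L^p$) bounds of the form $Ce^{c_2K_\pi t}$, possibly times $(1+|\xi|+m_2(\mu))$ for the second-order terms.

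Apply the chain rule for $t\mapsto r^\pi_\lambda(\mu^\mu_t)$, using $r^\pi_\lambda\in\mathcal C^2_{\mathrm{poly}}$, which holds because $\ell^\pi_\lambda\in\mathcal C^{2,2}_{\mathrm{poly}}$. This yields $|\partial_\mu[r^\pi_\lambda(\mu^\cdot_t)](\mu)(x)|+\|D_x\partial_\mu[\cdot]\|\le Ce^{c_2K_\pi t}(1+|x|^2+m_2(\mu))$. Set $\bar\beta_\pi:=\max(c_1,c_2)K_\pi$. Differentiating under the integral sign, justified by dominated convergence with these exponential bounds, gives $\mathcal V^\pi\in\mathcal C^2$ and \eqref{eq:value-derivative-bound-mkv}, with the factor $\int_0^\infty e^{-(\beta-\bar\beta_\pi)t}dt$. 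Joint continuity of the derivatives follows from the local Lipschitz assumption together with continuity of the tangent processes in $(\mu,\xi)$.

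\textbf{Step 3 (equation and uniqueness).} For the equation, use the flow property $\mu^{\mu}_{t+h}=\mu^{\mu^\mu_h}_t$ to write
\[
\mathcal V^\pi(\mu)=\int_0^h e^{-\beta t}r^\pi_\lambda(\mu_t^\mu)\,dt+e^{-\beta h}\mathcal V^\pi(\mu^\mu_h).
\]
Apply Itô's formula along flows of measures (Carmona--Delarue) to $\mathcal V^\pi(\mu_h^\mu)$, valid because of the $\mathcal C^2_{\mathrm{poly}}$ growth and moment bounds. Dividing by $h$ and letting $h\to0$ gives \eqref{eq:HJB-eval-mkv}.

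For uniqueness, let $W\in\mathcal C^2_{\mathrm{poly}}$ be another solution. Itô's formula applied to $e^{-\beta t}W(\mu^\mu_t)$ gives
\[
W(\mu)=\int_0^Te^{-\beta t}r^\pi_\lambda(\mu_t^\mu)\,dt+e^{-\beta T}W(\mu_T^\mu).
\]
The remainder is bounded by $Ce^{-(\beta-c_1K_\pi)T}(1+m_2(\mu))\to0$, so $W=\mathcal V^\pi$.

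\textbf{Main obstacle.} The hard part is Step 2: controlling the second-order Lions derivative $D_x\partial_\mu$ of the flow with growth that is uniform and at most exponential in $t$. The representation of $\partial_\mu$ of the flow involves an independent copy $\tilde s$ started from $\xi$, and the mixed derivative involves products of first-order tangent processes. I would first handle these using the bounded-derivative assumption, closing Grönwall in $L^4$ so that the products are controlled. Only polynomial factors of order $|x|^2$ should survive, which explains the $1+|x|^2+m_2$ form in \eqref{eq:value-derivative-bound-mkv}.
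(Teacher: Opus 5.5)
Your route differs from the paper's. The paper never differentiates the measure flow. It writes $\mathcal V^\pi(\mu)=\int_{\mathbb R^d}V^\pi(s,\mu)\,\mu(ds)$, with $V^\pi$ the coupled representative--population value function of \cite{firstpaper}. It reads off $\partial_\mu\mathcal V^\pi$ and $D_x\partial_\mu\mathcal V^\pi$ from \eqref{eq:mkv-value-lions-derivative}--\eqref{eq:mkv-value-lions-second-derivative}, and it obtains \eqref{eq:HJB-eval-mkv} by integrating the coupled HJB against $\mu(ds)$; those identities are reused later in the policy-gradient proofs. Your uniqueness step is the same as the paper's.

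\textbf{The gap: the growth bounds.} It sits between your Steps 2 and 3. The theorem asserts $\mathcal V^\pi\in\mathcal C^2_{\mathrm{poly}}(\mathcal P_2(\mathbb R^d))$, which by definition means
$|\partial_\mu\mathcal V^\pi(\mu)(\xi)|\le C(1+|\xi|+m_2(\mu))$ and $\|D_\xi\partial_\mu\mathcal V^\pi(\mu)(\xi)\|\le C(1+m_2(\mu))$.
Step 3 uses exactly these bounds when you say the It\^o formula is ``valid because of the $\mathcal C^2_{\mathrm{poly}}$ growth''. Step 2, however, only gives $C(1+|x|^2+m_2(\mu))$ for both derivatives, and your last paragraph even expects $|x|^2$ to survive in $D_x\partial_\mu$.
\Cref{ass:_mkv_regularity} allows $b^\pi$ to grow linearly and $\Sigma^\pi$ quadratically. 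With your bounds, the integrands of $\mathcal L^\pi_{b,\Sigma}\mathcal V^\pi(\mu)$ can then grow like $|\xi|^3$ and $|\xi|^4$. Likewise, the condition $\int|\partial_\mu\mathcal V^\pi(\mu)(v)|^2\mu(dv)<\infty$ required by the chain rule along flows of measures needs a fourth moment. So for a general $\mu\in\mathcal P_2(\mathbb R^d)$ neither the generator nor the expansion of $\mathcal V^\pi(\mu_h^\mu)$ is justified, and membership in $\mathcal C^2_{\mathrm{poly}}$ is not proved.

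\textbf{What can and cannot be repaired.} The first-order bound is easy to fix from your own representation. Since $|\partial_\mu r^\pi_\lambda(\nu)(z)|\le C(1+|z|+m_2(\nu))$ and the first-order tangent processes are bounded in $L^p$ uniformly in the initial data, $\partial_\mu\mathcal V^\pi$ grows linearly in $x$.

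The second-order bound is not a matter of sharper constants. Let $X^x$ be the decoupled process started at $x$. Then $D_x\partial_\mu$ of $\mu\mapsto r^\pi_\lambda(\mu^\mu_t)$ contains $\mathbb E[\partial_\mu r^\pi_\lambda(\mu_t)(X^x_t)\,\partial_x^2X^x_t]$. This term grows linearly in $x$ once the coefficients are nonlinear, and in general nothing cancels it.

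For example, take $d=1$ and the $\mu$-independent data $b^\pi(s)=\sin s$, $\sigma^\pi\equiv1$, $\ell^\pi_\lambda(s)=-s^2$, with $\beta$ large.
\begin{itemize}
\item Then $D_x\partial_\mu\mathcal V^\pi(\mu)(x)=V''(x)$, where $V(s)=-\mathbb E\int_0^\infty e^{-\beta t}(X^s_t)^2dt$.
\item The pathwise identity $X^{s+2\pi}_t=X^s_t+2\pi$ gives $V''(s+2\pi k)=V''(s)-4\pi k\,g''(s)$.
\item Here $g(s):=\mathbb E\int_0^\infty e^{-\beta t}X^s_t\,dt=\beta^{-1}\bigl(s+h(s)\bigr)$, with $h(s):=\int_0^\infty e^{-\beta u}\mathbb E[\sin X^s_u]\,du$.
\item The function $h$ is $2\pi$-periodic, odd, and positive at $\pi/2$. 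Hence it is not affine, so $g''\not\equiv0$ and $V''$ is unbounded.
\end{itemize}

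So the $\xi$-independent bound demanded by $\mathcal C^2_{\mathrm{poly}}$ cannot come out of your argument. The paper obtains it only by importing $\|\nabla^2_{ss}V^\pi\|\le C(1+m_2)$ from \cite{firstpaper}, and this example is incompatible with that bound as well. To make your proof rigorous you need extra structure, for instance one of the following:
\begin{itemize}
\item bounded $\sigma^\pi$, together with a function class allowing $(1+|\xi|+m_2(\mu))$ growth of $D_\xi\partial_\mu$;
\item affine coefficients, as in the LQR section.
\end{itemize}
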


The proof of \Cref{thm:MF_eval_infinite_mkv} is provided in \Cref{app:HJB-proofs}.
\begin{remark}
\label{rem:hjb-from-coupled-system}
\Cref{thm:MF_eval_infinite_mkv} follows from the policy-evaluation result for the representative/population formulation in \cite{firstpaper}. The passage to the McKean-Vlasov formulation is obtained by integrating out the representative state, using the standard disintegration formula for functions on the Wasserstein space, as in \cite{BayraktarCossoPham2018}.
\end{remark}

\subsection{Policy gradient theorem}
\label{subsec:mkv-policy-gradient}

Let $\pi$ be a fixed feedback randomized policy whose density
$p_\pi(\cdot\mid s,\mu)$ with respect to the reference measure $\nu$ on
$\mathcal A$ is strictly positive. Let $\varphi$ be a measurable signed
kernel $\mathbb R^d\times\mathcal P_2(\mathbb R^d)\to\mathcal M(\mathcal A)$
satisfying
\begin{equation}\label{eq:signed-kernel-zero-mass}
    \varphi(\cdot \mid s,\mu) \ll \pi(\cdot \mid s,\mu),
    \qquad
    \int_{\mathcal A}\varphi(da\mid s,\mu)=0,
    \qquad
    (s,\mu)\in\mathbb R^d\times\mathcal P_2(\mathbb R^d),
\end{equation}
with bounded Radon-Nikodym derivative
$\psi:=d\varphi/d\pi\in L^\infty$. The zero-mass condition in
\eqref{eq:signed-kernel-zero-mass} ensures that $\pi^\varepsilon$
remains a probability kernel for small $\varepsilon$. For $|\varepsilon|$
small, define the perturbed kernel
\begin{equation}\label{eq:epsilon-perturbed-policy}
    \pi^\varepsilon(da\mid s,\mu)
    :=\pi(da\mid s,\mu)+\varepsilon\,\varphi(da\mid s,\mu)
    =
    \bigl(1+\varepsilon\psi(s,\mu,a)\bigr)\,\pi(da\mid s,\mu).
\end{equation}

\begin{assumptions}[Admissible signed perturbation.]\label{ass:signed_perturbation}
    We assume that \eqref{eq:signed-kernel-zero-mass}-\eqref{eq:epsilon-perturbed-policy} hold, and there exists $\varepsilon_0>0$ such that
$\pi^\varepsilon\in\Pi_{\mathrm{add}}$ for every
$|\varepsilon|\leq\varepsilon_0$, and that Assumption \ref{ass:_mkv_regularity} holds
for $\pi^\varepsilon$ with constants independent of $\varepsilon$.
Finally, assume that the perturbation $\varphi$ satisfies the domination
and differentiability hypotheses of \cite[Theorem~4.6]{firstpaper}.
\end{assumptions}

We next introduce the instantaneous advantage function associated with
the McKean-Vlasov problem.

\begin{definition}[Advantage function]
\label{def:essential-pointwise-kernels}
Let $\pi$ be a feedback randomized policy with law-value function
$\mathcal V^\pi$. For $(\mu,\xi,a)\in
\mathcal P_2(\mathbb R^d)\times\mathbb R^d\times\mathcal A$, define the
\emph{advantage function}
\begin{equation}\label{eq:qess-MV}
   q^\pi(\mu,\xi,a)
   :=
   r(\xi,\mu,a)-\lambda\log p_\pi(a\mid\xi,\mu)
   + b(\xi,\mu,a)\cdot \partial_\mu\mathcal V^\pi(\mu)(\xi)
   + \frac12\,\Sigma(\xi,\mu,a):D_\xi\partial_\mu\mathcal V^\pi(\mu)(\xi)-\beta\mathcal{V}^\pi(\mu).
\end{equation}
\end{definition}
Note that, by \cref{thm:MF_eval_infinite_mkv}, the kernel $q^\pi$ is centered
under the current policy in the sense that
\begin{equation*}
  \int_{\mathbb R^d}\int_{\mathcal A}
    q^\pi(\mu,\xi,a)\,\pi(da\mid \xi,\mu)\,\mu(d\xi)
    =0.
\end{equation*}
Thus, $q^\pi$ measures the infinitesimal gain of deviating from the current randomized policy. This centering property is the continuous-time mean-field
analogue of the usual zero-mean property of advantage functions in policy-gradient
methods.

\begin{remark}[Interpretation of the advantage function]
\label{rem:interpretation-advantage-function}
The function $q^\pi(\mu,\xi,a)$ represents the pointwise first-order contribution of choosing the action $a$ at state $\xi$ when the current population law is $\mu$. It combines the entropy-regularized instantaneous reward with the infinitesimal variation of the law-value function through the drift and diffusion coefficients. Thus, it is the McKean-Vlasov analogue of the continuous-time advantage rate: the usual state-action value function collapses in the continuous-time limit, and the dependence on the action remains only through the infinitesimal generator.

This object is related to, but different from, the q-functions introduced in \cite{Wei2023ContinuousTQ}. In that work, the learning procedure is based on an integrated q-function, defined on the population law and a test policy, together with an essential q-function, defined pointwise in the state and action. The integrated q-function is characterized by a weak martingale condition imposed over test policies in a neighbourhood of the target policy. Hence, its practical computation requires averaging the corresponding martingale loss over nearby test policies. Our approach uses the pointwise action-dependent quantity more directly. Once the current policy has been evaluated, the policy gradient theorem expresses the update direction in terms of $q^\pi(\mu,\xi,a)$ and the score function of the current policy. Therefore, the actor update only requires the evaluated value function, and the pointwise infinitesimal advantage. In particular, it does not require introducing an auxiliary integrated q-function, nor testing over a neighbourhood of policies.
\end{remark}

\begin{definition}[Discounted occupancy measure]\label{def:discounted_ocupacy_MKV}
Let $\pi$ be a fixed policy, let $\mu\in\mathcal P_2(\mathbb R^d)$, and
let $(\mu_t^\pi)_{t\ge0}$ be the McKean-Vlasov flow associated with
\eqref{eq:MKV_only_SDE} starting from $\mu$. The discounted occupancy
measure $\rho_\mu^\pi$ on $\mathcal P_2(\mathbb R^d)\times\mathbb R^d$ is
defined by
\[
\rho_\mu^\pi(B)
:=\beta\,
\int_0^\infty e^{-\beta t}
\int_{\mathbb R^d}
\mathbf 1_{B}(\mu_t^\pi,\xi)\,\mu_t^\pi(d\xi)\,dt,
\qquad
B\in\mathcal B\bigl(\mathcal P_2(\mathbb R^d)\times\mathbb R^d\bigr).
\]
Equivalently, we have 
\[
\rho_\mu^\pi(d\eta,d\xi)
=\beta\,
\int_0^\infty e^{-\beta t}\,
\delta_{\mu_t^\pi}(d\eta)\,
\mu_t^\pi(d\xi)\,dt,
\qquad
(\eta,\xi)\in \mathcal P_2(\mathbb R^d)\times\mathbb R^d .
\]
\end{definition}

We now state the policy gradient formula for the McKean-Vlasov
problem.

\begin{theorem}[Policy gradient for the McKean-Vlasov control]
\label{cor:policy-gradient-mkv}
Fix a baseline policy $\pi\in\Pi_{\mathrm{add}}$. Let $\varphi$ be a signed kernel satisfying
Assumption~\ref{ass:signed_perturbation}. Assume $\mu\in\mathcal{P}_2(\mathbb{R}^d)$ and $\beta>2\bar\beta_\pi$. Define
$\mathcal J(\varepsilon):=\mathcal V^{\pi^\varepsilon}(\mu)$. Then
$\mathcal J$ is differentiable at $\varepsilon=0$, and
\begin{align}\label{eq:gateaux-pg-mkv}
    \frac{d}{d\varepsilon}\mathcal J(\varepsilon)\bigg|_{\varepsilon=0}
=  \frac{1}{\beta}
\mathbb E_{(\eta,\xi)\sim\rho_\mu^\pi}\!\left[
\int_{\mathcal A}
q^\pi(\eta,\xi,a)\,
\varphi(da\mid\xi,\eta)
\right].
\end{align}
\end{theorem}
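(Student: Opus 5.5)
The plan is the standard performance-difference argument. I compare $\mathcal V^{\pi^\varepsilon}$ and $\mathcal V^\pi$ along the flow generated by $\pi^\varepsilon$ and then let $\varepsilon\to0$. Let $(\mu^\varepsilon_t)_{t\ge0}$ denote the McKean--Vlasov flow \eqref{eq:MKV_only_SDE} driven by $\pi^\varepsilon$ and started at $\mu$. By Assumption~\ref{ass:signed_perturbation} and \Cref{thm:MF_eval_infinite_mkv}, $\mathcal V^\pi\in\mathcal C^2_{\mathrm{poly}}$, with the derivative bound \eqref{eq:value-derivative-bound-mkv}.

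\textbf{Step 1: exact performance-difference identity.} Apply the Itô formula along a flow of measures (chain rule for Lions-differentiable functionals, integrated against $\mu^\varepsilon_t$) to $t\mapsto e^{-\beta t}\mathcal V^\pi(\mu^\varepsilon_t)$. This gives
\[
\tfrac{d}{dt}\bigl(e^{-\beta t}\mathcal V^\pi(\mu^\varepsilon_t)\bigr)=e^{-\beta t}\bigl(\mathcal L^{\pi^\varepsilon}_{b,\Sigma}-\beta\bigr)\mathcal V^\pi(\mu^\varepsilon_t).
\]
The uniform moment bound $\sup_{|\varepsilon|\le\varepsilon_0}m_2(\mu^\varepsilon_t)\le Ce^{ct}$ with $c\le\bar\beta_\pi$, combined with the growth bounds and $\beta>\bar\beta_\pi$, makes the boundary term at infinity vanish. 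Integrating on $[0,\infty)$ and adding $\mathcal V^{\pi^\varepsilon}(\mu)=\int_0^\infty e^{-\beta t}r^{\pi^\varepsilon}_\lambda(\mu^\varepsilon_t)\,dt$ yields
\[
\mathcal V^{\pi^\varepsilon}(\mu)-\mathcal V^\pi(\mu)=\int_0^\infty e^{-\beta t}\Bigl[r^{\pi^\varepsilon}_\lambda+(\mathcal L^{\pi^\varepsilon}_{b,\Sigma}-\beta)\mathcal V^\pi\Bigr](\mu^\varepsilon_t)\,dt.
\]
Since $b^{\pi^\varepsilon}$ and $\Sigma^{\pi^\varepsilon}$ are linear in the kernel, the bracket equals
\[
\int\!\!\int\bigl(q^\pi(\eta,\xi,a)+\lambda\log p_\pi-\lambda\log p_{\pi^\varepsilon}\bigr)\,\pi^\varepsilon(da\mid\xi,\eta)\,\eta(d\xi)
\]
evaluated at $\eta=\mu^\varepsilon_t$. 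The entropy term must be treated separately because it is not linear in the kernel.

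\textbf{Step 2: expansion in $\varepsilon$.} Write $\pi^\varepsilon=\pi+\varepsilon\varphi$. By the centering identity of $q^\pi$ (equivalently, by \eqref{eq:HJB-eval-mkv}), the $\pi$-part gives $\int\!\!\int q^\pi\,\pi\,d\eta=0$ for every $\eta$. The $\varphi$-part gives $\varepsilon\int\!\!\int q^\pi\,\varphi\,d\eta$. For the entropy correction, $\log p_\pi-\log p_{\pi^\varepsilon}=-\log(1+\varepsilon\psi)$. Integrating against $(1+\varepsilon\psi)\pi$ gives $-\varepsilon\int\psi\,d\pi+O(\varepsilon^2\|\psi\|_\infty^2)$, and the first-order term vanishes by the zero-mass condition \eqref{eq:signed-kernel-zero-mass}. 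Dividing by $\varepsilon$ therefore gives
\[
\frac{\mathcal J(\varepsilon)-\mathcal J(0)}{\varepsilon}=\int_0^\infty e^{-\beta t}\int\!\!\int q^\pi(\mu^\varepsilon_t,\xi,a)\,\varphi(da\mid\xi,\mu^\varepsilon_t)\,\mu^\varepsilon_t(d\xi)\,dt+O(\varepsilon).
\]
The $O(\varepsilon)$ term is uniform because $|q^\pi|\lesssim 1+|\xi|^2+m_2$ and $\psi\in L^\infty$.

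\textbf{Step 3 (main obstacle): passing to the limit $\mu^\varepsilon_t\to\mu_t^\pi$.} I need continuity of $\varepsilon\mapsto\mu^\varepsilon_t$ in $\mathcal W_2$ for each $t$. This should follow from a synchronous coupling and a Grönwall estimate: the coefficients of $\pi^\varepsilon$ differ from those of $\pi$ by $O(\varepsilon)$, with constants uniform in $\varepsilon$ by Assumption~\ref{ass:signed_perturbation}, so $\mathcal W_2(\mu^\varepsilon_t,\mu^\pi_t)\le C\varepsilon e^{ct}$. Next, the integrand $\eta\mapsto\int\!\!\int q^\pi\varphi\,d\eta$ must be continuous along this convergence, which requires joint continuity of $q^\pi$ and of $\varphi$, i.e.\ the domination and differentiability hypotheses from \cite[Theorem~4.6]{firstpaper}. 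Finally, dominated convergence in $t$ requires an integrable majorant of the form $e^{-\beta t}\bigl(1+m_2(\mu^\varepsilon_t)\bigr)$, possibly multiplied by a factor $e^{\bar\beta_\pi t}$ coming from the $\mathcal W_2$ stability and second-moment growth. The condition $\beta>2\bar\beta_\pi$ is used exactly here to keep this majorant integrable.

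\textbf{Conclusion.} Taking the limit gives
\[
\mathcal J'(0)=\int_0^\infty e^{-\beta t}\int\!\!\int q^\pi\,\varphi\,d\mu^\pi_t\,dt.
\]
Rewritten with Definition~\ref{def:discounted_ocupacy_MKV}, this is $\tfrac1\beta\,\mathbb E_{\rho^\pi_\mu}\bigl[\int q^\pi\varphi\bigr]$, which is \eqref{eq:gateaux-pg-mkv}. Alternatively, Steps 1--3 can be obtained by applying \cite[Theorem~4.6]{firstpaper} and integrating out the representative state, as in Remark~\ref{rem:hjb-from-coupled-system}.
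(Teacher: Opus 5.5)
Your argument is correct, but it follows a different route from the paper.

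\textbf{The paper's route.} The paper never works with the law-level flow $\mu^\varepsilon_t$ directly. It writes $\mathcal V^{\pi^\varepsilon}(\mu)=\int V^{\pi^\varepsilon}(s,\mu)\,\mu(ds)$, where $V^{\pi^\varepsilon}$ is the value of the coupled representative/population problem. It then applies \cite[Theorem~4.6]{firstpaper} with representative initial law $\mu$, so that the coupled occupancy measure transfers to $\rho^\pi_\mu$. Finally, it uses the Lions-derivative formulas for $\mu\mapsto\int V^\pi(s,\mu)\,\mu(ds)$ to identify $q^\pi_{\mathrm{rep}}+\int q^\pi_{\mathrm{pop}}\,d\eta$ with $q^\pi+\beta\bigl(\mathcal V^\pi(\eta)-V^\pi(\xi,\eta)\bigr)$. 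The correction term does not depend on the action, so the zero-mass condition on $\varphi$ kills it. This reduction is short and inherits all the analysis from the cited theorem: the $\varepsilon\to0$ limit, the domination, and the threshold $\beta>2\bar\beta_\pi$. Those hypotheses are exactly what Assumption~\ref{ass:signed_perturbation} imports.

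\textbf{What your route buys and costs.} Your performance-difference argument is self-contained on $\mathcal P_2(\mathbb R^d)$. It only differentiates the unperturbed $\mathcal V^\pi$ along $\mu^\varepsilon_t$, so by \Cref{thm:MF_eval_infinite_mkv} you need no regularity of $\mathcal V^{\pi^\varepsilon}$. It gives an exact finite-$\varepsilon$ identity in which the entropy enters only through the nonpositive term $-\lambda\int\mathrm{KL}\bigl(\pi^\varepsilon(\cdot\mid\xi,\eta)\,\|\,\pi(\cdot\mid\xi,\eta)\bigr)\,\eta(d\xi)=O(\varepsilon^2)$. It also shows directly why $q^\pi$, built with $\log p_\pi$, is the right object. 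The price is that you must prove Step~3 yourself.

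\textbf{Help with Step~3.} The drift and diffusion parts of $\int q^\pi\,d\varphi$ are already continuous by Assumption~\ref{ass:_mkv_regularity} applied to $\pi$ and $\pi^{\varepsilon_0}$. This is because $\int b\,d\varphi=(b^{\pi^{\varepsilon_0}}-b^\pi)/\varepsilon_0$, and likewise for $\Sigma$. Only the reward--entropy part $\int(r-\lambda\log p_\pi)\,d\varphi$ genuinely needs the imported domination hypotheses. You also only need $\mathcal W_2(\mu^\varepsilon_t,\mu^\pi_t)\to0$, not the rate $C\varepsilon e^{ct}$. That rate can degrade to $\sqrt\varepsilon$ through $\sigma^{\pi^\varepsilon}=(\Sigma^{\pi^\varepsilon})^{1/2}$ when $\Sigma^\pi$ degenerates.

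\textbf{A small correction to Step~2.} The $O(\varepsilon)$ remainder there comes solely from the KL term. For small $\varepsilon$ it is bounded by $\lambda\|\psi\|_\infty^2\varepsilon/\beta$, independently of any growth bound on $q^\pi$. The growth of $q^\pi$ matters only for the dominating majorant in Step~3.
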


 The proof of \Cref{cor:policy-gradient-mkv} is given in \Cref{app:HJB-proofs}.
\begin{remark}[Baseline invariance]
\label{rem:baseline-invariance-gateaux}
The identity \eqref{eq:gateaux-pg-mkv} is invariant under the addition of action independent terms to $q^\pi$,
since $\int_{\mathcal A}\varphi(d a\mid \xi,\nu)=0$ for every $(\xi,\nu)$.
\end{remark}
\begin{remark}
\label{rem:policy-gradient-comparison}
\Cref{cor:policy-gradient-mkv} is a key structural ingredient of the
present paper. It gives a policy-gradient formula before a
finite-dimensional actor parametrization is chosen, and it describes the
first-order variation of the objective under arbitrary admissible
perturbations of the randomized policy.

This distinguishes our approach from existing policy-gradient and
q-function-based methods. Compared with \cite{frikhaActorCritic}, where
the gradient is obtained by differentiating the HJB equation with
respect to the policy parameter, our formula expresses the variation
directly in terms of the evaluated value functional. Compared with
\cite{Wei2023ContinuousTQ}, we use the pointwise infinitesimal advantage
$q^\pi(\eta,\xi,a)$ directly in the policy-gradient formula, whereas
their method requires computing integrated q-functions for test policies
in a neighbourhood of the target policy. Consequently, our actor update
does not require solving an additional sensitivity equation or
introducing an auxiliary integrated q-function over nearby policies.
\end{remark}

\subsection{Parametric policy gradient}
\label{subsec:parametric-pg}

The G\^ateaux gradient formula \eqref{eq:gateaux-pg-mkv} expresses the
directional derivative of $\mathcal J$ in terms of an arbitrary signed
kernel $\varphi$. In practice, policies are often parametrized by a
finite-dimensional vector $\omega\in\mathbb R^p$, and one wishes to
compute $\nabla_\omega \mathcal J(\omega)$ directly. The parametric
policy gradient theorem below shows that the gradient can be written as
a weighted expectation of the advantage function against the score
function. To state the result precisely, we require the following two
assumptions on the parametric family.
\medbreak

\begin{assumptions}\label{ass:A2}
Let $\{\pi_\omega\}_{\omega\in\mathbb R^p}\subset\Pi_{\mathrm{add}}$ be a
parametric family of randomized feedback policies. For $\nu$-a.e.
$a\in\mathcal A$ and every $(s,\mu)$, the map
$\omega\mapsto\log p_{\pi_\omega}(a\mid s,\mu)$ is continuously
differentiable. Moreover, for every compact $K\subset\mathbb R^p$, the maps $\omega\mapsto b^{\pi_\omega}(s,\mu),\,
\omega\mapsto\Sigma^{\pi_\omega}(s,\mu),$ and $
\omega\mapsto\ell_\lambda^{\pi_\omega}(s,\mu)$ are continuously differentiable, and there exist $C_K>0$ and
$g_K(\cdot,s,\mu)\in L^1(\mathcal A,\nu)$ such that, for all $\omega\in K$,
\begin{align*}
\|\nabla_\omega p_{\pi_\omega}(a\mid s,\mu)\|
&\le g_K(a,s,\mu),\\
\|\nabla_\omega b^{\pi_\omega}(s,\mu)\|
&\le C_K\bigl(1+|s|+\sqrt{m_2(\mu)}\bigr),\\
\|\nabla_\omega\Sigma^{\pi_\omega}(s,\mu)\|
+\|\nabla_\omega\ell_\lambda^{\pi_\omega}(s,\mu)\|
&\le C_K\bigl(1+|s|^2+m_2(\mu)\bigr).
\end{align*}
Finally, Assumption~\ref{ass:_mkv_regularity} holds for $\pi_\omega$ with
constants uniform in $\omega\in K$.
\end{assumptions}

\begin{remark}[Gaussian policies and assumptions]
\label{rem:gaussian-policy}
For Gaussian policies of the form
\eqref{eq:genera_gaussina_parametrization}, Assumptions
\ref{ass:_mkv_regularity} and \ref{ass:A2} follow, for instance, under
the bounded-coefficient conditions stated above when $f_\omega$ has
linear growth and bounded first- and second-order spatial and Lions
derivatives, locally uniformly in $\omega$. Smooth cylindrical
parametrizations on compact state and parameter sets provide a standard
example; see \cite[Section~2.2]{pham2022meanfield}. Affine Gaussian means
on $\mathbb R^d$ fall outside these globally bounded sufficient conditions
and are verified separately in the LQR section by a direct
polynomial-growth calculation.
\end{remark}

\begin{theorem}[Parametric policy gradient]
\label{thm:parametric-policy-gradient}
Let $\{\pi_\omega\}_{\omega\in\mathbb R^p}$ be a family of feedback
randomized policies satisfying Assumption~\ref{ass:A2}. Fix
$\omega\in\mathbb R^p$, set $\pi:=\pi_\omega$, and fix
$\mu\in\mathcal P_2(\mathbb R^d)$. Define $\mathcal J(\omega):=\mathcal V^{\pi_\omega}(\mu).$ Assume $\beta>2\bar\beta_\pi$. Then the map
$\omega\mapsto \mathcal J(\omega)$ is differentiable at $\omega$, and
\begin{equation}\label{eq:parametric-pg}
\begin{aligned}
\nabla_\omega \mathcal J(\omega)
&= \frac{1}{\beta}
\mathbb E_{(\eta,\xi)\sim\rho_\mu^\pi}\!\left[
\int_{\mathcal A}
q^\pi(\eta,\xi,a)\,
\nabla_\omega \log p_\pi(a\mid\xi,\eta)\,
\pi(da\mid\xi,\eta)
\right].
\end{aligned}
\end{equation}
Equivalently,
\begin{equation*}
\begin{aligned}
\nabla_\omega \mathcal J(\omega)
&=\frac{1}{\beta}
\mathbb E_{\substack{(\eta,\xi)\sim\rho_\mu^\pi\\ a\sim\pi(\cdot\mid\xi,\eta)}}\!\Bigl[
q^\pi(\eta,\xi,a)\,
\nabla_\omega \log p_\pi(a\mid\xi,\eta)
\Bigr],
\end{aligned}
\end{equation*}
where $\rho_\mu^\pi$ is the discounted occupancy measure associated with
$\pi$ and the initial law $\mu$, and $q^\pi$ is the advantage function
introduced in \Cref{def:essential-pointwise-kernels}.
\end{theorem}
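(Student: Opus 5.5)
The plan is to deduce \Cref{thm:parametric-policy-gradient} from the Gâteaux formula of \Cref{cor:policy-gradient-mkv}, applied one coordinate direction at a time, and then upgrade directional differentiability to differentiability. Fix a unit vector $e\in\mathbb R^p$ and consider $\varepsilon\mapsto\pi_{\omega+\varepsilon e}$. The difficulty is that this curve is not exactly of the affine form \eqref{eq:epsilon-perturbed-policy}. Instead of forcing that form, I will reprove the mechanism behind \Cref{cor:policy-gradient-mkv} directly along a $C^1$ curve of policies.

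\textbf{Difference identity.} Write $\pi':=\pi_{\omega+\varepsilon e}$. Let $\mu_t'$ be the MKV flow driven by $\pi'$ from $\mu$, and apply the chain rule along flows of measures (Itô's formula on $\mathcal P_2(\mathbb R^d)$) to $e^{-\beta t}\mathcal V^{\pi}(\mu'_t)$. This is legitimate because $\mathcal V^\pi\in\mathcal C^2_{\mathrm{poly}}$ by \Cref{thm:MF_eval_infinite_mkv}, the bounds \eqref{eq:value-derivative-bound-mkv} hold, and second moments grow at most like $e^{\bar\beta t}$. Letting $t\to\infty$ requires $\beta>2\bar\beta_\pi$ to kill boundary terms. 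Using \eqref{eq:HJB-eval-mkv} for $\pi$, we obtain the performance-difference identity
\[
\mathcal V^{\pi'}(\mu)-\mathcal V^{\pi}(\mu)=\int_0^\infty e^{-\beta t}\Bigl[(\mathcal L^{\pi'}_{b,\Sigma}-\mathcal L^{\pi}_{b,\Sigma})\mathcal V^\pi(\mu'_t)+r^{\pi'}_\lambda(\mu'_t)-r^\pi_\lambda(\mu'_t)\Bigr]dt.
\]
The bracket equals $\int\!\int \bigl[q^\pi(\mu'_t,\xi,a)+\lambda\log p_\pi-\lambda\log p_{\pi'}\bigr](\pi'-\pi)(da\mid\xi,\mu'_t)\,\mu'_t(d\xi)$ plus an entropy cross term. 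More precisely, the reward difference is $\int (r-\lambda\log p_{\pi'})d\pi'-\int(r-\lambda\log p_\pi)d\pi$, which I will split as $\int(r-\lambda\log p_\pi)d(\pi'-\pi)-\lambda\,\mathrm{KL}(\pi'\|\pi)$. The action-independent term $-\beta\mathcal V^\pi$ inside $q^\pi$ integrates to zero against $\pi'-\pi$. Therefore
\[
\mathcal V^{\pi'}(\mu)-\mathcal V^{\pi}(\mu)=\int_0^\infty e^{-\beta t}\int\Bigl[\int_{\mathcal A} q^\pi(\mu'_t,\xi,a)(\pi'-\pi)(da\mid\xi,\mu'_t)-\lambda\mathrm{KL}\bigl(\pi'(\cdot\mid\xi,\mu'_t)\|\pi(\cdot\mid\xi,\mu'_t)\bigr)\Bigr]\mu'_t(d\xi)\,dt.
\]

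\textbf{Limit $\varepsilon\to0$.} Divide by $\varepsilon$. By \Cref{ass:A2} and dominated convergence with dominating function $g_K$, we have $(\pi'-\pi)/\varepsilon\to e\cdot\nabla_\omega p_\pi\,\nu(da)=e\cdot\nabla_\omega\log p_\pi\,\pi(da)$. The KL term is $O(\varepsilon^2)$ pointwise, with polynomial-moment domination. The bracket itself can be written without the log-density as $(b^{\pi'}-b^\pi)\cdot\partial_\mu\mathcal V^\pi+\tfrac12(\Sigma^{\pi'}-\Sigma^\pi):D_\xi\partial_\mu\mathcal V^\pi+\ell^{\pi'}_\lambda-\ell^\pi_\lambda$. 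Divided by $\varepsilon$, this is bounded by $C_K(1+|\xi|^2+m_2)^2$-type polynomial growth, using the gradient bounds in \Cref{ass:A2} together with \eqref{eq:value-derivative-bound-mkv}. We also need $\mu'_t\to\mu_t$ in $\mathcal W_2$ for each $t$, which holds by stability of MKV SDEs with coefficients Lipschitz uniformly in $\omega\in K$. Joint continuity of $\partial_\mu\mathcal V^\pi$, $D_\xi\partial_\mu\mathcal V^\pi$ and of the coefficients then gives pointwise convergence of the integrand. Uniform moment bounds $\sup_{\omega\in K}m_2(\mu'_t)\le Ce^{\bar\beta t}$, together with $\beta>2\bar\beta_\pi$, give integrable domination in $t$. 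Dominated convergence then yields $e\cdot\nabla_\omega\mathcal J$ equal to the right-hand side of \eqref{eq:parametric-pg} in direction $e$, rewritten through the occupancy measure $\rho^\pi_\mu$ of \Cref{def:discounted_ocupacy_MKV}; the prefactor $1/\beta$ compensates the $\beta$ in $\rho$.

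\textbf{Full differentiability and main obstacle.} The same argument with $\omega$ varying in a compact $K$ shows that the directional derivatives are continuous in $\omega$. Continuity of partial derivatives then gives Fréchet differentiability, and the second formula is just Fubini. The main obstacle is the domination step: showing that the quotient of the bracket is controlled uniformly in $\varepsilon$ by a function integrable against $e^{-\beta t}\mu'_t(d\xi)dt$. This needs the fourth-moment growth of the MKV flow. If the moment growth rate of the squared polynomial weight is exactly what forces the factor $2$ in $\beta>2\bar\beta_\pi$, I would handle it by routing the bound through Cauchy--Schwarz in $(t,\xi)$.
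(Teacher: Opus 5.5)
Your route differs from the paper's. The paper's proof contains no limiting argument of its own. It writes $\mathcal J(\omega)=\int V^{\pi_\omega}(s,\mu)\,\mu(ds)$ with the coupled representative--population value function and imports differentiability and the gradient formula from \cite[Theorem~4.9]{firstpaper}. It then transfers the occupancy measure by \eqref{eq:rho-transfer-mkv}. Finally, it uses \eqref{eq:mkv-value-lions-derivative}--\eqref{eq:mkv-value-lions-second-derivative} to show that $q^\pi_{\mathrm{rep}}+\int q^\pi_{\mathrm{pop}}\,d\eta$ equals $q^\pi$ up to the action-independent term $\beta(\mathcal V^\pi(\eta)-V^\pi(\xi,\eta))$, which the zero-mean score removes. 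Your law-level performance-difference identity bypasses the coupled problem and the disintegration, and the identity itself is correct. This includes the split $\int(r-\lambda\log p_\pi)\,d(\pi'-\pi)-\lambda\,\mathrm{KL}(\pi'\|\pi)$ and the removal of $-\beta\mathcal V^\pi$. It is self-contained and makes the role of the hypotheses visible, but you then owe the limiting argument, and two steps of it do not go through as written.

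\textbf{Domination.} You use \eqref{eq:value-derivative-bound-mkv}. Its bound $C(1+|x|^2+m_2(\nu))$ on $D_x\partial_\mu\mathcal V^\pi$, multiplied by $\|\nabla_\omega\Sigma^{\pi_\omega}\|\le C_K(1+|x|^2+m_2)$, is quartic in $\xi$, so you would need $\int|\xi|^4\mu'_t(d\xi)<\infty$. Since $\mu$ is only in $\mathcal P_2(\mathbb R^d)$, this can be infinite for every $t$, and no Cauchy--Schwarz in $(t,\xi)$ lowers the moment order.

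Use instead the $\mathcal C^2_{\mathrm{poly}}$ bounds that \Cref{thm:MF_eval_infinite_mkv} also provides: $|\partial_\mu\mathcal V^\pi(\nu)(\xi)|\le C(1+|\xi|+m_2(\nu))$ and $\|D_\xi\partial_\mu\mathcal V^\pi(\nu)(\xi)\|\le C(1+m_2(\nu))$. With the mean value theorem and \Cref{ass:A2}, the $\varepsilon$-quotient of your log-free bracket is then at most $C(1+|\xi|^2+m_2(\nu))(1+m_2(\nu))$. Its $\mu'_t$-integral is at most $C(1+m_2(\mu'_t))^2\le Ce^{2\beta_0(2)t}(1+m_2(\mu))^2$, uniformly near $\omega$. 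This product of two second-moment weights is where $\beta>2\bar\beta_\pi\ge2\beta_0(2)$ enters; the boundary term only needs $\beta>\beta_0(2)$.

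\textbf{Differentiability.} Continuity of $\omega\mapsto\partial_e\mathcal J(\omega)$ requires continuity in $\omega$ of $\partial_\mu\mathcal V^{\pi_\omega}$ and $D_\xi\partial_\mu\mathcal V^{\pi_\omega}$. That is a stability property of the HJB solution which ``the same argument'' does not give, because your argument only uses the value function at the base point. It is also unnecessary. Run the identity with an arbitrary increment $h\to0$ and write $b^{\pi_{\omega+h}}-b^{\pi_\omega}=\int_0^1\nabla_\omega b^{\pi_{\omega+\theta h}}\,d\theta\,h$, and likewise for $\Sigma$ and $\ell_\lambda$. The same domination then gives $\mathcal J(\omega+h)-\mathcal J(\omega)-\langle G,h\rangle=o(|h|)$ directly, with $G$ the right-hand side of \eqref{eq:parametric-pg}.

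You should also state two hypotheses you use implicitly.
\begin{itemize}
\item The pointwise limit is taken against the moving law $\mu'_t$. A synchronous coupling handles this, but it needs $\nabla_\omega b^{\pi_\omega}(s,\nu)$ and its $\Sigma,\ell_\lambda$ analogues to be jointly continuous in $(\omega,s,\nu)$.
\item Converting the log-free limit into score form needs $\nabla_\omega b^\pi=\int b\,\nabla_\omega p_\pi\,d\nu$ and $\nabla_\omega\ell^\pi_\lambda=\int(r-\lambda\log p_\pi)\nabla_\omega p_\pi\,d\nu$. The single bound $\|\nabla_\omega p_{\pi_\omega}\|\le g_K$ justifies these only when $b,\Sigma,r,\log p_{\pi_\omega}$ are integrable against $g_K$. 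This is not automatic: for Gaussian policies $\log p$ is quadratic in $a$.
\end{itemize}
The same caveat applies to your claim that $(\pi'-\pi)/\varepsilon\to e\cdot\nabla_\omega p_\pi\,\nu(da)$ when tested against the unbounded $q^\pi$, and to your claim that the KL term is $O(\varepsilon^2)$. The paper inherits all of this from \cite[Theorem~4.9]{firstpaper}; a self-contained proof must state it.
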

 The proof of \Cref{thm:parametric-policy-gradient} is given in \Cref{app:HJB-proofs}.

\begin{remark}
\label{rem:parametric-policy-gradient-comparison}
The identity \eqref{eq:parametric-pg} has the classical policy-gradient form ``advantage times score''. Once the current policy has been evaluated, all the quantities entering the formula are obtained from the value functional $\mathcal V^\pi$, its Lions derivatives, the pointwise coefficients in the generator, and the score function $\nabla_\omega\log p_{\pi_\omega}$. Therefore, the actor direction can be approximated by sampling the discounted occupancy measure under the current policy and drawing actions from $\pi_\omega(\cdot\mid\xi,\eta)$.

This should be contrasted with the policy-gradient representation in \cite{frikhaActorCritic}. There, the gradient is obtained by differentiating the HJB equation with respect to the policy parameter. Consequently, the gradient is characterized through an additional linear equation whose source term depends on the value function under the current policy. Thus, computing the ascent direction requires solving an extra sensitivity equation, or equivalently an additional policy-evaluation-type problem. By contrast, \eqref{eq:parametric-pg} gives the actor direction explicitly once $\mathcal V^\pi$ and its derivatives are available.

Moreover, the differentiated HJB approach involves terms depending on $\nabla_\omega b^{\pi_\omega}$ and $\nabla_\omega\Sigma^{\pi_\omega}$. These terms require differentiating the averaged coefficients with respect to the policy parameters, which is restrictive when the dynamics are unknown or only accessible through data. In contrast, \eqref{eq:parametric-pg} is written directly in terms of the pointwise infinitesimal advantage $q^\pi$ and the score of the policy. This avoids differentiating the aggregated drift and diffusion with respect to $\omega$, and is therefore better aligned with the MF-PhiBE formulation developed below.
\end{remark}


\section{Mean-field PhiBE}
\label{sec:MF-phibe}

In this section we introduce a physics-informed Bellman equation for the
McKean-Vlasov control problem formulated in \Cref{sec:formulation}. The
linear HJB equation \eqref{eq:HJB-eval-mkv} provides a continuous-time
characterization of the value associated with a fixed policy. Its
generator \eqref{eq:L-pi-mkv}, however, depends on the averaged drift
$b^\pi$ and diffusion matrix $\Sigma^\pi$. Therefore, a direct use of
\eqref{eq:HJB-eval-mkv} requires knowledge of the coefficients $b$ and
$\sigma$. This requirement is restrictive when the dynamics are not known
explicitly. In many applications, one only has access to short-time
observations of the system, for instance
\[
\mathcal D
=
\bigl\{
(s_\ell,\mu_\ell,a_\ell,s_\ell',r_\ell)
\bigr\}_{\ell=1}^{N_D},
\]
where $s_\ell$ is the observed state, $\mu_\ell$ is the corresponding
population law, $a_\ell$ is the applied action, $s_\ell'$ is the state
observed after a time step $\Delta t$, and $r_\ell$ is the observed
running reward.

Our goal is to construct a surrogate of \eqref{eq:HJB-eval-mkv}
that preserves the continuous-time structure of the law-space HJB
equation while using the discrete-time data in $\mathcal D$. The resulting equation is called the mean-field Physics-Informed Bellman Equation (MF-PhiBE). In this section, we introduce the MF-PhiBE and its optimal counterpart.
We then prove that, if $\hat\pi^*$ is an optimizer of the optimal
MF-PhiBE, then the true value $\mathcal V^{\hat\pi^*}$ approximates the
optimal value function $\mathcal V^*$ with first-order accuracy in
$\Delta t$.

\subsection{Definition of MF-PhiBE}
\label{subsec:def-phibe}

We now introduce a data-driven approximation of the policy-evaluation
equation \eqref{eq:HJB-eval-mkv}. The guiding principle is to preserve
the continuous-time law-space structure of the HJB equation, while replacing
the unknown local characteristics in the generator by one-step quantities
that can be estimated from discrete-time observations.

Fix $\Delta t>0$. For $(s,\mu,a)$, let
$(\mu_\tau^a)_{\tau\in[0,\Delta t]}$ be the law flow of the frozen-action
MKV equation with coefficients $b(\cdot,\cdot,a)$ and
$\sigma(\cdot,\cdot,a)$, initialized at $\mu$, and let
$(s_\tau^a)_{\tau\in[0,\Delta t]}$ be the corresponding decoupled state
process initialized at $s$ and evolving along $(\mu_\tau^a)$; the first
two increment moments define
\begin{align}
\hat b(s,\mu,a)
&:=
\mathbb E\!\left[
\frac{s_{\Delta t}-s}{\Delta t}
\,\Big|\,
s_0=s,\ \mu_0=\mu,\, a_\tau=a,\quad\tau\in[0,\Delta t]
\right],
\label{eq:estimated-drift-policy-mkv22}
\\[4pt]
\hat \Sigma(s,\mu,a)
&:=
\mathbb E\!\left[
\frac{(s_{\Delta t}-s)(s_{\Delta t}-s)^\top}{\Delta t}
\,\Big|\,
s_0=s,\ \mu_0=\mu,\, a_\tau=a,\quad\tau\in[0,\Delta t]
\right].
\label{eq:estimated-covariance-policy-mkv2}
\end{align}

These quantities are the natural discrete-time proxies for the drift and
diffusion matrix appearing in the exact generator
\eqref{eq:L-pi-mkv}. For $\pi\in\Pi_{\mathrm{add}}$, define the corresponding averaged
estimated coefficients by
\begin{align}\label{eq:averaged_b_sigma_hat}
    \hat b^\pi(s,\mu)
  :=
  \int_{\mathcal A}\hat b(s,\mu,a)\,\pi(da\mid s,\mu),
  \qquad
  \hat\Sigma^\pi(s,\mu)
  :=
  \int_{\mathcal A}\hat\Sigma(s,\mu,a)\,\pi(da\mid s,\mu).
\end{align}

Accordingly, for a smooth functional
$\Phi:\mathcal P_2(\mathbb R^d)\to\mathbb R$, we define the estimated
generator by
\begin{equation}
\label{eq:estimated-generator-mkv}
\begin{aligned}
(\mathcal L_{\hat b,\hat\Sigma}^\pi \Phi)(\mu)
:=
\int_{\mathbb R^d}
\Bigl[
\hat b^\pi(\xi,\mu)\cdot \partial_\mu \Phi(\mu)(\xi)
+\frac12\,\hat\Sigma^\pi(\xi,\mu):D_\xi\partial_\mu\Phi(\mu)(\xi)
\Bigr]\mu(d\xi).
\end{aligned}
\end{equation}
This operator has exactly the same form as
$\mathcal L_{b,\Sigma}^\pi$ in \eqref{eq:L-pi-mkv}; only the unknown
coefficients are replaced by their one-step estimators.

We can now define the mean-field Physics-Informed Bellman Equation.

\begin{definition}[MF-PhiBE and optimal MF-PhiBE]
\label{def:MF-PhiBE-mkv}
For a fixed policy $\pi\in\Pi_{\mathrm{add}}$, the MF-PhiBE is
\begin{equation}
\label{eq:MF-PhiBE-evaluation}
(\mathcal L_{\hat b,\hat\Sigma}^\pi-\beta)\hat{\mathcal V}^\pi(\mu)
+r_\lambda^\pi(\mu)
=
0,
\qquad
\mu\in\mathcal P_2(\mathbb R^d).
\end{equation}
Whenever \eqref{eq:MF-PhiBE-evaluation} admits a unique solution in
$\mathcal C^2_{\mathrm{poly}}(\mathcal P_2(\mathbb R^d))$, we denote
this solution by $\hat{\mathcal V}^\pi$.

We define the optimal MF-PhiBE value by
\begin{equation}
\label{eq:MF-PhiBE-optimal}
\hat{\mathcal V}^*(\mu)
:=
\sup_{\pi\in\Pi_{\mathrm{add}}}
\hat{\mathcal V}^\pi(\mu),
\qquad
\mu\in\mathcal P_2(\mathbb R^d),
\end{equation}
where the supremum is taken over the policies for which
\eqref{eq:MF-PhiBE-evaluation} is well posed. If there exists
$\hat\pi^*\in\Pi_{\mathrm{add}}$ such that $\hat{\mathcal V}^*(\mu)
=
\hat{\mathcal V}^{\hat\pi^*}(\mu),$ for every $\mu\in\mathcal P_2(\mathbb R^d),$ then $\hat\pi^*$ is called an optimal MF-PhiBE policy.
\end{definition}

\begin{remark}
    Equation \eqref{eq:MF-PhiBE-evaluation} should be understood as a
generator-level approximation of \eqref{eq:HJB-eval-mkv}. In
particular, the approximation does not replace the continuous-time control
problem by a discrete-time Bellman recursion. Instead, it keeps the
stationary HJB structure on $\mathcal P_2(\mathbb R^d)$ and only substitutes
the unknown infinitesimal coefficients by quantities accessible from
discrete-time data. This is precisely the feature that makes MF-PhiBE
suitable for the present continuous-time mean-field setting.
\end{remark}

\subsection{Error analysis}
\label{subsec:wellposedness-phibe}

To establish the well-posedness of MF-PhiBE and its first-order accuracy, we consider the following assumptions. 
\begin{assumptions}[Coefficients regularity]
\label{ass:MF-PhiBE-structure}
There exist constants $K>0$ and $\alpha>0$ such that, for every
$a\in\mathcal A$, the coefficients $b(\cdot,\cdot,a)$ and
$\sigma(\cdot,\cdot,a)$ satisfy
\[
|b(s,\mu,a)-b(s',\mu',a)|
+
\|\sigma(s,\mu,a)-\sigma(s',\mu',a)\|
\le
K\bigl(|s-s'|+\mathcal W_2(\mu,\mu')\bigr),
\]
and
\[
|b(s,\mu,a)|+\|\sigma(s,\mu,a)\|
\le
K,
\qquad
\Sigma(s,\mu,a):=\sigma(s,\mu,a)\sigma(s,\mu,a)^\top
\succeq \alpha I_d .
\]
Moreover, for every component
$h\in\{b_i,\sigma_{i\ell}:i=1,\dots,d,\ \ell=1,\dots,n\}$, all spatial
and Lions derivatives of $(s,\mu)\mapsto h(s,\mu,a)$ up to order four
exist, are continuous, and are bounded uniformly with respect to
$a\in\mathcal A$.
\end{assumptions}

\begin{remark}[Compact domains and the torus setting]\label{remark:compact_domains}
Assumption~\ref{ass:MF-PhiBE-structure} is stated on $\mathbb{R}^d$ with globally bounded coefficients and globally bounded derivatives. This is imposed in order to isolate the consistency error of the one-step MF-PhiBE estimators from localization and moment-growth issues. If the state space is a compact smooth domain, and in particular if the state space is the flat torus $\mathbb{T}^d$, the same assumption can be read with $\mathbb{R}^d$ replaced by $\mathbb{T}^d$ and $\mathcal{P}_2(\mathbb{R}^d)$ replaced by $\mathcal{P}(\mathbb{T}^d)$. The Wasserstein distance is then induced by the periodic geodesic distance, and all spatial derivatives are interpreted as periodic derivatives. In the torus case, smooth coefficients and smooth randomized policy densities automatically have bounded spatial derivatives, and the moment-growth terms appearing in the unbounded Euclidean formulation disappear because $\mathbb{T}^d$ is compact. Thus the coefficient estimates and the averaged-coefficient estimates apply without additional moment assumptions. For unbounded state spaces, Assumption~\ref{ass:MF-PhiBE-structure} can be replaced by a localized or problem-specific growth condition, provided one has the corresponding moment estimates for the controlled McKean-Vlasov flow. The LQR analysis in Section~\ref{sec:LQR} is treated separately for this reason: its drift is not globally bounded, but the Riccati structure provides the required stability and error estimates directly.
\end{remark}

The following assumption concerns the optimal policies. 

\begin{assumptions}[Uniform regularity of the relevant policies]
\label{ass:optimal-policy-regularity}
Let $\Pi_*\subset\Pi_{\mathrm{add}}$ be a family of policies. For every
$\pi\in\Pi_*$, assume
$\pi(da\mid s,\mu)=p_\pi(a\mid s,\mu)\nu(da)$ with $p_\pi>0$
$\nu$-a.e., and assume that
$(s,\mu)\mapsto p_\pi(a\mid s,\mu)$ belongs to
$\mathcal C^{2,2}(\mathbb R^d\times\mathcal P_2(\mathbb R^d))$.
There exists a single function $\ell_*\in L^1(\nu)$ such that, uniformly
in $\pi\in\Pi_*$,
\[
|p_\pi|
+\|\nabla_s p_\pi\|
+\|D_{ss}^2p_\pi\|
+|\partial_\mu p_\pi(\xi)|
+\|D_\xi\partial_\mu p_\pi(\xi)\|
\le \ell_*(a).
\]
The same quantities are locally Lipschitz in $(s,\mu,\xi)$ with local
constants dominated by a single $L^1(\nu)$ function, uniformly in
$\pi\in\Pi_*$. Finally,
$\ell_\lambda^\pi\in
\mathcal C^{2,2}_{\mathrm{poly}}
(\mathbb R^d\times\mathcal P_2(\mathbb R^d))$
with polynomial-growth constants uniform in $\pi\in\Pi_*$.
\end{assumptions}

\begin{remark}\label{rem:policy-regularity-approximation}
    On a compact state domain, or for Gaussian policies with fixed uniformly elliptic covariance and globally bounded $\mathcal C^{2,2}$ mean, the domination condition in Assumption~\ref{ass:optimal-policy-regularity} is satisfied. For affine Gaussian means on the whole space $\mathbb{R}^d$, this domination is generally too strong; such policies are better treated either by localization, truncation, or by a problem-specific argument such as the LQR analysis in Section~\ref{sec:LQR}.
\end{remark}

We now state the main result of this section. It gives the first-order
accuracy of MF-PhiBE both at the level of policy evaluation and at the
level of policy improvement.

\begin{theorem}[Optimal MF-PhiBE error]
\label{thm:error_V_eval_optimal_pure_mkv}
Fix $\beta>0$ and assume Assumption~\ref{ass:MF-PhiBE-structure}. Suppose that there exists $\Delta_0\in(0,1]$ such that the exact optimizer $\pi^*$ and an MF-PhiBE optimizer $\hat\pi_{\Delta t}^*$ exist for every $\Delta t\in(0,\Delta_0]$, and that Assumption~\ref{ass:optimal-policy-regularity} holds for $\Pi_*:=\{\pi^*\}\cup \{\hat\pi_{\Delta t}^*:0<\Delta t\le\Delta_0\}.$ Then there exist $\bar\beta_*>0$ and $C>0$, independent of $\Delta t$, such that, after decreasing $\Delta_0$ if necessary, if $\beta>\max\{\bar\beta_*,\beta_0(2)\}$, then for every $\Delta t\in(0,\Delta_0]$ and $\mu\in\mathcal P_2(\mathbb R^d)$,
\begin{equation}
\label{eq:error_V_optimal_pure_mkv}
0\le
\mathcal V^*(\mu)-\mathcal V^{\hat\pi_{\Delta t}^*}(\mu)
\le
\frac{C(1+m_2(\mu))}
{(\beta-\bar\beta_*)(\beta-\beta_0(2))}\,\Delta t.
\end{equation}
Here $\beta_0(2)$ is chosen uniformly over $\Pi_*$ in the corresponding
second-moment estimate.
\end{theorem}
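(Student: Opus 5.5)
The plan is the standard three-term decomposition used for optimal PhiBE:
\[
\mathcal V^*(\mu)-\mathcal V^{\hat\pi^*_{\Delta t}}(\mu)
=\bigl[\mathcal V^{\pi^*}-\hat{\mathcal V}^{\pi^*}\bigr](\mu)
+\bigl[\hat{\mathcal V}^{\pi^*}-\hat{\mathcal V}^{\hat\pi^*_{\Delta t}}\bigr](\mu)
+\bigl[\hat{\mathcal V}^{\hat\pi^*_{\Delta t}}-\mathcal V^{\hat\pi^*_{\Delta t}}\bigr](\mu).
\]
The lower bound $0\le\cdot$ is immediate from the definition of $\mathcal V^*$ in \eqref{eq:optimal_CP_mkv}. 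The middle bracket is $\le 0$ by optimality of $\hat\pi^*_{\Delta t}$ in \eqref{eq:MF-PhiBE-optimal}. It therefore suffices to prove a \emph{uniform policy-evaluation estimate}: for every $\pi\in\Pi_*$,
\[
|\mathcal V^\pi(\mu)-\hat{\mathcal V}^\pi(\mu)|\le \frac{C(1+m_2(\mu))}{(\beta-\bar\beta_*)(\beta-\beta_0(2))}\Delta t,
\]
with $C$ independent of $\pi$ and $\Delta t$.

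\textbf{Step 1 (coefficient consistency).} Under Assumption~\ref{ass:MF-PhiBE-structure}, apply Itô's formula, together with the chain rule along the frozen-action law flow $\mu^a_\tau$, to $s^a_\tau$ and to $(s^a_\tau-s)(s^a_\tau-s)^\top$ on $[0,\Delta t]$. This expansion gives
\[
\hat b(s,\mu,a)=b(s,\mu,a)+\frac1{\Delta t}\int_0^{\Delta t}\mathbb E[b(s^a_\tau,\mu^a_\tau,a)-b(s,\mu,a)]\,d\tau,
\]
and a second application of Itô (now to $b(s^a_\tau,\mu^a_\tau,a)$, using bounded spatial and Lions derivatives up to order two) bounds the remainder by $C\Delta t$. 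Similarly,
\[
\hat\Sigma=\Sigma+\Delta t\,bb^\top+\frac1{\Delta t}\int_0^{\Delta t}\mathbb E[\Sigma(s^a_\tau,\cdot)-\Sigma(s,\cdot)]\,d\tau+\text{cross terms},
\]
and this is $\Sigma+O(\Delta t)$ uniformly in $(s,\mu,a)$ by boundedness. Integrating against $\pi$ gives $|\hat b^\pi-b^\pi|+\|\hat\Sigma^\pi-\Sigma^\pi\|\le C\Delta t$ uniformly over $\Pi_*$. Boundedness of $\hat b,\hat\Sigma$ and ellipticity of $\hat\Sigma$ (for small $\Delta t$, since $\hat\Sigma\succeq\alpha I-C\Delta t$) also persist. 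To apply Theorem~\ref{thm:MF_eval_infinite_mkv} to the "hat" dynamics, we need that $\hat b^\pi,(\hat\Sigma^\pi)^{1/2}$ satisfy Assumption~\ref{ass:_mkv_regularity} with constants uniform in $\Delta t$ and $\pi\in\Pi_*$. For this, differentiate the representations above in $(s,\mu)$; this is where derivatives up to order four of $b,\sigma$ are used, together with the domination by $\ell_*$ from Assumption~\ref{ass:optimal-policy-regularity} for the $a$-integration. The square root is smooth thanks to uniform ellipticity. We then obtain well-posedness of \eqref{eq:MF-PhiBE-evaluation}, with $\hat{\mathcal V}^\pi$ the value of a modified MKV flow, and a uniform threshold $\bar\beta_*$ for $\beta$.

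\textbf{Step 2 (evaluation error via the HJB).} Subtract \eqref{eq:MF-PhiBE-evaluation} from \eqref{eq:HJB-eval-mkv}:
\[
(\mathcal L^\pi_{b,\Sigma}-\beta)(\mathcal V^\pi-\hat{\mathcal V}^\pi)=-(\mathcal L^\pi_{b,\Sigma}-\mathcal L^\pi_{\hat b,\hat\Sigma})\hat{\mathcal V}^\pi=:-g(\mu).
\]
By Feynman–Kac along the true flow $\mu^\pi_t$ (justified by the $\mathcal C^2_{\mathrm{poly}}$ class, as in the uniqueness part of Theorem~\ref{thm:MF_eval_infinite_mkv}), we get $\mathcal V^\pi-\hat{\mathcal V}^\pi=\int_0^\infty e^{-\beta t}g(\mu^\pi_t)\,dt$. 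Step 1 and the derivative bound \eqref{eq:value-derivative-bound-mkv} applied to $\hat{\mathcal V}^\pi$ yield
\[
|g(\eta)|\le C\Delta t\int\frac{1+|\xi|^2+m_2(\eta)}{\beta-\bar\beta_*}\,\eta(d\xi)\le\frac{C\Delta t(1+m_2(\eta))}{\beta-\bar\beta_*}.
\]
The second-moment estimate $m_2(\mu^\pi_t)\le C(1+m_2(\mu))e^{\beta_0(2)t}$, uniform over $\Pi_*$ (bounded coefficients), then gives the factor $(\beta-\beta_0(2))^{-1}$ after integration in $t$. Applying this to $\pi^*$ and to $\hat\pi^*_{\Delta t}$ and summing the two outer brackets concludes the proof.

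\textbf{Main obstacle.} The delicate part is the uniformity in Step 1: proving that $\hat b^\pi,\hat\Sigma^\pi$ have second spatial and Lions derivatives bounded and locally Lipschitz uniformly in $\Delta t$ and $\pi\in\Pi_*$. Only then do the constants $C_\pi,\bar\beta_\pi$ in \eqref{eq:value-derivative-bound-mkv} for $\hat{\mathcal V}^{\hat\pi^*_{\Delta t}}$ not blow up as $\Delta t\to0$. My first approach would be to differentiate $(s,\mu)\mapsto\mathbb E[s^a_{\Delta t}]$ through the first- and second-order variation processes of the frozen-action MKV system (tangent processes for both the state and the law). I would show these are $O(1)$ on $[0,\Delta t]$, and that after subtracting the identity they are $O(\Delta t)$, so that dividing by $\Delta t$ stays bounded.
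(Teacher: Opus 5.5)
Your proposal is correct and follows essentially the same route as the paper: first-order consistency of the one-step coefficients, then transfer of the evaluation regularity to $(\hat b^\pi,(\hat\Sigma^\pi)^{1/2})$ uniformly over $\Pi_*$, then the representation of $\mathcal V^\pi-\hat{\mathcal V}^\pi$ along the true flow combined with the derivative bound for $\hat{\mathcal V}^\pi$ and the second-moment growth, and finally the three-term optimality decomposition. One point to tighten: the $O(\Delta t)$ consistency of $\hat\Sigma$ does not follow from boundedness alone, because crude bounds give only $O(\Delta t^{1/2})$ for both the cross terms and the time average of $\Sigma(s^a_\tau,\mu^a_\tau,a)-\Sigma(s,\mu,a)$; you need martingale orthogonality for the cross terms and the same second It\^o expansion you used for the drift (the paper packages both as a double Dynkin expansion applied to $(x_i-s_i)(x_j-s_j)$).
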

The proof of \Cref{thm:error_V_eval_optimal_pure_mkv} is provided in \Cref{app:error-proofs}.

\begin{remark}
The proof of \Cref{thm:error_V_eval_optimal_pure_mkv} first establishes
order-$\Delta t$ consistency of the one-step coefficients and their
policy averages, together with the regularity required for the exact and
MF-PhiBE evaluation equations. For each
$\pi\in\{\pi^*,\hat\pi_{\Delta t}^*\}$, subtracting these equations and
applying the It\^o--Lions formula, the derivative estimate, and the
uniform second-moment bound yields an order-$\Delta t$ fixed-policy
error. The result then follows from the optimality of $\pi^*$ and
$\hat\pi_{\Delta t}^*$.
\end{remark}

\section{Linear-quadratic mean-field control and MF-PhiBE error}
\label{sec:LQR}
\subsection{Problem setting}
\label{subsec:LQR-model}

We consider an infinite-horizon entropy-regularized linear-quadratic McKean-Vlasov control problem. Let $d,m\in\mathbb N$ and set $\mathcal A=\mathbb R^m$. Throughout the LQR section, the reference measure $\nu$ is Lebesgue
measure on $\mathbb R^m$.

Let $A,\bar A\in\mathbb R^{d\times d}$, $B\in\mathbb R^{d\times m}$, and $\gamma\in\mathbb R^d$ be fixed. In this section, the Brownian motion is one-dimensional, and we consider \eqref{eq:MKV_only_SDE} with
\begin{equation}
\label{eq:setting_lqr_for_phibe}
  b(x,\mu,a)=Ax+\bar A\,\bar\mu+Ba,
  \qquad
  \sigma(x,\mu,a)=\gamma,
  \qquad
  \bar\mu:=\int_{\mathbb R^d}x\,\mu(dx).
\end{equation}
Let $Q,\bar Q\in\mathbb S_+^d$ and $N\in\mathbb S_{++}^m$. The running reward is
\begin{equation}
\label{eq:reward_LQR_simplify}
  r(x,\mu,a)
  =
  -
  \left(
    x^\top Qx
    +
    \bar\mu^\top\bar Q\,\bar\mu
    +
    a^\top Na
  \right).
    \end{equation}
For a randomized feedback policy $\pi$, the entropy-regularized reward $r_\lambda^\pi$ is defined in \eqref{eq:reg_reward_avg_mkv}. The corresponding value functional and optimal value are
\begin{equation}
\label{eq:LQR-MKV-value-functional}
  \mathcal V^\pi(\mu)
  =
  \mathbb E\left[
    \int_0^\infty e^{-\beta t}
    r_\lambda^\pi(\mu_t)\,dt
    \,\bigg|\,
    \mu_0=\mu
  \right],
  \qquad
  \mathcal V^*(\mu)
  :=
  \sup_{\pi\in\Pi_{\mathrm{add}}}\mathcal V^\pi(\mu),
  \qquad
  \mu\in\mathcal P_2(\mathbb R^d).
\end{equation}
Throughout this section, $\Pi_{\mathrm{add}}$ is restricted to feedback policies $\pi:\mathbb R^d\times\mathcal P_2(\mathbb R^d)\to\mathcal P(\mathbb R^m)$ such that $\pi(\cdot\mid x,\mu)\ll\nu$ for every $(x,\mu)$, with strictly positive density $p_\pi(\cdot\mid x,\mu)$, $\mathcal V^\pi(\mu)$ is finite for every $\mu\in\mathcal P_2(\mathbb R^d)$, and the associated law flow satisfies $\lim_{t\to\infty}e^{-\beta t}m_2(\mu_t)=0$.

We now state the structural conditions used in the LQR analysis.

\begin{definition}[Stabilizability]
\label{def:LQR-MKV-stab-detect}
The pair $(A,B)\in\mathbb R^{d\times d}\times\mathbb R^{d\times m}$ is stabilizable if there exists $\Theta\in\mathbb R^{m\times d}$ such that $A+B\Theta$ is Hurwitz.
\end{definition}

\begin{assumptions}[LQR conditions]
\label{ass:LQR-MKV}
Let $A_\beta:=A-\frac{\beta}{2}I_d,$ and $\widetilde A_\beta:=A+\bar A-\frac{\beta}{2}I_d.$
Assume that the following conditions hold:
\begin{enumerate}
  \item[(H1)] $N\in\mathbb S_{++}^m$, $Q\in\mathbb S_{++}^d$, and $Q+\bar Q\in\mathbb S_{++}^d$.
  \item[(H2)] The pairs $(A_\beta,B)$ and $(\widetilde A_\beta,B)$ are stabilizable.
\end{enumerate}
\end{assumptions}

\begin{remark}
\label{rem:LQR-assumptions-specialization}
Condition \Cref{ass:LQR-MKV} {\rm (H1)} is the coercivity condition for the quadratic cost associated with the reward \eqref{eq:reward_LQR_simplify}. Condition \Cref{ass:LQR-MKV} {\rm (H2)} imposes discounted stabilizability of the centered dynamics and of the mean dynamics. These are the structural assumptions used to select the stabilizing solutions of the Riccati equations \eqref{eq:ARE_SIMPLIFY_RICCATI_K} and \eqref{eq:ARE_SIMPLIFY_RICCATI_LAMBDA}.
\end{remark}
Let $K,\Lambda\in\mathbb S_+^d$ and $R\in\mathbb R$ solve
\begin{empheq}[left=\empheqlbrace]{align}
  \beta K
  &=
  Q+KA+A^\top K
  -
  KB N^{-1}B^\top K, \label{eq:ARE_SIMPLIFY_RICCATI_K}\\
  \beta\Lambda
  &=
  Q+\bar Q
  +
  \Lambda(A+\bar A)
  +
  (A+\bar A)^\top\Lambda
  -
  \Lambda B N^{-1}B^\top\Lambda,\label{eq:ARE_SIMPLIFY_RICCATI_LAMBDA} \\
  \beta R
  &=
  \gamma^\top K\gamma
  -
  \frac{\lambda}{2}
  \left(
    m\log(\pi_0\lambda)-\log\det N
  \right),\label{eq:ARE_SIMPLIFY_RICCATI_R}
\end{empheq}
where $\pi_0=3.14159\ldots$ denotes the mathematical constant. The candidate law-value functional is
\begin{equation}
\label{eq:LQR-MKV-value-candidate}
  \mathcal V(\mu)
  =
  -
  \left[
    \operatorname{Tr}\bigl(K\,\mathrm{Cov}(\mu)\bigr)
    +
    \bar\mu^\top\Lambda\,\bar\mu
    +
    R
  \right],
\end{equation}
where $\operatorname{Tr}\bigl(K\,\mathrm{Cov}(\mu)\bigr)
  =
  \int_{\mathbb R^d}
  (x-\bar\mu)^\top K(x-\bar\mu)\,\mu(dx).$ The following proposition is a special case of \cite[Theorem~A.1]{frikha2024full} to the setting considered here.
\begin{proposition}[Optimal policy for the entropy-regularized MKV-LQR problem]\label{prop:LQR-MKV-optimal-policy}
Assume \Cref{ass:LQR-MKV}. Let $(K,\Lambda,R)$ be the stabilizing solution of \eqref{eq:ARE_SIMPLIFY_RICCATI_K}-\eqref{eq:ARE_SIMPLIFY_RICCATI_R}, and let $\mathcal V$ be given by \eqref{eq:LQR-MKV-value-candidate}. Define
\begin{equation}
\label{eq:LQR-MKV-optimal-policy-candidate}
  \pi^*(\cdot\mid x,\mu)
  =
  \mathcal N\left(
    -N^{-1}B^\top
    \bigl(K(x-\bar\mu)+\Lambda\bar\mu\bigr),
    \frac{\lambda}{2}N^{-1}
  \right).
\end{equation}
Then $\pi^*\in\Pi_{\mathrm{add}}$ and
\[
  \mathcal V^*(\mu)
  =
  \mathcal V^{\pi^*}(\mu)
  =
  \mathcal V(\mu),
  \qquad
  \mu\in\mathcal P_2(\mathbb R^d).
\]
\end{proposition}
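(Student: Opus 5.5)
The plan is a verification argument on $\mathcal P_2(\mathbb R^d)$, organised in four steps.

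\emph{Step 1: compute the Lions derivatives of the candidate.} Write $\mathrm{Tr}(K\,\mathrm{Cov}(\mu))=\int x^\top Kx\,\mu(dx)-\bar\mu^\top K\bar\mu$. Then
\[
\partial_\mu\mathcal V(\mu)(\xi)=-2\bigl(K(\xi-\bar\mu)+\Lambda\bar\mu\bigr),
\qquad
D_\xi\partial_\mu\mathcal V(\mu)(\xi)=-2K .
\]
In particular $\mathcal V\in\mathcal C^2_{\mathrm{poly}}(\mathcal P_2(\mathbb R^d))$.

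\emph{Step 2: the entropy-regularized Hamiltonian inequality.} Fix $(\xi,\mu)$ and set $y:=-\tfrac12\partial_\mu\mathcal V(\mu)(\xi)=K(\xi-\bar\mu)+\Lambda\bar\mu$. For any density $p$ on $\mathbb R^m$, maximize
\[
\int\bigl(-a^\top Na-2a^\top B^\top y-\lambda\log p(a)\bigr)p(a)\,da .
\]
By the Gibbs variational principle, the maximizer is $p\propto\exp\bigl(-\lambda^{-1}(a^\top Na+2a^\top B^\top y)\bigr)$, which is exactly the Gaussian $\mathcal N\bigl(-N^{-1}B^\top y,\tfrac\lambda2N^{-1}\bigr)$ in \eqref{eq:LQR-MKV-optimal-policy-candidate}. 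The maximal value is
\[
y^\top BN^{-1}B^\top y+\frac{\lambda}{2}\bigl(m\log(\pi_0\lambda)-\log\det N\bigr).
\]
Integrate the full generator expression against $\mu(d\xi)$, splitting $\xi=(\xi-\bar\mu)+\bar\mu$. The cross terms vanish because $\int(\xi-\bar\mu)\,\mu(d\xi)=0$. Collecting the $\mathrm{Cov}(\mu)$ coefficient gives \eqref{eq:ARE_SIMPLIFY_RICCATI_K}. The $\bar\mu\bar\mu^\top$ coefficient gives \eqref{eq:ARE_SIMPLIFY_RICCATI_LAMBDA}, where $\bar A$ enters through $b$ and $\bar Q$ through the reward. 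The constant term, which collects $\gamma^\top K\gamma$ and the entropy constant, gives \eqref{eq:ARE_SIMPLIFY_RICCATI_R}. The conclusion is that, for every $\pi\in\Pi_{\mathrm{add}}$,
\[
(\mathcal L^\pi_{b,\Sigma}-\beta)\mathcal V(\mu)+r^\pi_\lambda(\mu)\le 0,
\]
with equality for $\pi=\pi^*$.

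\emph{Step 3: It\^o--Lions formula along the law flow.} Fix $\pi\in\Pi_{\mathrm{add}}$ and let $(\mu_t)$ be its flow. The It\^o formula along a flow of marginals gives, for $T>0$,
\[
e^{-\beta T}\mathcal V(\mu_T)-\mathcal V(\mu)=\int_0^T e^{-\beta t}\,(\mathcal L^\pi-\beta)\mathcal V(\mu_t)\,dt .
\]
Here $\mathcal V$ is quadratic, so only finite second moments of $\mu_t$ are needed. Note also that $\mathrm{Tr}(K\,\mathrm{Cov})$ can be computed via $\tfrac{d}{dt}\int x^\top Kx\,d\mu_t$ and $\tfrac{d}{dt}\bar\mu_t$ directly from the SDE. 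Using Step 2, this yields
\[
\mathcal V(\mu)\ge\int_0^T e^{-\beta t}r^\pi_\lambda(\mu_t)\,dt+e^{-\beta T}\mathcal V(\mu_T).
\]
Since $|\mathcal V(\mu_T)|\le C(1+m_2(\mu_T))$ and the admissibility class requires $e^{-\beta T}m_2(\mu_T)\to0$, letting $T\to\infty$ gives $\mathcal V\ge\mathcal V^\pi$. Here finiteness of $\mathcal V^\pi$ is used to justify the limit of the integral, splitting $r^\pi_\lambda$ into its positive and negative parts if needed. For $\pi^*$ the same argument holds with equality, so $\mathcal V^{\pi^*}=\mathcal V=\mathcal V^*$.

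\emph{Step 4: admissibility of $\pi^*$, the main obstacle.} The density of $\pi^*$ is Gaussian and positive. Under $\pi^*$ the closed-loop dynamics are linear. The mean satisfies
\[
\dot{\bar\mu}_t=\bigl(A+\bar A-BN^{-1}B^\top\Lambda\bigr)\bar\mu_t ,
\]
and the centered part has drift matrix $A-BN^{-1}B^\top K$ and constant noise $\gamma$. The stabilizing Riccati solutions, which exist under (H1)--(H2), make $A_\beta-BN^{-1}B^\top K$ and $\widetilde A_\beta-BN^{-1}B^\top\Lambda$ Hurwitz. Consequently $e^{-\beta t}|\bar\mu_t|^2$ decays and $e^{-\beta t}\mathrm{Cov}(\mu_t)\to0$ (through the Lyapunov equation for the covariance), so $e^{-\beta t}m_2(\mu_t)\to0$ and $\mathcal V^{\pi^*}$ is finite. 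Establishing the existence, uniqueness and stabilizing property of $K$ and $\Lambda$ is the delicate part; I would import it from standard ARE theory as done in \cite[Theorem~A.1]{frikha2024full}.
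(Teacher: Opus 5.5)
Your verification argument is correct. The paper does not prove this proposition itself; it invokes it as a special case of \cite[Theorem~A.1]{frikha2024full}, which treats a more general entropy-regularized LQ mean-field model with common noise. (The proof of \Cref{thm:policy_error_phibe_mkv} alludes to that model's extra coefficients $D,\bar D,F,I$ being set to zero.)

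Your computations check out:
\begin{itemize}
\item the Lions derivatives are $\partial_\mu\mathcal V(\mu)(\xi)=-2\bigl(K(\xi-\bar\mu)+\Lambda\bar\mu\bigr)$ and $D_\xi\partial_\mu\mathcal V=-2K$;
\item the Gibbs maximizer is $\mathcal N\bigl(-N^{-1}B^\top y,\frac{\lambda}{2}N^{-1}\bigr)$, with optimal value $y^\top BN^{-1}B^\top y+\frac{\lambda}{2}\bigl(m\log(\pi_0\lambda)-\log\det N\bigr)$;
\item the covariance/mean/constant split reproduces \eqref{eq:ARE_SIMPLIFY_RICCATI_K}--\eqref{eq:ARE_SIMPLIFY_RICCATI_R} exactly, and the transversality condition built into $\Pi_{\mathrm{add}}$ removes the boundary term.
\end{itemize}

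The citation buys generality and outsources the Riccati theory. Your route is self-contained and exhibits the mechanism the paper later reuses in Step 6 of the proof of \Cref{thm:policy_error_phibe_mkv}. There, $-\frac{d}{dt}\mathcal V+\beta\mathcal V-r_\lambda$ is integrated along a law flow and split into mean and covariance residuals of the two CAREs. The pointwise-in-$\xi$ maximization decouples precisely because $\partial_\mu\mathcal V(\mu)(\xi)$ is affine in $\xi$.

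Two minor points.
\begin{itemize}
\item The statement already fixes $(K,\Lambda,R)$ as the stabilizing solution. So Step 4 only needs that $A_\beta-BN^{-1}B^\top K$ and $\widetilde A_\beta-BN^{-1}B^\top\Lambda$ are Hurwitz, which is what ``stabilizing'' means; no existence theory has to be imported.
\item For a general $\pi\in\Pi_{\mathrm{add}}$, the chain rule in Step 3 needs integrability such as $\int_0^T\!\int\bigl(|x|^2+|b^\pi(x,\mu_t)|^2\bigr)\mu_t(dx)\,dt<\infty$. This follows from finiteness of $\mathcal V^\pi$ together with $Q,N\succ0$. The reason is that an action law with second-moment matrix $S$ satisfies $-\operatorname{tr}(NS)+\lambda\mathcal H\le-\frac12\operatorname{tr}(NS)+C$, so every nonconstant term in $r^\pi_\lambda$ is a nonpositive quadratic penalty that must be time-integrable.
\end{itemize}
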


\subsection{LQR transition kernel and MF-PhiBE error}
The linear structure of the LQR dynamics yields an explicit frozen-action
transition kernel and hence an explicit drift-only LQR-MF-PhiBE. We first
derive that transition law.

Fix $\Delta t>0$. On the interval $[0,\Delta t]$, we take the control to be constant and equal to $a\in\mathbb R^m$. Taking the mean in the McKean-Vlasov dynamics \eqref{eq:MKV_only_SDE} gives
\[
    \frac{d}{d\tau}\bar\mu_\tau
    =
    \tilde A\,\bar\mu_\tau+Ba,
    \qquad
    \bar\mu_0=\bar\mu,
    \qquad
    \tilde A:=A+\bar A.
\]
Hence
\[
    \bar\mu_\tau
    =
    e^{\tilde A\tau}\bar\mu
    +
    M_{\tilde A,\tau}Ba,
    \qquad
    M_{\tilde A,\tau}
    :=
    \int_0^\tau e^{\tilde A(\tau-r)}\,dr.
\]
Solving the linear state equation on $[0,\Delta t]$ gives the
one-step transition law
\begin{align}
\label{eq:LQR-phibe-transition-law}    
  \rho_{\Delta t}(s'\mid s,\bar\mu,a)
    =
    \mathcal N\!\left(
        (I_d+\Delta t\,\hat A)s
        +
        \Delta t\,\hat{\bar A}\,\bar\mu
        +
        \Delta t\,\hat B a,\,
        C_{A,\Delta t}
    \right),
\end{align}
where
\begin{equation}
\label{eq:definition_coef_HAT}
    \hat A
    :=
    \frac{e^{A\Delta t}-I_d}{\Delta t},
    \quad
    \hat{\bar A}
    :=
    \frac{1}{\Delta t}
    \int_0^{\Delta t}
    e^{A(\Delta t-\tau)}
    \bar A\,e^{\tilde A\tau}\,d\tau,\quad \hat B
    :=
    \frac{1}{\Delta t}
    \int_0^{\Delta t}
    e^{A(\Delta t-\tau)}
    \bigl(\bar A\,M_{\tilde A,\tau}B+B\bigr)\,d\tau
\end{equation}
and \begin{equation*}C_{A,\Delta t} := \int_0^{\Delta t} e^{A(\Delta t-\tau)}\,\gamma\gamma^\top\,e^{A^\top(\Delta t-\tau)}\,d\tau. 
\end{equation*}

\begin{remark}[Non-identifiability of the transition mean]
\label{rem:nonidentifiability_mkv}
For a fixed time step $\Delta t$, consider one-step data of the form $(s,\mu,a,s')$, where $s'\sim \rho_{\Delta t}(\cdot\mid s,\mu,a)$. Since the LQR-MF-PhiBE below is based on the first-moment estimator, the relevant question is whether the one-step transition mean determines the continuous-time matrices $(A,\bar A,B)$ uniquely. This leads us to study the map
\[
    (A,\bar A,B)\longmapsto \mathcal M_{\rho_{\Delta t}}(s,\mu,a)
    =
    (I_d+\Delta t\,\hat A)s
    +
    \Delta t\,\hat{\bar A}\,\bar\mu
    +
    \Delta t\,\hat B a.
\]
Using \eqref{eq:definition_coef_HAT}, the transition mean can be rewritten as
\[
    \mathcal M_{\rho_{\Delta t}}(s,\mu,a)
    =
    e^{A\Delta t}(s-\bar\mu)
    +
    e^{\tilde A\Delta t}\bar\mu
    +
    M_{\tilde A,\Delta t}Ba.
\]
Therefore, the map $(A,\bar A,B)\mapsto \mathcal M_{\rho_{\Delta t}}$ depends on the matrices only through $e^{A\Delta t}$, $e^{\tilde A\Delta t}$, and $M_{\tilde A,\Delta t}B$. Since the matrix exponential is not injective, and since the same aliasing mechanism can be arranged simultaneously in the term $M_{\tilde A,\Delta t}B$, this map is in general not injective. Hence, the continuous-time matrices cannot, in general, be uniquely recovered from one-step transition means sampled on a fixed time grid.

The following two-dimensional example illustrates this non-identifiability. Fix $d=m=2$ and let $A=-I_2$, $\bar A=\alpha I_2$, and $B=I_2$, so that $\tilde A=(\alpha-1)I_2$, with $\alpha\neq1$. Let
\[
    J=
    \begin{pmatrix}
        0 & 1\\
        -1 & 0
    \end{pmatrix},
    \qquad
    \omega_k:=\frac{2\pi k}{\Delta t},
    \qquad
    k\in\mathbb Z\setminus\{0\}.
\]
Define $A^{(k)}:=-I_2+\omega_kJ$, $\tilde A^{(k)}:=(\alpha-1)I_2+\omega_kJ$, $\bar A^{(k)}:=\tilde A^{(k)}-A^{(k)}=\alpha I_2$, and $B^{(k)}:=(\alpha-1)^{-1}\tilde A^{(k)}$. Since $e^{2\pi kJ}=I_2$, we have $e^{A^{(k)}\Delta t}=e^{-\Delta t}e^{2\pi kJ}=e^{-\Delta t}I_2=e^{A\Delta t}$, and similarly $e^{\tilde A^{(k)}\Delta t}=e^{\tilde A\Delta t}$. Moreover, using $M_{\tilde A,\Delta t}=\tilde A^{-1}(e^{\tilde A\Delta t}-I_2)$, we obtain
\[
    M_{\tilde A^{(k)},\Delta t}B^{(k)}
    =
    (\alpha-1)^{-1}
    \bigl(e^{\tilde A^{(k)}\Delta t}-I_2\bigr)
    =
    (\alpha-1)^{-1}
    \bigl(e^{\tilde A\Delta t}-I_2\bigr)
    =
    M_{\tilde A,\Delta t}B.
\]
Consequently, $\mathcal M_{\rho_{\Delta t}}^{(k)}(s,\mu,a)
    =
    \mathcal M_{\rho_{\Delta t}}(s,\mu,a)$ for all $(s,\mu,a)$ even though $(A^{(k)},\bar A^{(k)},B^{(k)})\neq(A,\bar A,B)$. Thus, the matrices are not identifiable from one-step transition means. The same obstruction persists for any finite collection of one-step observations sampled on the same time grid when only first-moment information is used. This motivates formulating the LQR-MF-PhiBE directly in terms of the one-step coefficients $\hat A,\hat{\bar A},\hat B$.
\end{remark}

The following observation is the key reason why the LQR-MF-PhiBE can be simplified in the additive-noise setting.

\begin{remark}
\label{rem:additive-noise-same-feedback}
In the additive-noise setting considered in \eqref{eq:setting_lqr_for_phibe}, the diffusion coefficient
$\gamma$ enters the Riccati system only through the scalar equation for
$R$. Since the constant $R$ does not enter the feedback law, the
deterministic dynamics obtained by setting $\gamma=0$ and the stochastic
dynamics with $\gamma\neq0$ induce the same optimal policy. They differ
only through the constant term of the value function.
\end{remark}

Motivated by \Cref{rem:additive-noise-same-feedback}, we now introduce a
drift-only approximation of the optimal PhiBE in the LQR setting. The
purpose is not to approximate the full infinitesimal generator, but to
recover the optimal feedback law. Since the additive diffusion affects
only the constant part of the value function, we estimate the drift from
one-step transition data and impose a zero second-order term in the
surrogate problem.

Now, in order to introduce MF-PhiBE in the LQR setting, let us note that by taking the first moment in \eqref{eq:LQR-phibe-transition-law} and using
\eqref{eq:estimated-drift-policy-mkv22}, we obtain
\begin{equation}
\label{eq:LQR-phibe-drift-estimator}
    \hat b(s,\mu,a)
    =
    \hat A s+\hat{\bar A}\,\bar\mu+\hat B a.
\end{equation}

\begin{definition}[LQR-MF-PhiBE]
\label{def:LQR-MF-PhiBE}
For a fixed policy $\pi\in\Pi_{\mathrm{add}}$, the LQR-MF-PhiBE is the drift-only MF-PhiBE
\begin{equation}
\label{eq:LQR-MF-PhiBE-evaluation}
(\mathcal L_{\hat b,0}^{\pi}-\beta)\hat{\mathcal V}^{\pi}(\mu)
+
r_\lambda^\pi(\mu)
=
0,
\qquad
\mu\in\mathcal P_2(\mathbb R^d).
\end{equation}
Here $\mathcal L_{\hat b,0}^{\pi}$ denotes the mean-field generator in
\eqref{eq:L-pi-mkv} with drift $\hat b^\pi$, obtained from
\eqref{eq:LQR-phibe-drift-estimator}, and with zero diffusion matrix.
Whenever \eqref{eq:LQR-MF-PhiBE-evaluation} admits a unique solution in
$\mathcal C^2_{\mathrm{poly}}(\mathcal P_2(\mathbb R^d))$ represented by
the auxiliary flow introduced in
\eqref{eq:LQR-MF-PhiBE-auxiliary-dynamics}, we denote that solution by
$\hat{\mathcal V}^\pi$. The corresponding optimal value is
\begin{equation}
\label{eq:LQR-MF-PhiBE-optimal-value}
\hat{\mathcal V}^*(\mu)
:=
\sup_{\pi}\hat{\mathcal V}^{\pi}(\mu),
\qquad
\mu\in\mathcal P_2(\mathbb R^d),
\end{equation}
where the supremum is taken over the policies for which this
representation is finite and the auxiliary transversality condition
holds. If a policy $\hat\pi^*$ attains this supremum for every $\mu$, it
is called an optimal LQR-MF-PhiBE policy.
\end{definition}
 
The following theorem quantifies the error between the optimal feedback mean of the exact entropy-regularized LQR problem and the optimal feedback mean obtained from the LQR-MF-PhiBE approximation.

\begin{theorem}[Error estimate between the exact and LQR-MF-PhiBE optimal policies]
\label{thm:policy_error_phibe_mkv}
Assume \Cref{ass:LQR-MKV}. Then there exists $\Delta t_*>0$ such that, for every $\Delta t\in(0,\Delta t_*]$, the optimal LQR-MF-PhiBE policy is given by
\[
  \hat\pi^*(\cdot\mid s,\mu)
  =
  \mathcal N\left(
    -N^{-1}\hat B^\top
    \bigl(
      \hat K(s-\bar\mu)+\hat\Lambda\bar\mu
    \bigr),
    \frac{\lambda}{2}N^{-1}
  \right),
\]
where $(\hat K,\hat\Lambda)$ are the stabilizing solutions of \eqref{eq:ARE_SIMPLIFY_RICCATI_K}-\eqref{eq:ARE_SIMPLIFY_RICCATI_LAMBDA} with $(A,\bar A,B)$ replaced by $(\hat A,\hat{\bar A},\hat B)$. Let $(K,\Lambda)$ be the stabilizing solution of \eqref{eq:ARE_SIMPLIFY_RICCATI_K}-\eqref{eq:ARE_SIMPLIFY_RICCATI_LAMBDA}. Then, for every $\Delta t\in(0,\Delta t_*]$, every $s\in\mathbb R^d$, and every $\mu\in\mathcal P_2(\mathbb R^d)$, we have
\begin{align}
\label{eq:error_policy_mean_lqr_phibe}
  \begin{aligned}
        \mathcal{W}_2(\hat \pi^*(\cdot\mid s,\mu),\pi^*(\cdot\mid s,\mu))
    &\le
    \|N^{-1}\|\Delta t 
    \Biggl[
    \Biggl(
      2\frac{\|B\|+C_B\Delta t}{\lambda_{\min}(Q)^2}
      \left(
         C_{K,1}
        +
        \beta \,C_{K,2}
      \right)
      +
      C_{K,3}
    \Biggr)
    \|s-\bar\mu\|
    \\
    &\qquad\qquad\qquad
    +
    \Biggl(
      2\frac{\|B\|+C_B\Delta t}
      {\lambda_{\min}(Q+\bar Q)^2}
      \left(
        C_{\Lambda,1}
        +
        \beta\, C_{\Lambda,2}
      \right)
      +
      C_{\Lambda,3}
    \Biggr)
    \|\bar\mu\|
    \Biggr].
  \end{aligned}
\end{align}
Here $C_B$ and $C_{K,i},C_{\Lambda,i}$, $i=1,2,3$, are defined in \eqref{eq:constant_C_A_b}-\eqref{eq:constant_C_K_1_C_Lambd_2}. These constants are independent of $\Delta t$ and are expressed as products of $\|A\|$, $\|\bar A\|$, $\|B\|$, $\|K\|$, $\|\Lambda\|$, and $\|N^{-1}\|$. The factors involving $\beta$, $\lambda_{\min}(Q)$, and $\lambda_{\min}(Q+\bar Q)$ are kept separate in \eqref{eq:error_policy_mean_lqr_phibe}. For the value function, there exist constants $C>0$ and $\Delta t_*>0$ such that, for every $\Delta t\in(0,\Delta t_*]$ and every $\mu\in\mathcal P_2(\mathbb R^d)$,
\begin{align}\label{eq:error_value_functions_LQR}
    \left|
\mathcal V^{\pi^*}(\mu)-\mathcal V^{\hat\pi^*}(\mu)
\right|
\le
C\Delta t^2\bigl(1+m_2(\mu)\bigr).
\end{align}
Let $\bar\pi$ and $\widehat{\bar\pi}$ denote the means of $\pi$ and $\hat\pi$, respectively. Finally, for the corresponding deterministic non-entropic problem with $\beta=\gamma=\lambda=0$, if $\Lambda\tilde A=\tilde A^\top\Lambda$, then
\begin{align}
\label{eq:same_expectation_optimal_policies}
      \mathbb{E}_{s\sim\mu}\bigl[\widehat{\bar\pi}^*(s,\mu)-\bar\pi^*(s,\mu)\bigr]=0,
  \qquad
  \mu\in\mathcal P_2(\mathbb R^d).
\end{align}
\end{theorem}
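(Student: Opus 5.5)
The proof has three parts, taken in order: identify the optimal LQR-MF-PhiBE policy, estimate the Riccati perturbation that gives the Wasserstein bound, and then do a second-order expansion for the value gap and the mean identity.

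\textbf{Step 1: form of $\hat\pi^*$.} The LQR-MF-PhiBE \eqref{eq:LQR-MF-PhiBE-evaluation} is the exact policy-evaluation equation of an auxiliary deterministic McKean--Vlasov LQ problem. Its drift is $\hat A s+\hat{\bar A}\bar\mu+\hat B a$, its diffusion is zero, and its reward is the same entropy-regularized reward. We therefore apply \Cref{prop:LQR-MKV-optimal-policy} to $(\hat A,\hat{\bar A},\hat B)$ with $\gamma=0$. This requires the hypotheses of \Cref{ass:LQR-MKV} for the hatted matrices.

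Condition (H1) is unchanged. For (H2), note that \eqref{eq:definition_coef_HAT} gives $\hat A=A+O(\Delta t)$, $\hat{\bar A}=\bar A+O(\Delta t)$ and $\hat B=B+\frac{\Delta t}{2}(AB+\bar A B)+O(\Delta t^2)$, with explicit constants such as $C_B$. Stabilizability is an open condition: the stabilizing gains for $(A_\beta,B)$ and $(\widetilde A_\beta,B)$ still stabilize the perturbed pairs for $\Delta t\le\Delta t_*$. The Riccati equations then have stabilizing solutions $\hat K,\hat\Lambda$. The minimizing Gaussian computed from the Hamiltonian gives the stated $\hat\pi^*$, and Remark~\ref{rem:additive-noise-same-feedback} justifies dropping the diffusion term.

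\textbf{Step 2: policy error.} Both policies are Gaussians with the same covariance $\frac{\lambda}{2}N^{-1}$, so $\mathcal W_2$ equals the distance between their means. That distance is bounded by
\[
\|N^{-1}\|\bigl(\|\hat B-B\|\,\|\hat K\|+\|B\|\,\|\hat K-K\|\bigr)\|s-\bar\mu\|
\]
plus the analogous term with $\Lambda$ and $\|\bar\mu\|$.

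To control $\hat K-K$, we subtract the two Riccati equations. The difference $E=\hat K-K$ solves a Lyapunov equation,
\[
(A_\beta-BN^{-1}B^\top K)^\top E+E(\cdots)=-\bigl[\text{terms with }\hat A-A,\ \hat B-B\bigr]+\text{quadratic in }E.
\]
We bound the solution of this Lyapunov equation through the coercivity $\hat K\succeq\cdot$ and $Q\succ 0$. The standard estimate $\|E\|\le\frac{2}{\lambda_{\min}(Q)^2}\|K\|^2\cdots\|\text{residual}\|$ is where the factors $\lambda_{\min}(Q)^{-2}$ and $\beta$ come from. We absorb the quadratic term by taking $\Delta t$ small. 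The same argument applies to $\Lambda$ with $Q+\bar Q$ and $\tilde A$. This Lyapunov step, which must produce constants with exactly the displayed structure, is the main obstacle. I would first rewrite $E$ through the closed-loop integral representation $\int_0^\infty e^{\bar A_{cl}^\top t}(\cdot)e^{\bar A_{cl}t}\,dt$ and bound the decay using $K\succeq$ (a multiple of) $Q/\beta$.

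\textbf{Step 3: value gap and mean identity.} For \eqref{eq:error_value_functions_LQR}, we use the fact that $\pi^*$ maximizes the concave Hamiltonian. The performance-difference identity (\Cref{cor:policy-gradient-mkv} integrated along the segment between $\pi^*$ and $\hat\pi^*$, or directly Itô on $\mathcal V$ along the $\hat\pi^*$ flow) gives
\[
\mathcal V^{\pi^*}-\mathcal V^{\hat\pi^*}=\int_0^\infty e^{-\beta t}\int (\hat\theta-\theta)^\top N(\hat\theta-\theta)\,d\mu_t\,dt,
\]
where $\theta$ denotes the mean feedback. The first-order term vanishes because the advantage is centered at the optimum. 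Step 2 makes the integrand $O(\Delta t^2)(|s-\bar\mu|^2+|\bar\mu|^2)$. A second-moment bound for the stable closed loop under $\hat\pi^*$, uniform for $\Delta t\le\Delta t_*$, gives the factor $1+m_2(\mu)$.

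For \eqref{eq:same_expectation_optimal_policies}, the centered part integrates to zero. The mean feedbacks are therefore $-N^{-1}\hat B^\top\hat\Lambda\bar\mu$ and $-N^{-1}B^\top\Lambda\bar\mu$. Setting $\beta=\gamma=\lambda=0$ and $\bar A$-dynamics, the mean system is a deterministic LQR with $\hat{\tilde A}=\frac{e^{\tilde A\Delta t}-I}{\Delta t}$ and $\hat B$ built from $M_{\tilde A,\Delta t}$. We verify $\hat B^\top\hat\Lambda=B^\top\Lambda$ by substituting the candidate $\hat\Lambda$ expressed through $\Lambda$ and the symmetry $\Lambda\tilde A=\tilde A^\top\Lambda$ into the hatted Riccati equation. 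The commutation lets $e^{\tilde A\Delta t}$ and $M_{\tilde A,\Delta t}$ pass through $\Lambda$, and uniqueness of the stabilizing solution then gives the identity.
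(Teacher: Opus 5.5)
Your plan follows the paper's architecture. The form of $\hat\pi^*$ comes from the drift-only auxiliary LQR with $(\hat A,\hat{\bar A},\hat B)$ and openness of stabilizability. The policy error comes from a Riccati perturbation bound. The value gap comes from the exact quadratic residual obtained by differentiating $\mathcal V^{\pi^*}$ along the $\hat\pi^*$-flow. The mean identity comes from a candidate $\hat\Lambda$ built from $\Lambda$, plus uniqueness.

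\textbf{The gap: absorption in Step~2.} Set $G:=BN^{-1}B^\top$, $\Phi_K:=A_\beta-GK$, $\Delta A:=\hat A-A$, $\Delta G:=\hat BN^{-1}\hat B^\top-G$ and $E:=\hat K-K$. Subtracting the two Riccati equations gives
\[
\Phi_K^\top E+E\Phi_K=-\bigl(\hat K\Delta A+\Delta A^\top\hat K\bigr)+\hat K\Delta G\hat K+EGE .
\]
Since $\hat K=K+E$ appears on the right, bounding the Lyapunov solution only yields an inequality $\|E\|\le\varepsilon+a\|E\|+b\|E\|^2$ with $\varepsilon,a=O(\Delta t)$. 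This confines $\|E\|$ either to a small interval near $0$ or to a half-line near $1/b$. Taking $\Delta t$ small does not select the small branch; that requires a priori information that $\|E\|$ is small, which is what you are proving.

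The paper gets this from \cite[Theorem~3.1]{Sun1998}, a fixed-point argument whose smallness condition holds for small $\Delta t$. Once its denominator (the rationalized small root) exceeds $l_K$, it gives $\|\hat K-K\|\le 2\bigl(p_K\|\Delta A\|+q_K\|\Delta G\|\bigr)$. This is where the factor $2$ in \eqref{eq:error_policy_mean_lqr_phibe} comes from. A self-contained alternative:
\begin{itemize}
\item apply the implicit function theorem at $K$; the derivative of the Riccati map is $W\mapsto\Phi_K^\top W+W\Phi_K$, invertible since $\Phi_K$ is Hurwitz;
\item note that the resulting symmetric branch has a Hurwitz closed loop for small $\Delta t$;
\item identify that branch with $\hat K$ by uniqueness of the stabilizing solution.
\end{itemize}
Only after this can you absorb.

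\textbf{The correct lower bound on $K$.} The bound that produces the $\beta$ and $\lambda_{\min}(Q)^{-2}$ structure is not of the form $K\succeq cQ/\beta$. It is
\[
\frac{1}{\lambda_{\min}(K)}\le\frac{\beta+2\|A\|+\|G\|\|K\|}{\lambda_{\min}(Q)},
\]
obtained by testing the Riccati equation on a bottom eigenvector of $K$. Combine it with the decay $\|e^{\Phi_Kt}\|^2\le\frac{\|K\|}{\lambda_{\min}(K)}e^{-\lambda_{\min}(Q)t/\|K\|}$, which follows from $\Phi_K^\top K+K\Phi_K\preceq-Q$. Together these bound the inverse Lyapunov operator and hence $p_K,q_K$, as in \Cref{lem:sun_constants_explicit_beta}. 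The $\beta$-free part $2\|A\|+\|G\|\|K\|$ is what feeds $C_{K,1}$.

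\textbf{Value gap.} Keep only the It\^o route. \Cref{cor:policy-gradient-mkv} is not directly available along the segment from $\pi^*$ to $\hat\pi^*$. For Gaussians with common covariance and different means, $d\varphi/d\pi^*$ is an unbounded exponential in $a$, so \Cref{ass:signed_perturbation} fails.

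\textbf{Mean identity.} Make the candidate explicit. With $F_{\Delta t}:=M_{\tilde A,\Delta t}/\Delta t$ one has $\hat B=F_{\Delta t}B$ and $\hat A+\hat{\bar A}=\tilde AF_{\Delta t}=F_{\Delta t}\tilde A$. The candidate is $P_{\Delta t}:=\Lambda F_{\Delta t}^{-1}$, which gives $P_{\Delta t}\hat B=\Lambda B$ and $P_{\Delta t}(\hat A+\hat{\bar A})=\Lambda\tilde A$. What moves across $\Lambda$ is not a commutation but the relation $\Lambda F_{\Delta t}=F_{\Delta t}^\top\Lambda$, inherited from $\Lambda\tilde A=\tilde A^\top\Lambda$; this is what makes $P_{\Delta t}$ symmetric. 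Before invoking uniqueness, also check that $P_{\Delta t}\succ0$ and that its closed loop $F_{\Delta t}\bigl(\tilde A-BN^{-1}B^\top\Lambda\bigr)$ is Hurwitz for small $\Delta t$.
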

The proof of \Cref{thm:policy_error_phibe_mkv} is given in \Cref{app:LQR-proofs}.

\begin{remark}
\label{rem:undiscounted-LQR-MF-PhiBE}
\Cref{thm:policy_error_phibe_mkv} provides an $O(\Delta t)$ policy error. In the deterministic undiscounted case
$\beta=\gamma=\lambda=0$, under the additional condition
$\Lambda\tilde A=\tilde A^\top\Lambda$, the proof yields
$\hat\Lambda\hat B=\Lambda B$, which implies exact equality of the population-averaged feedback means.  Note that this condition holds automatically when the state is one-dimensional. The statement with $\lambda=0$ is understood for the corresponding deterministic control problem. The conditions $\gamma=0$ and $\lambda=0$ concern the interpretation as
an undiscounted infinite-horizon control problem. Indeed,
\Cref{lem:m2-bound-beta-zero-lqr-mkv} gives
\[
m_2(\mu_t)
\le
Ce^{-2\kappa t}m_2(\mu_0)+C|\gamma|^2,
\]
so a nonzero additive diffusion generally prevents integrability of the
undiscounted quadratic running cost. Moreover, when $\beta=0$, the
scalar equation for $R$ becomes a compatibility condition. In the
deterministic non-entropic problem $\gamma=\lambda=0$, the additive
constant may be fixed by imposing, for example,
$\mathcal V(\delta_0)=0$.
\end{remark}
\Cref{fig:conver_error_LQR_MF_PHIBE} illustrates the convergence rates predicted by \Cref{thm:policy_error_phibe_mkv}. The left plot shows the error between $\pi^*$ and $\widehat\pi^*$ in the $\mathcal{E}_2$-distance, defined in \eqref{eq:difinition_E_2}. In view of \eqref{eq:relation_W_2_E_2}, this is consistent with the linear convergence rate in the $\mathcal{W}_2$-distance. The middle plot corroborates the quadratic convergence rate for the value function. Finally, the right plot shows that, for each displayed value of $\Delta t$, as $\beta\to0$ one has
$\mathbb{E}_{s\sim\mu}[\widehat{\bar\pi}^*(s,\mu)]\to \mathbb{E}_{s\sim\mu}[\bar\pi^*(s,\mu)]$,
with the limit attained at $\beta=0$.

\begin{figure}
    \centering
    \includegraphics[width=\linewidth]{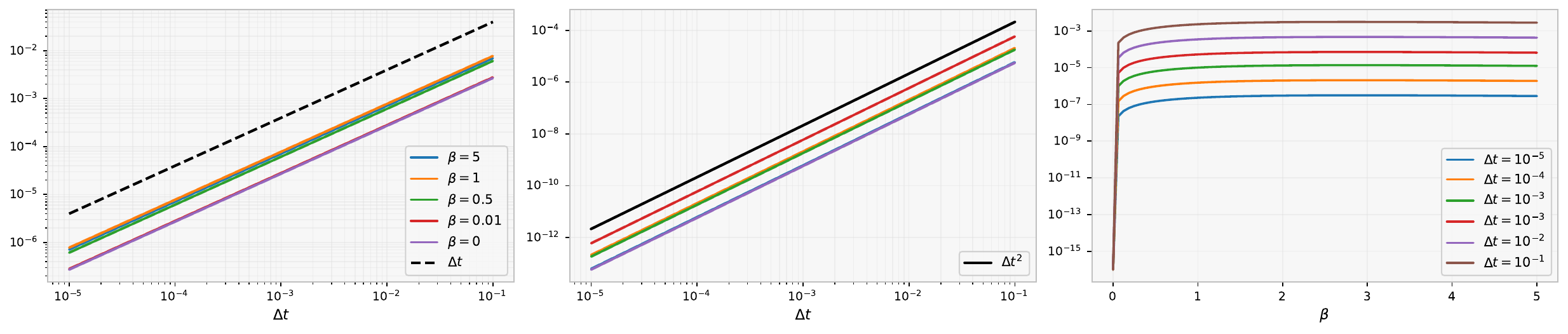}
    \caption{Numerical validation of \Cref{thm:policy_error_phibe_mkv}. Left: error between $\pi^*$ and $\hat \pi^*$ in the $\mathcal{E}_2$ distance. Center: error between the optimal value $V^{\pi^*}$ and the value $V^{\hat\pi^*}$ induced by the LQR-MF-PhiBE optimal policy. Right: error between the corresponding expected feedback means, $\mathbb E[\bar\pi^*]$ and $\mathbb E[\widehat{\bar\pi}^*]$.}\label{fig:conver_error_LQR_MF_PHIBE}
\end{figure}

\section{Model-free actor-critic algorithm}
\label{sec:algorithm}

We now describe a model-free actor-critic implementation based on the
MF-PhiBE introduced in \Cref{sec:MF-phibe} and the policy gradient
theorem introduced in \Cref{sec:policy-gradient}. The algorithm is
model-free in the sense that it does not require knowledge of $b$ and
$\sigma$. Instead, it relies on observed data.

Fix a parametrized family of randomized feedback policies $\{\pi_\omega\}_{\omega\in\mathbb R^p}$. For a given $\omega$, we assume access to state--law pairs $(s_\eta,\mu_\eta)$ visited under $\pi_\omega$. At each pair, an action $a_\eta\sim \pi_\omega(\cdot\mid s_\eta,\mu_\eta)$ is sampled and a local frozen-action transition of length $\Delta t$ is
generated as in
\eqref{eq:estimated-drift-policy-mkv22}--
\eqref{eq:estimated-covariance-policy-mkv2}. The resulting dataset is
\[
\mathcal D
=
\left\{
(s_\eta,\mu_\eta,a_\eta,s_\eta',r_\eta)
\right\}_{\eta=1}^{N_D}.
\]
Here $s_\eta\sim\mu_\eta$, $s_\eta'$ is the endpoint of the corresponding
local frozen-action transition, and $r_\eta=r(s_\eta,\mu_\eta,a_\eta)$. The index $\eta$ may enumerate independent branching transitions or a
flattened collection obtained from several visited law trajectories.

The law $\mu_\eta$ may be represented empirically. A finite-dimensional
law representation is sufficient only when the chosen policy, reward,
transition sampler, and critic can all be evaluated from those law
features. For example, the first two moments are sufficient in the LQR
setting.

\smallbreak

For notational convenience, we attach a nonnegative weight $w_\eta$ to
each sample. These weights specify the empirical measure with respect to
which the MF-PhiBE residual is projected. For instance, one may take
uniform weights $w_\eta=1/N_D$, or, when the data come from a time
trajectory and one wants to approximate a discounted time integral, one
may take $w_\eta=e^{-\beta t_\eta}\Delta t$. The formulas below are
independent of this particular choice. 

For each sample we define the one-step estimates
\begin{align}
\label{eq:coef_est}
     \widehat b_\eta
  :=
  \frac{s_\eta'-s_\eta}{\Delta t},
  \qquad
  \widehat\Sigma_\eta
  :=
  \frac{(s_\eta'-s_\eta)(s_\eta'-s_\eta)^\top}{\Delta t}.
\end{align}
Conditionally on $(s_\eta,\mu_\eta,a_\eta)$, the quantities
$\widehat b_\eta$ and $\widehat\Sigma_\eta$ are unbiased estimators of
$\hat b(s_\eta,\mu_\eta,a_\eta)$ and
$\hat\Sigma(s_\eta,\mu_\eta,a_\eta)$, introduced in
\eqref{eq:estimated-drift-policy-mkv22} and
\eqref{eq:estimated-covariance-policy-mkv2}, respectively. We also
define the entropy-regularized observed reward
\[
  r_{\lambda,\eta}
  :=
  r_\eta
  -
  \lambda\log p_{\pi_\omega}(a_\eta\mid s_\eta,\mu_\eta).
\]\subsection{Policy evaluation}
\label{subsec:critic-algorithm}

For a fixed policy $\pi_\omega$, we approximate the MF-PhiBE using the
dataset $\mathcal D$. For any smooth functional
$F:\mathcal P_2(\mathbb R^d)\to\mathbb R$, define
\[
  \widehat{\mathcal L}_\eta F
  :=
  \widehat b_\eta\cdot
  \partial_\mu F(\mu_\eta)(s_\eta)
  +
  \frac12\,\widehat\Sigma_\eta:
  D_\xi\partial_\mu F(\mu_\eta)(s_\eta).
\]
This quantity should be understood as the sample contribution associated
with the averaged generator in the MF-PhiBE. More precisely, after
averaging over $s_\eta\sim\mu_\eta$, over
$a_\eta\sim\pi_\omega(\cdot\mid s_\eta,\mu_\eta)$, and over the
one-step transition, it gives
\[
  \mathbb E\!\left[
    \widehat{\mathcal L}_\eta F
    \,\middle|\,
    \mu_\eta
  \right]
  =
  (\mathcal L_{\hat b,\hat\Sigma}^{\pi_\omega}F)(\mu_\eta).
\]
Similarly, $r_{\lambda,\eta}$ is the corresponding sample contribution
for the entropy-regularized reward. Therefore, the model-free approximation of the MF-PhiBE is imposed at
the sampled laws $\{\mu_\eta\}_{\eta=1}^{N_D}$ as
\begin{align}\label{eq:empirical_pde_mfphibe}
  \widehat{\mathcal L}_\eta \hat{\mathcal V}^{\pi_\omega}
  -
  \beta\hat{\mathcal V}^{\pi_\omega}(\mu_\eta)
  +
  r_{\lambda,\eta}
  =
  0,
  \qquad
  \eta=1,\dots,N_D.
\end{align}
This is a sampled, model-free counterpart of the MF-PhiBE. The operator $\widehat{\mathcal L}_\eta$ is not a pointwise generator, but a sample contribution whose conditional expectation recovers the averaged estimated generator. The empirical equation \eqref{eq:empirical_pde_mfphibe} can be handled with several critic approximation schemes, including nonlinear function approximation by neural networks; see, for instance, \cite{pham2025actor}. For simplicity and illustration, we solve \eqref{eq:empirical_pde_mfphibe} by a Galerkin projection, so that the value functional is approximated by a finite-dimensional ansatz. The resulting finite-dimensional critic is related to linear function approximation, finite-dimensional approximations, and Q-learning methods for mean-field control \cite{BayraktarKaraJMLR,BayraktarKaraBauerle}; here, however, it is used to approximate the continuous-time MF-PhiBE critic. Let
$\{\phi_1,\dots,\phi_n\}$ be smooth functions on
$\mathcal P_2(\mathbb R^d)$, and define
$\mathbb V_n:=\operatorname{span}\{\phi_1,\dots,\phi_n\}$ and
$\Phi(\mu):=(\phi_1(\mu),\dots,\phi_n(\mu))^\top$. We approximate the
value functional $\hat{\mathcal V}^{\pi_\omega}$ by
\begin{equation}\label{eq:critic-linear-ansatz-mkv}
  \mathcal V_\theta(\mu)
  :=
  \theta^\top\Phi(\mu)
  =
  \sum_{i=1}^n\theta_i\phi_i(\mu),
\end{equation}
with $\theta\in\mathbb R^n$. The empirical Galerkin formulation consists
in finding $\theta^*(\omega)\in\mathbb R^n$ such that
\begin{equation}\label{eq:empirical-galerkin-mkv}
  \sum_{\eta=1}^{N_D}
  w_\eta
  \Bigl[
    \widehat{\mathcal L}_\eta\mathcal V_{\theta^*(\omega)}
    -
    \beta\mathcal V_{\theta^*(\omega)}(\mu_\eta)
    +
    r_{\lambda,\eta}
  \Bigr]
  \phi_j(\mu_\eta)
  =
  0,\qquad\text{for every }j=1,\dots,n.
\end{equation}
As described in \cite[Section~5]{firstpaper}, a convenient choice for
the basis functions is given by cylindrical features of the law. Specifically, let
$\zeta_1,\dots,\zeta_K\in C_b^2(\mathbb R^d)$ and define
\[
  m(\mu)
  :=
  \bigl(m_1(\mu),\dots,m_K(\mu)\bigr),
  \qquad
  m_k(\mu)
  :=
  \int_{\mathbb R^d}\zeta_k(\xi)\,\mu(d\xi).
\]
In particular, if the basis functions defining $\mathbb V_n$ are chosen
as cylindrical functions, namely
$\phi_i(\mu)=\psi_i(m(\mu))$, with
$\psi_i:\mathbb R^K\to\mathbb R$ smooth, then
\[
  \partial_\mu\phi_i(\mu)(\xi)
  =
  \sum_{k=1}^K
  \partial_{m_k}\psi_i(m(\mu))\,\nabla\zeta_k(\xi),
  \qquad
  D_\xi\partial_\mu\phi_i(\mu)(\xi)
  =
  \sum_{k=1}^K
  \partial_{m_k}\psi_i(m(\mu))\,D^2\zeta_k(\xi).
\]
Then, setting $m_\eta:=m(\mu_\eta)$ for $\eta=1,\dots,N_D$, equation
\eqref{eq:empirical-galerkin-mkv} can be written as the linear system
\begin{equation}\label{eq:critic-linear-system-mkv}
  A(\omega)\theta^*(\omega)=b(\omega),
\end{equation}
where
\[
\begin{aligned}
  A_{ji}(\omega)
  :=
  \sum_{\eta=1}^{N_D}
  w_\eta
  \psi_j(m_\eta)
  \bigg(
    \beta\psi_i(m_\eta)
    &-
    \widehat b_\eta\cdot
    \sum_{k=1}^K
    \partial_{m_k}\psi_i(m_\eta)\nabla\zeta_k(s_\eta)-
    \frac12\widehat\Sigma_\eta:
    \sum_{k=1}^K
    \partial_{m_k}\psi_i(m_\eta)D^2\zeta_k(s_\eta)
  \bigg),
\end{aligned}
\]
and
\[
  b_j(\omega)
  :=
  \sum_{\eta=1}^{N_D}
  w_\eta\,
  r_{\lambda,\eta}\,
  \psi_j(m_\eta).
\]
Whenever $A(\omega)$ is nonsingular,
\eqref{eq:critic-linear-system-mkv} defines
$\theta^*(\omega)=A(\omega)^{-1}b(\omega)$ and hence the critic
$\hat{\mathcal V}^{\pi_\omega}_n
:=
\mathcal V_{\theta^*(\omega)}$. 
\subsection{Policy gradient}
\label{subsec:actor-algorithm}

We now update the policy parameter using the policy gradient formula in
\Cref{thm:parametric-policy-gradient}. For the parametric policy
$\pi_\omega$, the exact gradient direction is expressed in terms of the
instantaneous advantage function $q^{\pi_\omega}$ and the score
$\nabla_\omega\log p_{\pi_\omega}$. In the model-free implementation,
$q^{\pi_\omega}$ is replaced by a sample approximation computed from the
critic $\hat{\mathcal V}^{\pi_\omega}_n$ obtained in the policy
evaluation step.

For each sample $\eta=1,\dots,N_D$, define
\begin{align}\label{eq:sample_q_function}
      \widehat q_\eta^\omega
  :=
  r_{\lambda,\eta}
  +
  \widehat b_\eta\cdot
  \partial_\mu\hat{\mathcal V}^{\pi_\omega}_n(\mu_\eta)(s_\eta)
  +\frac12\,\widehat\Sigma_\eta:
D_\xi\partial_\mu
\hat{\mathcal V}^{\pi_\omega}_n(\mu_\eta)(s_\eta)
-\beta\hat{\mathcal V}^{\pi_\omega}_n(\mu_\eta).
\end{align}
This quantity is the model-free approximation of the instantaneous
advantage $q^{\pi_\omega}(\mu_\eta,s_\eta,a_\eta)$, obtained by
replacing the exact value functional by the critic
$\hat{\mathcal V}^{\pi_\omega}_n$ and the infinitesimal coefficients by
the one-step estimates. Consequently, the empirical policy-gradient direction reads as
\begin{equation}\label{eq:model-free-gradient-mkv}
  \widehat{\nabla J}(\omega)
  :=
  \sum_{\eta=1}^{N_D}
  w_\eta\,
  \widehat q_\eta^\omega\,
  \nabla_\omega
  \log p_{\pi_\omega}(a_\eta\mid s_\eta,\mu_\eta).
\end{equation}
Following \Cref{thm:parametric-policy-gradient}, the weights in \eqref{eq:model-free-gradient-mkv} should be taken as $w_\eta=e^{-\beta t_\eta}\Delta t$. In the infinite-horizon setting, the discount factor turns the time-occupancy measure into a finite measure. In numerical implementations, however, the data are collected over a finite time horizon. Since the time integral is then already truncated, one may also use uniform weights $w_\eta=1/N_D$ and interpret the resulting direction as a finite-horizon empirical surrogate of the discounted policy gradient. 

Finally, in the cylindrical case, with $m_\eta=m(\mu_\eta)$, the sample
advantage can be written explicitly as
\begin{align}\label{eq:advantage_func}
\begin{aligned}
  \widehat q_\eta^\omega
  =
  r_{\lambda,\eta}
  +
  \sum_{i=1}^n\theta_i^*(\omega)
  \bigg[
\widehat b_\eta\cdot
\sum_{k=1}^K
\partial_{m_k}\psi_i(m_\eta)\nabla\zeta_k(s_\eta)
+\frac12\widehat\Sigma_\eta:
\sum_{k=1}^K
\partial_{m_k}\psi_i(m_\eta)D^2\zeta_k(s_\eta)
-\beta\psi_i(m_\eta)
\bigg].
\end{aligned}
\end{align}
Therefore, in practice, the actor step only requires the
finite-dimensional features $m_\eta$, the one-step transition estimates,
the entropy-regularized observed rewards, and the score of the policy.
\subsubsection{Some gradient-based algorithms}\label{subsec:some_gb_algo}

For completeness, we summarize standard policy-gradient variants that can be used in the improvement step with the estimator \eqref{eq:model-free-gradient-mkv}. All these variants can be used in the policy-improvement step without changing the MF-PhiBE critic. 

Let us introduce the corresponding Fisher information matrix, formally given by
\[
  F(\omega)
  :=
  \int_{\mathcal P_2(\mathbb R^d)\times\mathbb R^d}
  \int_{\mathcal A}
  \nabla_\omega\log p_{\pi_\omega}(a\mid \xi,\eta)
  \nabla_\omega\log p_{\pi_\omega}(a\mid \xi,\eta)^\top
  \pi_\omega(da\mid \xi,\eta)\,
  \rho_{\mu_0}^{\pi_\omega}(d\eta,d\xi),
\]
where $\rho_{\mu_0}^{\pi_\omega}$ is the discounted occupancy measure introduced in
\Cref{def:discounted_ocupacy_MKV}.   In practice, $F(\omega)$ is replaced by the corresponding empirical weighted estimator computed from the same samples used in \eqref{eq:model-free-gradient-mkv}.

\smallskip

\noindent\textbf{Vanilla policy gradient.}
The parameters are updated by gradient ascent,
\[
  \omega_{k+1}=\omega_k+\alpha_k \widehat{\nabla J}(\omega_k),
\]
where $\alpha_k>0$ is a learning rate.

\smallskip

\noindent\textbf{Natural policy gradient.}
The gradient is preconditioned by the Fisher metric,
\[
  \omega_{k+1}=\omega_k+\alpha_k F(\omega_k)^{-1}\widehat{\nabla J}(\omega_k).
\]
If $F(\omega_k)$ is singular or ill-conditioned, one may replace $F(\omega_k)^{-1}$ by a regularized inverse, for instance $(F(\omega_k)+\varepsilon I)^{-1}$ with $\varepsilon>0$.

\smallskip

\noindent\textbf{Trust region policy optimization.}
A trust-region step is obtained by maximizing a first-order approximation of the objective while constraining the average KL divergence between the old and new policies. In the present notation, this gives
\[
  \max_{\omega}\ \langle \widehat{\nabla J}(\omega_k),\omega-\omega_k\rangle
  \quad\text{subject to}\quad
  \int_{\mathcal P_2(\mathbb R^d)\times\mathbb R^d}
  \mathrm{KL}\!\left(
    \pi_{\omega_k}(\cdot\mid \xi,\mu)\,\|\,\pi_{\omega}(\cdot\mid \xi,\mu)
  \right)\rho_{\mu_0}^{\pi_{\omega_k}}(d\mu,d\xi)
  \le \delta,
\]
where $\delta>0$ is the trust-region radius. Using a second-order expansion of the KL constraint around $\omega_k$ yields the quadratic subproblem
\[
  \max_{\Delta\omega}\ \langle \widehat{\nabla J}(\omega_k),\Delta\omega\rangle
  \quad\text{subject to}\quad
  \frac12\,\Delta\omega^\top F(\omega_k)\Delta\omega\le\delta.
\]
When $F(\omega_k)$ is invertible and $\widehat{\nabla J}(\omega_k)^\top F(\omega_k)^{-1}\widehat{\nabla J}(\omega_k)>0$, the corresponding step is
\[
  \Delta\omega_k
  =
  \sqrt{\frac{2\delta}{\widehat{\nabla J}(\omega_k)^\top F(\omega_k)^{-1}\widehat{\nabla J}(\omega_k)}}\,
  F(\omega_k)^{-1}\widehat{\nabla J}(\omega_k),
  \qquad
  \omega_{k+1}=\omega_k+\Delta\omega_k.
\]

\smallskip

\noindent\textbf{Proximal policy optimization.}
PPO replaces the hard KL constraint by a clipped surrogate objective. Given an old parameter $\omega_{\mathrm{old}}$, define the likelihood ratio
\[
  R_\omega(\xi,\mu,a)
  :=
  \frac{p_{\pi_\omega}(a\mid \xi,\mu)}
       {p_{\pi_{\omega_{\mathrm{old}}}}(a\mid \xi,\mu)}.
\]
Using the sample advantage $\widehat q_\eta^{\omega_{\mathrm{old}}}$ from \eqref{eq:model-free-gradient-mkv}, the empirical clipped objective takes the form
\[
  L^{\mathrm{CLIP}}(\omega)
  :=
  \sum_{\eta=1}^{N_D}w_\eta
  \min\!\left\{
    R_\omega(s_\eta,\mu_\eta,a_\eta)\,\widehat q_\eta^{\omega_{\mathrm{old}}},
    \operatorname{clip}\!\left(R_\omega(s_\eta,\mu_\eta,a_\eta),1-\varepsilon,1+\varepsilon\right)\widehat q_\eta^{\omega_{\mathrm{old}}}
  \right\},
\]
where $\varepsilon>0$ is the clipping parameter. The policy is then updated by approximately maximizing $L^{\mathrm{CLIP}}$ over one or several optimization steps.

\begin{algorithm}[H]
\caption{Model-free actor-critic iteration based on MF-PhiBE}
\label{alg:model-free-mfphibe-actor-critic}
\begin{algorithmic}[1]
\State Choose a parametrized randomized policy family
$\{\pi_\omega\}_{\omega\in\mathbb R^p}$.
\State Choose a finite-dimensional critic space
$\mathbb V_n=\operatorname{span}\{\phi_1,\dots,\phi_n\}$ on
$\mathcal P_2(\mathbb R^d)$, preferably using cylindrical features as in
\Cref{subsec:critic-algorithm}.
\State Initialize the actor parameter $\omega^{(0)}$.
\For{$k=0,1,\dots,K-1$}
    \State \textbf{Data collection:}
    collect an on-policy dataset
    \[
    \mathcal D_k
    =
    \bigl\{
    (s_\eta,\mu_\eta,a_\eta,s_\eta',r_\eta)
    \bigr\}_{\eta=1}^{N_D}
    \]
    under the policy $\pi_{\omega^{(k)}}$.
    \State Compute the one-step estimates
    $\widehat b_\eta$, $\widehat\Sigma_\eta$ and the
    entropy-regularized rewards $r_{\lambda,\eta}$ as in
    \eqref{eq:coef_est}.
    \State Choose empirical weights $w_\eta$, for instance
    $w_\eta=e^{-\beta t_\eta}\Delta t$ or
    $w_\eta=1/N_D$.
    \State \textbf{Policy evaluation:}
    solve the empirical MF-PhiBE residual \eqref{eq:empirical_pde_mfphibe}, for instance through the Galerkin linear system \eqref{eq:critic-linear-system-mkv}, and define the critic
    \[
    \hat{\mathcal V}^{\pi_{\omega^{(k)}}}_n
    :=
    \mathcal V_{\theta^{(k)}}.
    \]
    \State \textbf{Policy improvement:}
    construct the sample advantages
    $\widehat q_\eta^{\omega^{(k)}}$ using \eqref{eq:sample_q_function} and
    $\hat{\mathcal V}^{\pi_{\omega^{(k)}}}_n$.
    \State Estimate the policy-gradient direction
    $\widehat{\nabla J}(\omega^{(k)})$ using
    \eqref{eq:model-free-gradient-mkv}.
    \State Update the actor parameter, for example by vanilla policy-gradient,
    \[
    \omega^{(k+1)}
    =
    \omega^{(k)}
    +
    \alpha_k\,\widehat{\nabla J}(\omega^{(k)}).
    \]
    Alternatively, replace this step with a natural policy-gradient, TRPO, PPO, or any other gradient-based update.
\EndFor
\State Return the final policy $\pi_{\omega^{(K)}}$ and the associated
critic $\hat{\mathcal V}^{\pi_{\omega^{(K)}}}_n$.
\end{algorithmic}
\end{algorithm}

\section{Numerical examples}\label{sec:numerics}

\subsection{Example 1: Model-free systemic risk}
\label{subsec:numerics-lqr-phibe}

We first consider the systemic-risk specialization of the LQR McKean-Vlasov control problem studied in \Cref{sec:LQR}. This example admits an explicit Riccati characterization of the continuous-time optimal policy and therefore provides a benchmark to compare our algorithm.

\subsubsection{Problem setup}

We work in dimension $d=m=1$ and impose the systemic-risk relation $\bar A=-A$ in \eqref{eq:setting_lqr_for_phibe}. The controlled population dynamics are therefore
\[
  d s_t
  =
  \bigl(A(s_t-\bar\mu_t)+B a_t\bigr)\,dt
  +
  \gamma\,dB_t,
  \qquad
  \bar\mu_t:=\int_{\mathbb R}x\,\mu_t(dx),
  \qquad
  \mu_t=\Law(s_t).
\]
The running reward is the quadratic reward introduced in \eqref{eq:reward_LQR_simplify}, and the entropy-regularized averaged reward is defined by \eqref{eq:reg_reward_avg_mkv}. The exact optimal policy is obtained from the Riccati system in \Cref{sec:LQR}. We use this explicit solution as a reference to illustrate the model-free actor-critic algorithm introduced in \Cref{sec:algorithm}.

\subsubsection{Policy parametrization, value approximation and gradient direction}

Following the explicit LQR structure, we use the Gaussian policy class
\begin{align}\label{eq:definition_paramtric_policy_ex1}
     \pi_\omega(\cdot\mid s,\mu)
  =
  \mathcal N\left(
    \omega_1(s-\bar\mu)+\omega_2\bar\mu,\,
    \frac{\lambda}{2}N^{-1}
  \right),
  \qquad
  \omega=(\omega_1,\omega_2)\in\mathbb R^2.
\end{align}
The critic is parametrized by the quadratic law-value ansatz
\begin{align}\label{eq:value_anzatz_ex1}
      \mathcal V_\theta(\mu)
  =
  \theta_1+\theta_2\,\bar\mu^2+\theta_3\,\operatorname{Var}(\mu),
  \qquad
  \theta=(\theta_1,\theta_2,\theta_3)\in\mathbb R^3.
\end{align}
In this finite-dimensional setting, the Lions derivative of the critic is $\partial_\mu \mathcal V_\theta(\mu)(s)
  =
  2\theta_2\bar\mu
  +
  2\theta_3(s-\bar\mu).$ As in the LQR-MF-PhiBE approximation considered in \Cref{sec:LQR}, we use the drift-only surrogate and set $\widehat\Sigma=0$ in the empirical advantage. Thus, for a data point $(s_\eta,\mu_\eta,a_\eta,s'_\eta,r_\eta)$, with $\bar\mu_\eta=\int x\,\mu_\eta(dx)$, the sample advantage used in the actor update is
\begin{align}\label{eq:explicit_q_LQR}
      \widehat q_\eta
  =
  r_\eta
  -
  \lambda\log p_{\pi_\omega}(a_\eta\mid s_\eta,\mu_\eta)
  +\widehat b_\eta
\left(2\theta_2\bar\mu_\eta+2\theta_3(s_\eta-\bar\mu_\eta)\right)
-\beta\left(
\theta_1+\theta_2\bar\mu_\eta^2
+\theta_3\operatorname{Var}(\mu_\eta)
\right),
\end{align}
 with $\hat{b}$ defined in \eqref{eq:coef_est}. Since the policy variance is fixed, the score function is
\begin{align}\label{eq:score_explicit_lqr}
  \nabla_\omega\log p_{\pi_\omega}(a_\eta\mid s_\eta,\mu_\eta)
  =
  \frac{a_\eta-\omega_1(s_\eta-\bar\mu_\eta)-\omega_2\bar\mu_\eta}
       {\lambda(2N)^{-1}}
  \begin{pmatrix}
    s_\eta-\bar\mu_\eta\\
    \bar\mu_\eta
  \end{pmatrix}.
\end{align}
Hence the empirical policy-gradient direction used in the experiment is computed by substituting \eqref{eq:explicit_q_LQR} and \eqref{eq:score_explicit_lqr} into \eqref{eq:model-free-gradient-mkv}, with either discounted weights $w_\eta=e^{-\beta t_\eta}\Delta t$ or uniform weights $w_\eta=1/N_D$.

\subsubsection{Implementation and numerical results}
Since the policy parametrization in \eqref{eq:definition_paramtric_policy_ex1} and the value-function ansatz \eqref{eq:value_anzatz_ex1} depend on the law only through $\bar\mu_\eta$ and $\operatorname{Var}(\mu_\eta)$, in this example we do not need a full empirical representation of $\mu_\eta$. Hence, for a fixed policy $\pi_\omega$, we assume that the available data set is
\[
 \mathcal D_\omega
  =
  \left\{
  \bigl(
  s_\ell^i,\bar\mu^i,\operatorname{Var}(\mu^i),
  a_\ell^i,r_\ell^i,(s_\ell^i)'
  \bigr)
  :
  i=1,\ldots,N_{\rm steps},\ \ell=1,\ldots,L
  \right\}.
\]
Here $N_{\rm steps}$ is the number of observed one-step transitions along each trajectory, and $L$ is the number of independent trajectories, obtained from independent samples of the initial law. We have $s_\ell^i\sim\mu^i$, $a_\ell^i\sim\pi_\omega(\cdot\mid s_\ell^i,\mu^i)$, $r_\ell^i=r(s_\ell^i,\mu^i,a_\ell^i)$, and $(s_\ell^i)'$ denotes the state observed after one time step $\Delta t$.

To reduce artificial sources of noise, the data are generated exactly from the LQR dynamics, thereby avoiding an additional numerical time-discretization error. More precisely, $(s_\ell^i)'$ is sampled from the explicit transition law \eqref{eq:LQR-phibe-transition-law}, while the moments of the law are propagated through the closed equations associated with the current policy: $\dot{\bar\mu}_t=(A+\bar A+B\omega_2)\bar\mu_t$ and $\frac{d}{dt}\operatorname{Var}(\mu_t)=2(A+B\omega_1)\operatorname{Var}(\mu_t)+\gamma^2$.

In the numerical experiment we use $N_{\rm steps}=50$, $L=200$, and $\Delta t=0.05$. The numerical parameters are
\begin{align}\label{eq:parameters_example1}
      A=-1,\quad
  \bar A=1,\quad
  B=1,\quad
  Q=1,\quad
  \bar Q=1,\quad
  N=0.5,\quad\beta=1,\quad
  \gamma=0.5,\quad
  \lambda=0.2.
\end{align}
The initial law is $\mu=\mathcal N(1,1)$. The empirical estimators entering \eqref{eq:explicit_q_LQR}, \eqref{eq:score_explicit_lqr} and \eqref{eq:model-free-gradient-mkv} are computed from $\mathcal D_\omega$.
\begin{figure}
    \centering
    \includegraphics[width=0.32\linewidth]{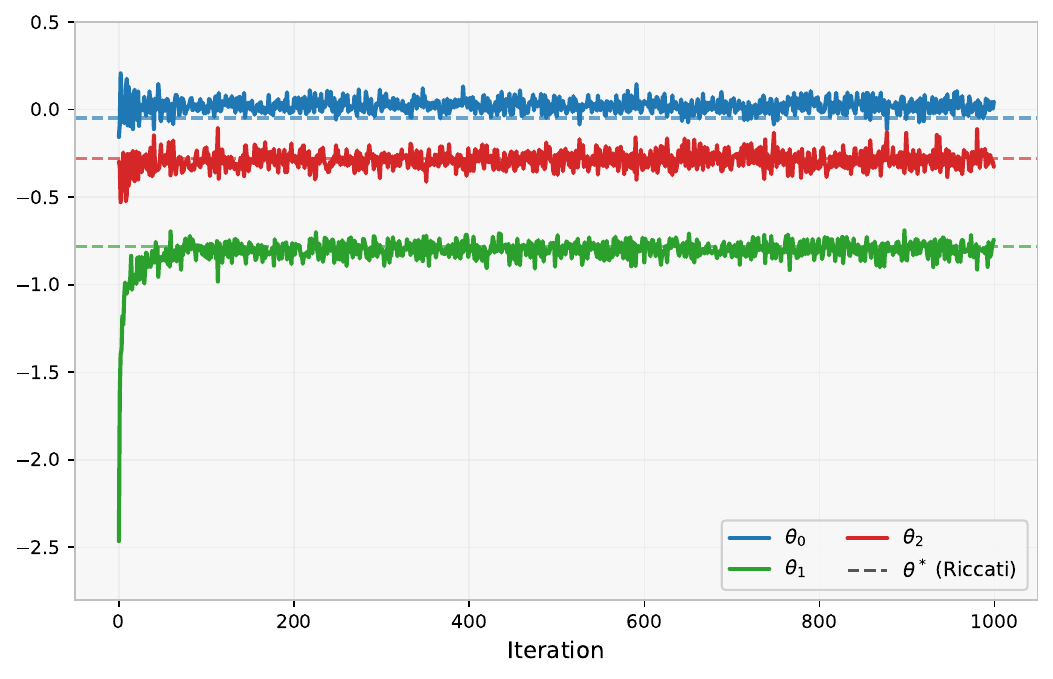}
    \includegraphics[width=0.32\linewidth]{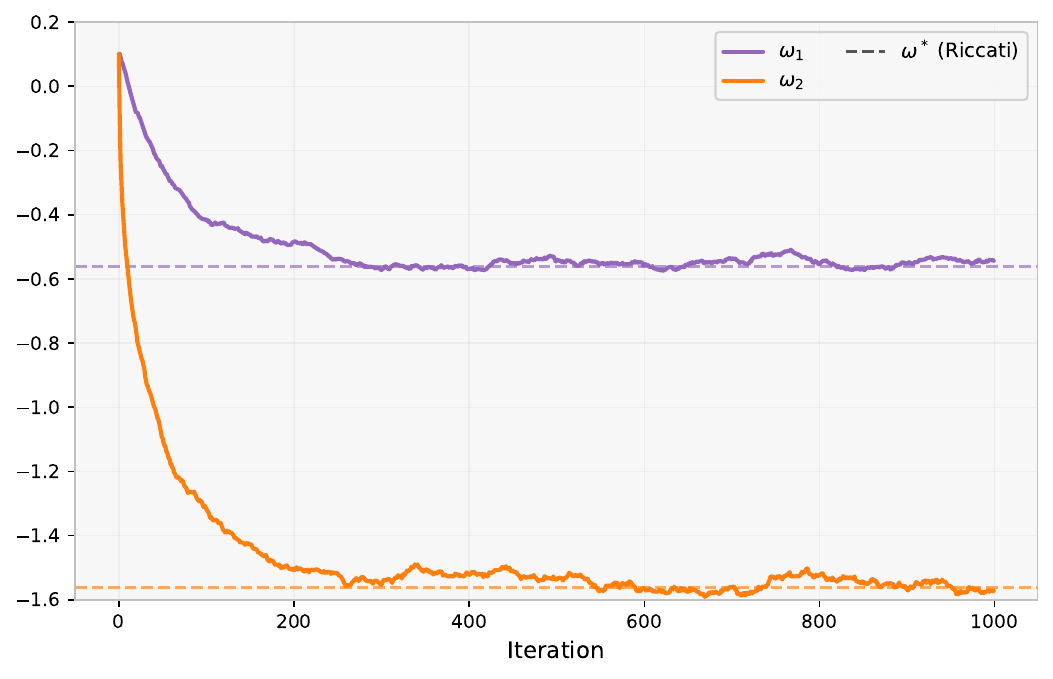}
    \includegraphics[width=0.32\linewidth]{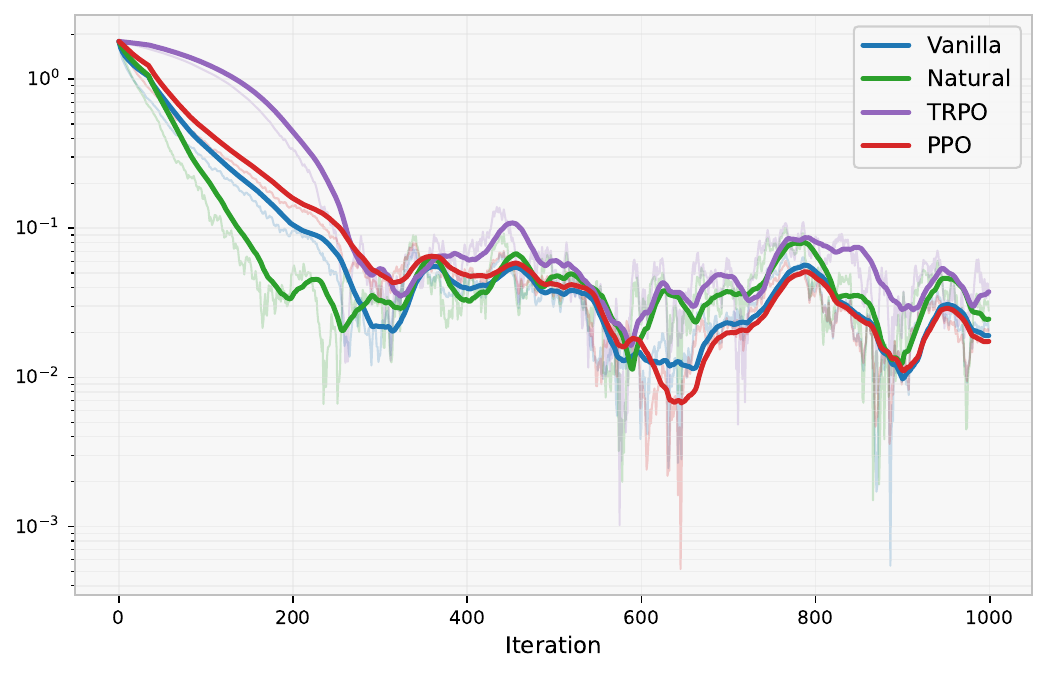}
    \caption{
    Model-free LQR-MF-PhiBE actor-critic convergence. Left: critic parameters obtained from the empirical MF-PhiBE evaluation, compared with the Riccati reference. Center: actor parameters produced by the vanilla policy-gradient update, compared with the optimal feedback parameters. Right: policy-parameter error for different update rules.
    }
    \label{fig:lqr-phibe-actor-critic-convergence}
\end{figure}

\Cref{fig:lqr-phibe-actor-critic-convergence} illustrates the behavior of the actor-critic scheme in the LQR benchmark when the discounted occupancy measure $w_\eta=e^{-\beta t_\eta}\Delta t$ is used. The left panel shows that the empirical critic recovers the quadratic value structure predicted by the Riccati solution. The center panel shows that the actor parameters approach the optimal feedback coefficients. The right panel compares several update rules, namely vanilla gradient descent, natural gradient descent, TRPO, and PPO; see \Cref{subsec:some_gb_algo}. This panel shows a consistent reduction of the policy-parameter error, with residual oscillations caused by the finite data set used at each update. Thus, the experiment confirms that the empirical MF-PhiBE critic provides a usable policy-gradient direction for recovering the continuous-time LQR feedback. Similarly, the left and center panels of \Cref{fig:lqr-phibe-sample-complexity} show analogous results for the uniform finite-horizon empirical weights $w_\eta=1/N_D$, where convergence is also observed.

\begin{figure}
    \centering
    \includegraphics[width=0.32\linewidth]{images_paper_2/example1/fig3_phibe_theta_convergence_pure_mkv_vanilla.pdf}\includegraphics[width=0.32\linewidth]{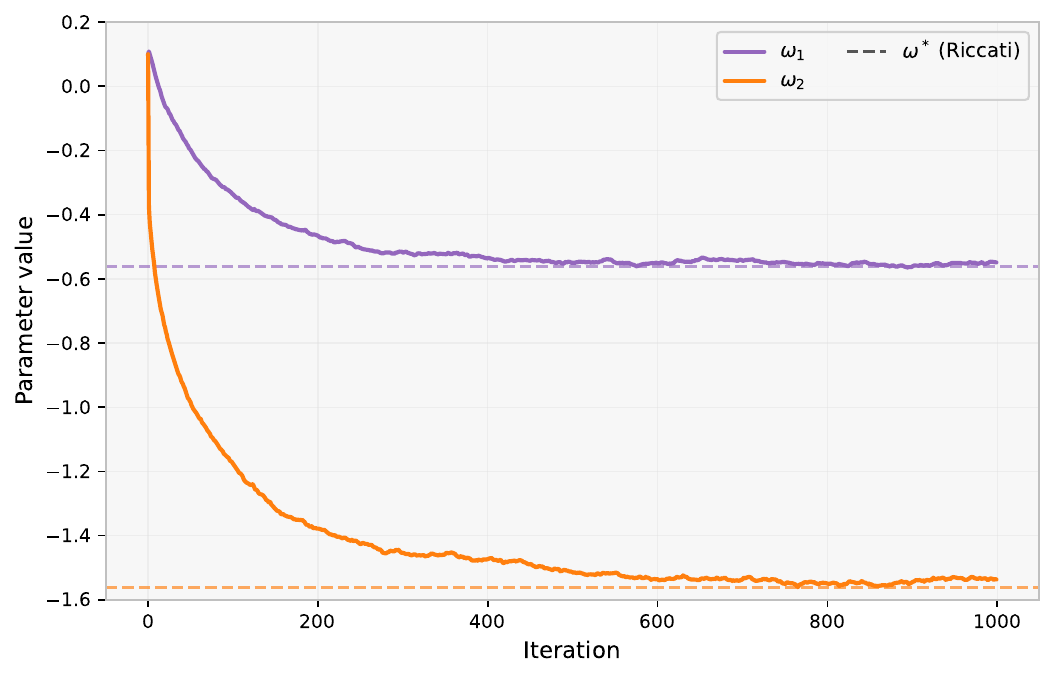}\includegraphics[width=0.32\linewidth]{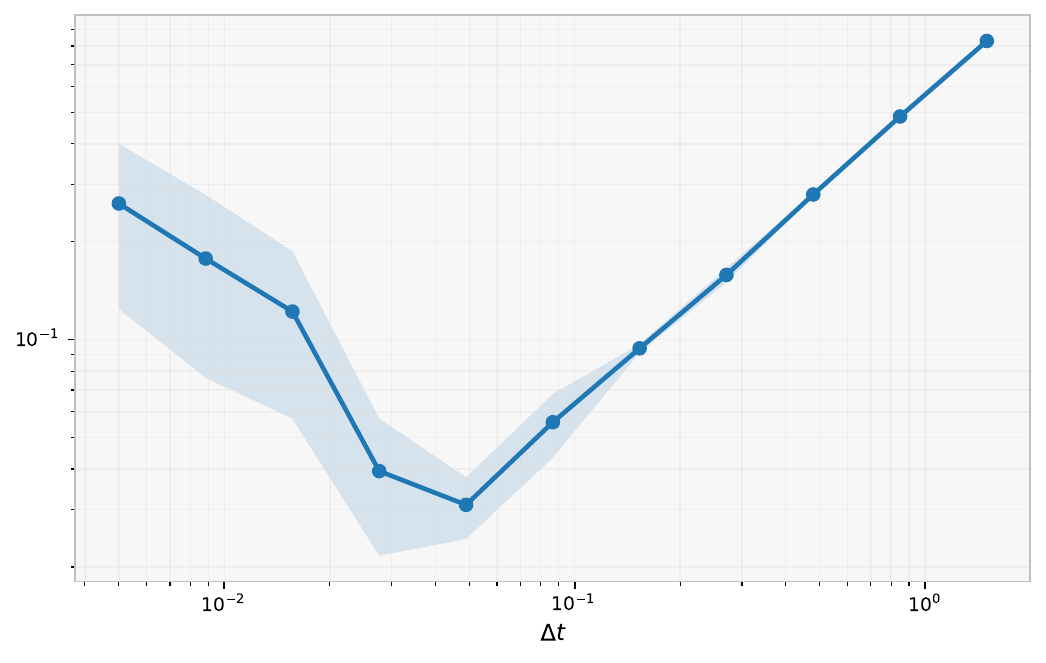}
    \caption{Uniform-weight finite-horizon performance and sample-complexity effect. Left: critic parameters obtained from the empirical MF-PhiBE using the undiscounted occupancy measure. Center: actor parameters produced by the vanilla policy-gradient update using the undiscounted occupancy measure. Right: continuous-time value error as a function of the observation step $\Delta t$, using a fixed data budget $N_D=4800$.}
    \label{fig:lqr-phibe-sample-complexity}
\end{figure}

The right panel of \Cref{fig:lqr-phibe-sample-complexity} shows the effect of decreasing $\Delta t$ while keeping the total number of observations fixed. For large and moderate values of $\Delta t$, reducing the observation step improves the approximation, as expected from the consistency of the one-step MF-PhiBE estimator. However, once $\Delta t$ becomes too small, the error starts increasing. In this regime the dominant error is no longer the finite-$\Delta t$ approximation error, but the statistical error produced by the fixed data budget. Thus, the curve exhibits a threshold beyond which more accurate time resolution is not beneficial unless the number of samples is also increased.

\subsubsection{Comparison with the time-discrete Bellman problem}
\label{subsubsec:numerics-phibe-vs-discretization}

The previous experiment evaluates the LQR-MF-PhiBE approximation of the
continuous-time control problem. We now compare this construction with a
different way of using observations at time step $\Delta t$: instead of
incorporating the one-step information into the continuous-time HJB
structure, one may first discretize the controlled dynamics and then solve
the resulting time-discrete Bellman problem.

Let $\rho_{\Delta t}(\cdot\mid s,\mu,a)$ denote the one-step transition
kernel induced by \eqref{eq:LQR-phibe-transition-law}. The corresponding
discretized entropy-regularized objective is
\[
  \mathcal V_{\Delta t}^{\pi}(\mu)
  :=
  \mathbb E\left[
    \sum_{i=0}^{\infty}
    e^{-\beta i\Delta t}\,
    \Delta t\, r_\lambda^\pi(\mu_i)
    \,\bigg|\,
    \mu_0=\mu
  \right],
\]
where the controlled Markov chain is defined by
\[
  s_{i+1}\sim \rho_{\Delta t}(\cdot\mid s_i,\mu_i,a_i),
  \qquad
  a_i\sim\pi(\cdot\mid s_i,\mu_i),
  \qquad
  \mu_i=\Law(s_i).
\]

The discrete-time problem still admits an explicit Gaussian optimizer. Set
$\delta:=e^{-\beta\Delta t}$ and $B_{\Delta t}:=\Delta t\,\hat B$, where
$\hat B$ is the action coefficient in the one-step drift estimator
\eqref{eq:LQR-phibe-drift-estimator}. Set
\[
  A_{\Delta t}:=I+\Delta t\,\hat A,
  \qquad
  \widetilde A_{\Delta t}:=I+\Delta t(\hat A+\hat{\bar A}),
  \qquad
  B_{\Delta t}:=\Delta t\,\hat B,
  \qquad
  \delta:=e^{-\beta\Delta t}.
\]
Then, the optimal policy of the
discretization problem is given by
\[
  \pi_{\Delta t}^*(\cdot\mid s,\mu)
  =
  \mathcal N\left(
    \bar{\pi}_{\Delta t}^*(s,\mu),
    \Sigma_{\Delta t}^*
  \right),
  \qquad
  \Sigma_{\Delta t}^*
  =
  \frac{\lambda\Delta t}{2}
  \left(
    \Delta t\,N
    +
    \delta
    B_{\Delta t}^{\top}K_{\Delta t}B_{\Delta t}
  \right)^{-1},
\]
with
\begin{align*}
  \bar{\pi}_{\Delta t}^*(s,\mu)
  =
  &-
  \left[
  \left(
    \Delta t\,N
    +
    \delta
    B_{\Delta t}^{\top}K_{\Delta t}B_{\Delta t}
  \right)^{-1}
  \delta
  B_{\Delta t}^{\top}K_{\Delta t}A_{\Delta t}
  \right](s-\bar\mu)
\\
  &-
  \left[
  \left(
    \Delta t\,N
    +
    \delta
    B_{\Delta t}^{\top}\Lambda_{\Delta t}B_{\Delta t}
  \right)^{-1}
  \delta
  B_{\Delta t}^{\top}\Lambda_{\Delta t}\widetilde A_{\Delta t}
  \right]\bar\mu .
\end{align*}
Here the matrices $K_{\Delta t}$ and $\Lambda_{\Delta t}$ are the stabilizing
solutions of
\begin{align*}
K_{\Delta t}
&=
\Delta t\,Q
+
\delta
A_{\Delta t}^{\top}
K_{\Delta t}
A_{\Delta t}
-
\delta^2
A_{\Delta t}^{\top}
K_{\Delta t}B_{\Delta t}
\left(
  \Delta t\,N
  +
  \delta
  B_{\Delta t}^{\top}K_{\Delta t}B_{\Delta t}
\right)^{-1}
B_{\Delta t}^{\top}K_{\Delta t}
A_{\Delta t},
\\
\Lambda_{\Delta t}
&=
\Delta t\,(Q+\bar Q)
+
\delta
\widetilde A_{\Delta t}^{\top}
\Lambda_{\Delta t}
\widetilde A_{\Delta t}
-
\delta^2
\widetilde A_{\Delta t}^{\top}
\Lambda_{\Delta t}B_{\Delta t}
\left(
  \Delta t\,N
  +
  \delta
  B_{\Delta t}^{\top}\Lambda_{\Delta t}B_{\Delta t}
\right)^{-1}
B_{\Delta t}^{\top}\Lambda_{\Delta t}
\widetilde A_{\Delta t}.
\end{align*}
The optimal policy characterization is obtained by combining the time-discrete mean-field LQ framework of \cite{carmona2019linear} with the entropy-regularized one-step Bellman minimization of \cite{GuoLiXu2025}. The former motivates the discrete mean-field LQ structure, while the latter yields a Gaussian optimizer over randomized feedback policies, with covariance depending on $\Delta t$.

For fixed $\Delta t>0$, neither the discrete optimizer
$\pi_{\Delta t}^*$ nor the LQR-MF-PhiBE optimizer $\hat\pi^*$ is expected
to coincide exactly with the continuous-time optimizer $\pi^*$. The
question is therefore which of the two finite-$\Delta t$ approximations is
closer to $\pi^*$. To compare policies, for Gaussian feedback laws
\[
  \pi_i(\cdot\mid s,\mu)
  =
  \mathcal N\left(
    -L_c^i(s-\bar\mu)-L_m^i\bar\mu,
    \Sigma_i
  \right),
  \qquad i=1,2,
\]
we use
\begin{align}\label{eq:difinition_E_2}
    \mathcal{E}_2(\pi_1,\pi_2)
    :=
    \left(
    \|L_c^1-L_c^2\|^2
    +
    \|L_m^1-L_m^2\|^2
    +
    \|\Sigma_1^{1/2}-\Sigma_2^{1/2}\|_F^2
    \right)^{1/2}.
\end{align}
In particular, for every compact set $K\subset \mathbb R^d\times\mathbb R^d$,
there exists $C_K>0$ such that
\begin{align}\label{eq:relation_W_2_E_2}
      \sup_{(s,\bar\mu)\in K}
    \mathcal W_2\bigl(
      \pi_1(\cdot\mid s,\mu),
      \pi_2(\cdot\mid s,\mu)
    \bigr)
    \le
    C_K\mathcal{E}_2(\pi_1,\pi_2).
\end{align}

\begin{figure}
  \centering
  \includegraphics[width=\linewidth]{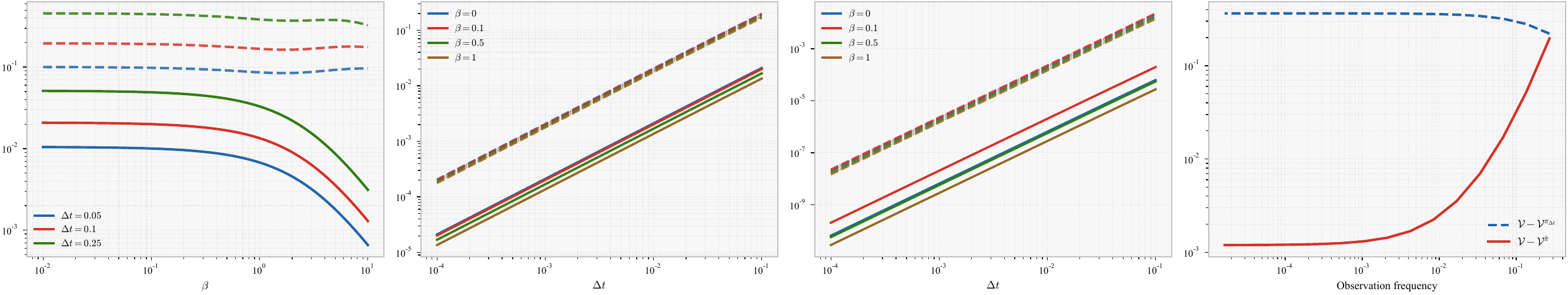}
  \caption{
  Comparison between LQR-MF-PhiBE and the time-discrete Bellman problem.
  Solid curves correspond to the LQR-MF-PhiBE approximation and dashed curves
  to the discrete-time Bellman problem. Left: policy errors
  $\mathcal{E}_2(\pi^*,\hat\pi^*)$ and
  $\mathcal{E}_2(\pi^*,\pi_{\Delta t}^*)$ as functions of the discount factor
  $\beta$, for several values of $\Delta t$. Left center: convergence of the
  policy errors as $\Delta t\to0$, for different values of $\beta$. Right center:
  continuous-time value gap
  $|\mathcal{V}^{\pi^*}-\mathcal{V}^{\pi_{\mathrm{method}}}|$ as a function
  of $\Delta t$. Right: exact continuous-time performance errors
  $\mathcal{V}-\mathcal{V}^{\hat\pi}$ and
  $\mathcal{V}-\mathcal{V}^{\pi_{\Delta t}}$ when both policies are first
  computed with $\Delta t=0.5$ and then kept fixed while the observation step
  $\Delta t_2$ is varied.
  In all panels, the errors are computed with respect to the continuous-time
  optimal value or policy.
  }
  \label{fig:lqr-phibe-vs-discrete-combined}
\end{figure}

In \Cref{fig:lqr-phibe-vs-discrete-combined}, the first panel shows how the policy error depends on the discount factor for fixed values of $\Delta t$. Throughout this experiment, we use the same parameters as in \eqref{eq:parameters_example1}. For the parameter values displayed in this experiment, the LQR-MF-PhiBE policy remains closer to $\pi^*$ than the discrete-time optimizer over the plotted range of $\beta$. The second panel confirms that both policy errors decrease as $\Delta t\to0$, as expected from the consistency of both finite-$\Delta t$ approximations. The third panel shows the same comparison at the level of the original continuous-time objective: the value gap decreases with $\Delta t$, and the relative ordering of the two methods is consistent with the policy-error plots. Finally, the fourth panel evaluates the performance of the two optimal policies as observations become more frequent. More precisely, the LQR-MF-PhiBE and discrete-time policies are computed once with $\Delta t=0.5$ and then applied to the continuous dynamics with observations taken every $h$ units of time. The plot shows that, as observations become more frequent, the LQR-MF-PhiBE policy yields a smaller exact value error than the discrete-time policy. Moreover, as $h$ decreases, the observed errors stabilize, which indicates that the remaining discrepancy is mainly due to the policy approximation itself rather than to the frequency at which the closed-loop system is observed.

Let $\operatorname{Err}_{\mathrm{PhiBE}}:=\mathcal{E}_2(\pi^*,\hat\pi^*)
$ and $\operatorname{Err}_{\mathrm{disc}}:=\mathcal{E}_2(\pi^*,\pi_{\Delta t}^*)$. To identify the parameter regimes in which one approximation is more accurate than the other, \Cref{fig:lqr-phibe-vs-discrete-heatmaps} plots the logarithmic ratio $\log(\operatorname{Err}_{\mathrm{PhiBE}}/\operatorname{Err}_{\mathrm{disc}})$. Negative values mean that LQR-MF-PhiBE has the smaller policy error, while positive values mean that the optimizer of the time-discrete Bellman problem is closer to $\pi^*$. The black curve marks the zero level set, where both errors coincide. The heatmaps in \Cref{fig:lqr-phibe-vs-discrete-heatmaps} show that the relative accuracy depends on the parameter regime. In the displayed experiments, LQR-MF-PhiBE is more accurate in a large part of the parameter range, whereas direct discretization becomes more accurate only in the regime when $A$ and $B$ are small enough. This confirms that the time-discrete Bellman problem and LQR-MF-PhiBE define two different finite-$\Delta t$ approximations of the same continuous-time optimizer.

\begin{figure}
  \centering
  \includegraphics[width=\linewidth]{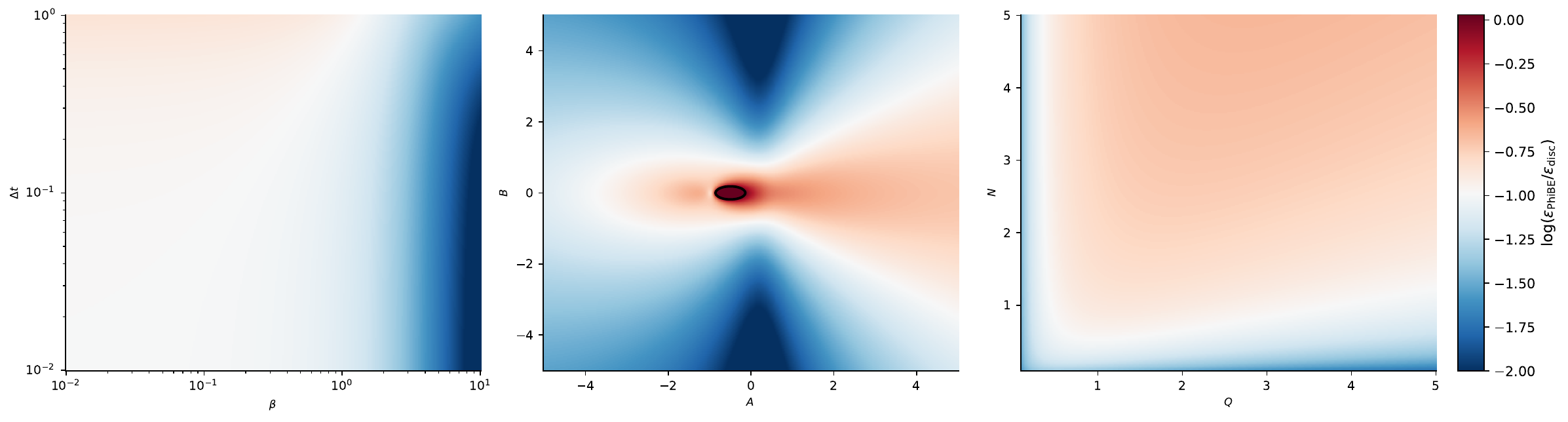}
  \caption{
  Parameter regimes for the comparison between LQR-MF-PhiBE and direct
  discretization. Left: logarithmic error ratio in the $(\beta,\Delta t)$
  plane. Center: logarithmic error ratio in the $(A,B)$ plane. Right:
  logarithmic error ratio in the $(Q,N)$ plane. The black line denotes the
  zero level set.
  }
  \label{fig:lqr-phibe-vs-discrete-heatmaps}
\end{figure}
\subsection{Example 2: Crowd aversion transport}
\label{subsec:numerics-crowd-aversion}

We next test the model-free MF-PhiBE actor-critic method on a nonlinear two-dimensional McKean-Vlasov control problem. The objective is to transport the population toward a prescribed target location while penalizing concentration in regions of high crowd density. This experiment is formulated directly at the level of the population law.

\subsubsection{Problem setup}
\label{subsec:crowd-problem-setup}

The controlled population law $\mu_t$ is generated by
\[
ds_t=m_\omega(s_t,\mu_t)\,dt+\sigma\,dW_t,
\qquad
m_\omega(s,\mu):=
\int_{\mathbb R^2}a\,\pi_\omega(da\mid s,\mu),
\]
where $s_t\in\mathbb R^2$, $a_t\in\mathbb R^2$, and $\sigma>0$ is fixed. The running reward is
\[
r(s,\mu,a)
=
-\left(
\frac{1}{2}\|a\|^2
+
\frac{\kappa}{2}\|s-s_{\rm tar}\|^2
+
\gamma (K_\eta * \mu)(s)
+
\frac{\rho}{2}\|s\|^2
\right).
\]
Here $s_{\rm tar}\in\mathbb R^2$ is the target, $\kappa>0$ controls target attraction, $\gamma>0$ controls crowd aversion, and $\rho\geq 0$ is a mild confinement parameter used for numerical stability. The crowd-density term is
\[
(K_\eta * \mu)(s)
=
\int_{\mathbb R^2}
\exp\left(-\frac{\|s-y\|^2}{2\eta^2}\right)\mu(dy),
\]
where $\eta>0$ is the kernel bandwidth. We consider the entropy-regularized reward introduced in \eqref{eq:reg_reward_avg_mkv}. The law-level objective is the discounted infinite-horizon value $\mathcal V^{\pi_\omega}$ in \eqref{eq:value_fn_mkv}.

\begin{remark}[Interpretation of the reward]\label{remark:interpretation_reward_crowd}
The four terms in the running cost correspond, respectively, to control effort, target attraction, crowd aversion, and mild confinement. Since the controlled equation is linear and the control acts directly as a velocity field, the learned policy is expected to transport the population toward $s_{\rm tar}$ while avoiding excessive concentration of mass, with the confinement term preventing particles from drifting too far from the target region.
\end{remark}

\subsubsection{Policy and value approximation}
\label{subsec:crowd-policy-critic}

Motivated by the crowd-density term in the reward, we use Gaussian kernel features of the population law. Let $c_1,\ldots,c_{M_\mu}\in\mathbb R^2$ be grid centers and let $h>0$ be a bandwidth. We define
\[
u_j(\mu)
=
\int_{\mathbb R^2}
\exp\left(-\frac{\|y-c_j\|^2}{2h^2}\right)\mu(dy),
\qquad
j=1,\ldots,M_\mu.
\]
We use a Gaussian policy with fixed exploration covariance,
\[
\pi_\omega(\cdot\mid s,\mu)
=
\mathcal N\left(m_\omega(s,\mu),\sigma_a^2 I_2\right),
\qquad
m_\omega(s,\mu)=W f(s,\mu),
\qquad
\omega\equiv W\in\mathbb R^{2\times d_f},
\]
where the feature vector $f(s,\mu)$ is given by
\[
f(s,\mu)
=
\Bigl(
1,\,
s,\,
s-s_{\rm tar},\,
\|s-s_{\rm tar}\|^2,\,
u_1(\mu),\ldots,u_{M_\mu}(\mu)
\Bigr)^\top .
\]
Thus, in this two-dimensional example, the number of actor features is $d_f=6+M_\mu$. For the critic, we use the cylindrical approximation
\[
\mathcal V_\theta(\mu)=\theta^\top \Phi(u(\mu)),
\qquad
u(\mu)=(u_1(\mu),\ldots,u_{M_\mu}(\mu)).
\]
The feature map $\Phi$ contains the constant, the linear terms in $u$, and all quadratic products:
\[
\Phi(u)
=
\Bigl(
1,\,
u_1,\ldots,u_{M_\mu},\,
\{u_i u_j\}_{1\leq i\leq j\leq M_\mu}
\Bigr)^\top .
\]
Thus, the critic has $1+M_\mu+M_\mu(M_\mu+1)/2$ features. The Lions derivatives $\partial_\mu \mathcal V_\theta(\mu)(x)$ and $D_x\partial_\mu \mathcal V_\theta(\mu)(x)$ are computed analytically from this cylindrical representation.

\subsubsection{Empirical MF-PhiBE critic and policy improvement}
\label{subsec:crowd-law-critic}

At policy iteration $k$, we simulate $L$ independent empirical population trajectories, each consisting of $M$ particles and $N_{\rm steps}$ time steps. For trajectory $\ell$ and time index $i$, the empirical law is
\[
\widehat\mu_\ell^i
=
\frac{1}{M}\sum_{j=1}^M \delta_{s_{\ell,j}^i}.
\]
Given $\widehat\mu_\ell^i$, we sample $a_{\ell,j}^i
\sim
\pi_{\omega_k}(\cdot\mid s_{\ell,j}^i,\widehat\mu_\ell^i),
\qquad j=1,\ldots,M,$ and generate the frozen-action transitions
\[
(s_{\ell,j}^i)'
=
s_{\ell,j}^i+a_{\ell,j}^i\Delta t
+\sigma\sqrt{\Delta t}\,Z_{\ell,i,j},
\qquad
Z_{\ell,i,j}\sim\mathcal N(0,I_2).
\]
Because the pointwise controlled drift in this example is
$b(s,\mu,a)=a$ and the diffusion is constant, this frozen-action
transition is exact and introduces no additional time-discretization
error. The data available at policy iteration $k$ are
\[
 \mathcal D_{\omega_k}
  =
  \left\{
  \bigl(
  s_{\ell,j}^i,\widehat\mu_\ell^i,
  a_{\ell,j}^i,r_{\ell,j}^i,(s_{\ell,j}^i)'
  \bigr)
  :
  i=1,\ldots,N_{\rm steps},\ \ell=1,\ldots,L,\ j=1,\ldots,M
  \right\}.
\]
The critic parameter $\theta$ is obtained from the Galerkin problem \eqref{eq:critic-linear-system-mkv} using the one-step coefficient estimators $\widehat b_{\ell,i,j}$ and $\widehat\Sigma_{\ell,i,j}$ in \eqref{eq:coef_est}. We use either discounted weights $w_{\ell,i}=e^{-\beta t_i}\Delta t/L$ or finite-horizon weights $w_{\ell,i}=\Delta t/L$.

Once the MF-PhiBE critic has been computed, we evaluate the sample infinitesimal advantages $\widehat q_{\ell,i,j}$ in \eqref{eq:advantage_func} and compute the empirical policy-gradient direction
\[
\widehat{\nabla J}(\omega_k)
=
\sum_{\ell=1}^{L}
\sum_{i=0}^{N_{\rm steps}-1}
w_{\ell,i}\,
\frac1M\sum_{j=1}^{M}
\left(
\widehat q_{\ell,i,j}
-
\overline q_{\ell,i}
\right)
\nabla_\omega
\log p_{\pi_{\omega_k}}
\left(
a_{\ell,j}^i
\mid
s_{\ell,j}^i,\widehat\mu_\ell^i
\right),
\]
where $\overline q_{\ell,i}
:=
\frac1M\sum_{j=1}^{M}\widehat q_{\ell,i,j},$ and with score term given by the explicit expression
\[
\nabla_W\log p_{\pi_{\omega_k}}(a\mid s,\mu)
=
\frac{1}{\sigma_a^2}
\left(a-m_{\omega_k}(s,\mu)\right)f(s,\mu)^\top.
\]

\subsubsection{Implementation and numerical results}

For the numerical simulations, we use
\[
\sigma=0.2,
\qquad
\beta=0.1,
\qquad
\kappa=1000,
\qquad
\gamma=10,
\qquad
\eta=0.8,
\qquad
\rho=0.1,
\qquad
\lambda=2.
\]
The initial law is concentrated near $x_0=(-2,0)^\top$, with standard deviation $0.02$, and the target is $x_{\rm tar}=(2,0)^\top$. We train with $\Delta t=0.1$, $T=4.5$, $L=50$ empirical trajectories per policy update, $N_{\rm steps}=45$ time-steps, $M=40$ particles per empirical law,  and $M_\mu=9$ law features, corresponding to a $3\times 3$ grid of centers $c_1,\ldots,c_9$. The policy uses fixed covariance $\sigma_a^2 I_2$, with $\sigma_a=0.25$.

\begin{figure}
    \centering
    \includegraphics[width=0.4\linewidth]{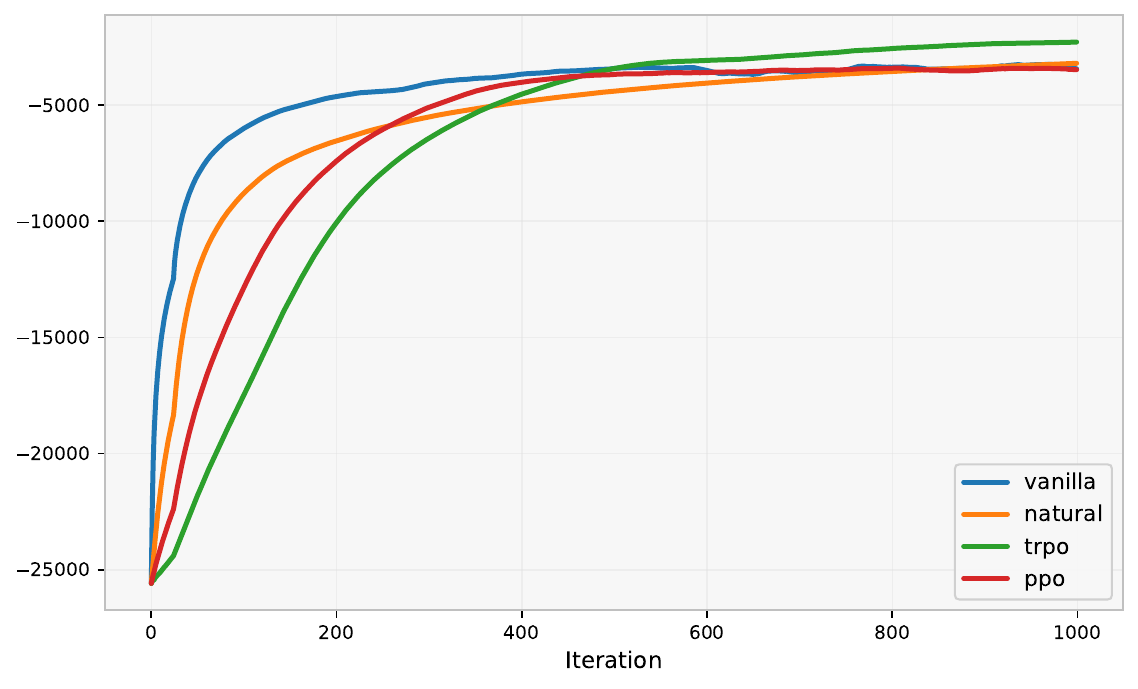}
    \includegraphics[width=0.4\linewidth]{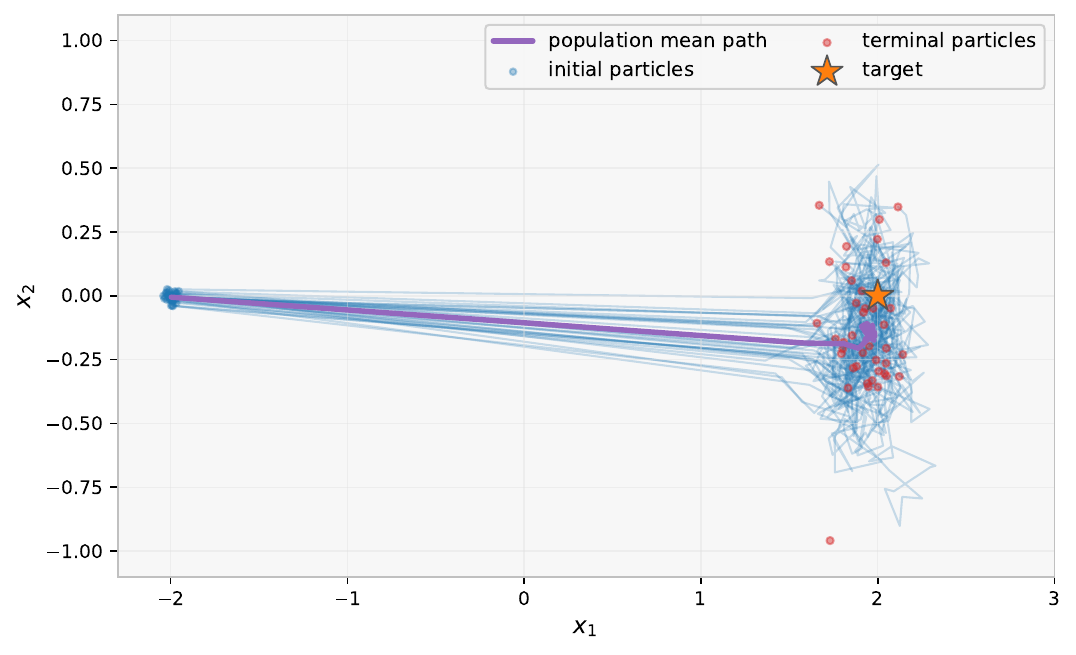}
    \caption{Crowd aversion experiment. Left: training curves for several actor updates. Right: simulated population trajectories under the learned policy.}
    \label{fig:crowd-gradient-comparison}
\end{figure}

\smallbreak
\Cref{fig:crowd-gradient-comparison} illustrates the behavior of the model-free MF-PhiBE actor-critic scheme in the crowd aversion problem. The left panel compares the training curves obtained with vanilla policy gradient, natural policy gradient, TRPO, and PPO. All methods improve the value during training, with different transient behavior depending on the update rule. The right panel shows representative trajectories under the learned policy. The population is transported from the initial region near $(-2,0)$ toward the target near $(2,0)$, while the empirical distribution spreads around the target due to the diffusion, the exploration covariance, and the crowd-aversion penalty, consistently with the interpretation of the reward given in \Cref{remark:interpretation_reward_crowd}.


\section{Acknowledgements}
E. Bayraktar is supported in part by NSF grant DMS-2602036 and in part by the Susan M. Smith Professorship. Y. Zhu and M. Hernandez are supported in part by NSF grant No. 2529107.     

\appendix

\section{Proof of the main results}
\subsection{Proofs for Section~\ref{sec:HJB_mkv}}
\label{app:HJB-proofs}

\begin{proof}[\textbf{Proof of \Cref{thm:MF_eval_infinite_mkv}}]
Let $V^\pi(s,\mu)$ be the value function of the coupled
representative-population problem in \cite{firstpaper}, associated with
the same fixed policy $\pi$. By \cite[Theorem 3.3]
{firstpaper}, under \Cref{ass:_mkv_regularity}, the coupled value
function satisfies $ V^\pi\in
  \mathcal C^{2,2}_{\mathrm{poly}}
  (\mathbb R^d\times\mathcal P_2(\mathbb R^d))$ (the space of functions with polynomial growth that are twice differentiable in $s$ and $\mu$; see \cite[Definition 3.1]{firstpaper}) and solves
\begin{equation}
\label{eq:coupled-HJB-used-mkv}
  (\mathscr L_{b,\Sigma}^{\pi}-\beta)V^\pi(s,\mu)
  +\ell_\lambda^\pi(s,\mu)=0,
\end{equation}
where $\mathscr L_{b,\Sigma}^{\pi}$ being the generator on
$\mathbb R^d\times\mathcal P_2(\mathbb R^d)$ defined as
\begin{equation}
  \label{eq:L-pi}
  \begin{aligned}
    (\mathscr{L}_{b,\Sigma}^{\pi}F)(s,\mu)
    &:=
    b^{\pi}(s,\mu)\cdot\nabla_s F(s,\mu)
    + \tfrac{1}{2}\,\Sigma^{\pi}(s,\mu) : \nabla^2_{ss} F(s,\mu)\\
    &\quad + \int_{\mathbb{R}^d}
      \Bigl[
        b^{\pi}(\xi,\mu)\cdot \partial_{\mu}F(s,\mu)(\xi)
        + \tfrac{1}{2}\,\Sigma^{\pi}(\xi,\mu)
          : D_\xi\partial_{\mu}F(s,\mu)(\xi)
      \Bigr]\,\mu(d\xi),
  \end{aligned}
\end{equation}
On the other hand, since the initial condition in
\eqref{eq:MKV_only_SDE} satisfies $s_0\sim\mu$, conditioning on
$s_0=s$, using Fubini's theorem, we have
\begin{align}\label{eq:mkv-value-as-integrated-coupled-value}
\begin{aligned}
  \mathcal V^\pi(\mu)
  &=
  \mathbb E\left[
    \int_0^\infty e^{-\beta t}
    \ell_\lambda^\pi(s_t,\mu_t)\,dt
    \,\bigg|\,\mu_0=\mu
  \right] =
  \int_{\mathbb R^d}
  \mathbb E\left[
    \int_0^\infty e^{-\beta t}
    \ell_\lambda^\pi(s_t,\mu_t)\,dt
    \,\bigg|\,s_0=s,\ \mu_0=\mu
  \right]\mu(ds)                                             \\
  &=
  \int_{\mathbb R^d}V^\pi(s,\mu)\,\mu(ds).
\end{aligned}
\end{align}
Since $V^\pi\in\mathcal C^{2,2}_{\mathrm{poly}}$, applying the Lions
derivative formula for maps of the form
$\mu\mapsto\int_{\mathbb R^d} V^\pi(s,\mu)\,\mu(ds)$
\cite[Section 5.6]{carmona2018probabilistic1} yields
\begin{equation}
\label{eq:mkv-value-lions-derivative}
  \partial_\mu\mathcal V^\pi(\mu)(x)
  =
  \nabla_sV^\pi(x,\mu)
  +
  \int_{\mathbb R^d}
  \partial_\mu V^\pi(s,\mu)(x)\,\mu(ds),
\end{equation}
and
\begin{equation}
\label{eq:mkv-value-lions-second-derivative}
  D_x\partial_\mu\mathcal V^\pi(\mu)(x)
  =
  \nabla^2_{ss}V^\pi(x,\mu)
  +
  \int_{\mathbb R^d}
  D_x\partial_\mu V^\pi(s,\mu)(x)\,\mu(ds).
\end{equation}
The polynomial bounds in the definition of
$\mathcal C^2_{\mathrm{poly}}(\mathcal P_2(\mathbb R^d))$ follow
directly from the corresponding bounds for $V^\pi$. Hence
$\mathcal V^\pi\in
\mathcal C^2_{\mathrm{poly}}(\mathcal P_2(\mathbb R^d))$. Using \eqref{eq:mkv-value-lions-derivative} and
\eqref{eq:mkv-value-lions-second-derivative}, we compute
\[
\begin{aligned}
  (\mathcal L_{b,\Sigma}^{\pi}\mathcal V^\pi)(\mu)
  &=
  \int_{\mathbb R^d}
  \left[
    b^\pi(x,\mu)\cdot\nabla_sV^\pi(x,\mu)
    +
    \frac12\Sigma^\pi(x,\mu):\nabla^2_{ss}V^\pi(x,\mu)
  \right]\mu(dx)                                            \\
  &\quad+
  \int_{\mathbb R^d}\int_{\mathbb R^d}
  \left[
    b^\pi(x,\mu)\cdot\partial_\mu V^\pi(s,\mu)(x)
    +
    \frac12\Sigma^\pi(x,\mu):
    D_x\partial_\mu V^\pi(s,\mu)(x)
  \right]\mu(dx)\mu(ds)                                     \\
  &=
  \int_{\mathbb R^d}
  (\mathscr L_{b,\Sigma}^{\pi}V^\pi)(s,\mu)\,\mu(ds).
\end{aligned}
\]
Integrating \eqref{eq:coupled-HJB-used-mkv} with respect to $\mu(ds)$
and using \eqref{eq:mkv-value-as-integrated-coupled-value} gives
\[
  (\mathcal L_{b,\Sigma}^{\pi}\mathcal V^\pi)(\mu)
  -
  \beta\mathcal V^\pi(\mu)
  +
  r_\lambda^\pi(\mu)
  =
  0.
\]
Thus $\mathcal V^\pi$ solves \eqref{eq:HJB-eval-mkv}.

We now prove uniqueness. Let
$U\in\mathcal C^2_{\mathrm{poly}}(\mathcal P_2(\mathbb R^d))$ be
another classical solution of \eqref{eq:HJB-eval-mkv}. Applying the
It\^o-Lions formula to $e^{-\beta t}U(\mu_t)$ gives
\[
  \frac{d}{dt}
  \left(e^{-\beta t}U(\mu_t)\right)
  =
  e^{-\beta t}
  \left[
    (\mathcal L_{b,\Sigma}^{\pi}U)(\mu_t)
    -
    \beta U(\mu_t)
  \right]
  =
  -e^{-\beta t}r_\lambda^\pi(\mu_t).
\]
Therefore, for every $T>0$,
\[
  U(\mu)
  =
  e^{-\beta T}U(\mu_T)
  +
  \int_0^T e^{-\beta t}r_\lambda^\pi(\mu_t)\,dt.
\]
By the polynomial growth of $U$, there exists $C_U>0$ such that
\[
  |U(\nu)|\le C_U\bigl(1+m_2(\nu)\bigr),
  \qquad \nu\in\mathcal P_2(\mathbb R^d).
\]
Moreover, by \cite[Lemma A.1 with $p=2$]{firstpaper}, there exists
$C>0$ such that
\[
  m_2(\mu_T)\le C e^{\beta_0(2)T}\bigl(1+m_2(\mu)\bigr),
  \qquad T\ge0.
\]
Since $\beta>\bar\beta_\pi$ and, by \cite[Remark 3.4]{firstpaper},
$\bar\beta_\pi\ge \beta_0(2)$, we obtain
\[
  e^{-\beta T}|U(\mu_T)|
  \le
  C e^{-\beta T}
  +
  C e^{-(\beta-\beta_0(2))T}\bigl(1+m_2(\mu)\bigr)
  \longrightarrow 0,
  \qquad T\to\infty.
\]
Hence
\[
  U(\mu)
  =
  \int_0^\infty e^{-\beta t}r_\lambda^\pi(\mu_t)\,dt
  =
  \mathcal V^\pi(\mu).
\]
It remains to prove \eqref{eq:value-derivative-bound-mkv}. In
\cite[Step 3 of the proof of Theorem 3.3]{firstpaper},
the finite-horizon derivatives of the coupled value functions are
represented through the variational flows. The estimates \cite[(71)-(72)]{firstpaper} therein imply,
after integration against $e^{-\beta t}$, that
\[
\begin{aligned}
  |\nabla_sV^\pi(s,\nu)|
  +
  |\partial_\mu V^\pi(s,\nu)(x)|
  &\le
  \frac{C_\pi}{\beta-\bar\beta_\pi}
  \bigl(1+|s|+|x|+m_2(\nu)\bigr),                                  \\
  \|\nabla^2_{ss}V^\pi(s,\nu)\|
  +
  \|D_x\partial_\mu V^\pi(s,\nu)(x)\|
  &\le
  \frac{C_\pi}{\beta-\bar\beta_\pi}
  \bigl(1+m_2(\nu)\bigr).
\end{aligned}
\]
The passage from the finite-horizon bounds to the infinite-horizon
bounds follows from the locally uniform convergence of the derivatives
proved in \cite[Step 4 of the proof of Theorem 3.3]
{firstpaper}.

Combining these estimates with
\eqref{eq:mkv-value-lions-derivative} gives
\[
\begin{aligned}
  |\partial_\mu\mathcal V^\pi(\nu)(x)|
  &\le
  |\nabla_sV^\pi(x,\nu)|
  +
  \int_{\mathbb R^d}
  |\partial_\mu V^\pi(s,\nu)(x)|\,\nu(ds)\le
  \frac{C_\pi}{\beta-\bar\beta_\pi}
  \bigl(1+|x|^2+m_2(\nu)\bigr).
\end{aligned}
\]
Similarly, using \eqref{eq:mkv-value-lions-second-derivative},
\[
\begin{aligned}
  \|D_x\partial_\mu\mathcal V^\pi(\nu)(x)\|
  &\le
  \|\nabla^2_{ss}V^\pi(x,\nu)\|
  +
  \int_{\mathbb R^d}
  \|D_x\partial_\mu V^\pi(s,\nu)(x)\|\,\nu(ds)     \le
  \frac{C_\pi}{\beta-\bar\beta_\pi}
  \bigl(1+|x|^2+m_2(\nu)\bigr).
\end{aligned}
\]
This proves \eqref{eq:value-derivative-bound-mkv}.
\end{proof}

\begin{proof}[\textbf{Proof of \Cref{cor:policy-gradient-mkv}}]
Let $V^{\pi^\varepsilon}$ denote the value function of the coupled
state-law problem studied in \cite{firstpaper}. Set
\[
  J(\varepsilon)
  :=
  \int_{\mathbb R^d}
  V^{\pi^\varepsilon}(s,\mu)\,\mu(ds).
\]
By \eqref{eq:mkv-value-as-integrated-coupled-value},
$J(\varepsilon)=\mathcal V^{\pi^\varepsilon}(\mu)=\mathcal J(\varepsilon)$.
It is therefore enough to compute the derivative of $J$ at
$\varepsilon=0$.

We apply \cite[Theorem~4.6]{firstpaper} with initial population law
$\mu$ and initial representative law also equal to $\mu_0=\mu$. Denote by
$\widehat\rho_\mu^\pi$ the corresponding discounted occupancy measure of
the coupled state-law process. Then $J$ is differentiable at
$\varepsilon=0$, and
\[
\begin{aligned}
\frac{d}{d\varepsilon}J(\varepsilon)\bigg|_{\varepsilon=0}
&=\frac{1}{\beta}
\mathbb E_{(s,\eta)\sim\widehat\rho^\pi}\!\left[
\int_{\mathcal A}
q_{\mathrm{rep}}^\pi(s,\eta,a)\,
\varphi(da\mid s,\eta)
+
\int_{\mathbb R^d}\!\int_{\mathcal A}
q_{\mathrm{pop}}^\pi(s,\eta,\xi,a)\,
\varphi(da\mid \xi,\eta)\,\eta(d\xi)
\right],
\end{aligned}
\]
where, according to \cite[Definition~4.2]{firstpaper},
\[
\begin{aligned}
q_{\mathrm{rep}}^\pi(s,\eta,a)
&=
r(s,\eta,a)-\lambda\log p_\pi(a\mid s,\eta)
+b(s,\eta,a)\cdot\nabla_s V^\pi(s,\eta)
\\
&\qquad\qquad\qquad+\frac12\,\Sigma(s,\eta,a):D^2_{ss}V^\pi(s,\eta)-\beta V^\pi(s,\eta),
\\
q_{\mathrm{pop}}^\pi(s,\eta,\xi,a)
&=
b(\xi,\eta,a)\cdot\partial_\mu V^\pi(s,\eta)(\xi)
+\frac12\,\Sigma(\xi,\eta,a):D_\xi\partial_\mu V^\pi(s,\eta)(\xi).
\end{aligned}
\]

Since the initial representative law is $\mu$, the representative
component in the coupled state-law system has law $\mu_t^\pi$ at time
$t$. Hence, for every measurable $F$ for which the following quantities
are well defined,
\begin{align}\label{eq:rho-transfer-mkv}
\mathbb E_{(s,\eta)\sim\widehat\rho_\mu^\pi}[F(\eta,s)]
=
\beta\int_0^\infty e^{-\beta t}
\int_{\mathbb R^d}
F(\mu_t^\pi,\xi)\,\mu_t^\pi(d\xi)\,dt
=
\mathbb E_{(\eta,\xi)\sim\rho_\mu^\pi}[F(\eta,\xi)].
\end{align}
Applying this identity with
\[
F(\eta,\xi)
:=
\int_{\mathcal A}
\left[
q_{\mathrm{rep}}^\pi(\xi,\eta,a)
+
\int_{\mathbb R^d}
q_{\mathrm{pop}}^\pi(s,\eta,\xi,a)\,\eta(ds)
\right]
\varphi(da\mid \xi,\eta),
\]
we obtain
\begin{align}\label{eq:derivative_j_almost_finished}
    \begin{aligned}
\frac{d}{d\varepsilon}\mathcal J(\varepsilon)\bigg|_{\varepsilon=0}
&=\frac{1}{\beta}
\mathbb E_{(\eta,\xi)\sim\rho_\mu^\pi}\!\left[
\int_{\mathcal A}
\left[
q_{\mathrm{rep}}^\pi(\xi,\eta,a)
+
\int_{\mathbb R^d}
q_{\mathrm{pop}}^\pi(s,\eta,\xi,a)\,\eta(ds)
\right]
\varphi(da\mid \xi,\eta)
\right].
\end{aligned}
\end{align}
It remains to identify the term in brackets. By \eqref{eq:mkv-value-lions-derivative} and
\eqref{eq:mkv-value-lions-second-derivative},
\begin{align}\label{eq:realtion_q_paper_1_neq_q}
&q_{\mathrm{rep}}^\pi(\xi,\eta,a)
+\int_{\mathbb R^d}q_{\mathrm{pop}}^\pi(s,\eta,\xi,a)\,\eta(ds)=
q^\pi(\eta,\xi,a)
+\beta\left(
\mathcal V^\pi(\eta)-V^\pi(\xi,\eta)
\right).
\end{align}
The additional term is independent of $a$. Hence, by
$\int_{\mathcal A}\varphi(da\mid\xi,\eta)=0$,
\[
\int_{\mathcal A}
\beta\bigl(\mathcal V^\pi(\eta)-V^\pi(\xi,\eta)\bigr)
\,\varphi(da\mid\xi,\eta)=0.
\]
Substituting \eqref{eq:realtion_q_paper_1_neq_q} into
\eqref{eq:derivative_j_almost_finished} therefore gives
\eqref{eq:gateaux-pg-mkv}.
\end{proof}
\begin{proof}[\textbf{Proof of \Cref{thm:parametric-policy-gradient}}]
Fix $\omega\in\mathbb R^p$ and set $\pi:=\pi_\omega$. Define $J(\omega):=\int_{\mathbb R^d}V^{\pi_\omega}(s,\mu)\,\mu(ds).$ As in the proof of \Cref{cor:policy-gradient-mkv}, one has
$J(\omega)=\mathcal V^{\pi_\omega}(\mu)=\mathcal J(\omega)$, so it is
enough to compute the gradient of $J$.

By the parametric policy gradient theorem for $J$ in \cite[Theorem 4.9]{firstpaper},
$J$ is differentiable at $\omega$ and
\[
\begin{aligned}
\nabla_\omega J(\omega)
&=
\frac{1}{\beta}\mathbb E_{(s,\eta)\sim\widehat\rho^\pi}\!\left[
\int_{\mathcal A}
q_{\mathrm{rep}}^\pi(s,\eta,a)\,
\nabla_\omega\log p_\pi(a\mid s,\eta)\,
\pi(da\mid s,\eta)
\right]\\
&\qquad+
\frac{1}{\beta}\mathbb E_{(s,\eta)\sim\widehat\rho^\pi}\!\left[
\int_{\mathbb R^d}\!\int_{\mathcal A}
q_{\mathrm{pop}}^\pi(s,\eta,\xi,a)\,
\nabla_\omega\log p_\pi(a\mid \xi,\eta)\,
\pi(da\mid \xi,\eta)\,
\eta(d\xi)
\right].
\end{aligned}
\]
Using \eqref{eq:rho-transfer-mkv} and integrating the population term
first with respect to the representative state yields
\[
\nabla_\omega J(\omega)
=\frac1\beta
\mathbb E_{(\eta,\xi)\sim\rho_\mu^\pi}
\left[
\int_{\mathcal A}
\left(
q_{\mathrm{rep}}^\pi(\xi,\eta,a)
+\int_{\mathbb R^d}q_{\mathrm{pop}}^\pi(s,\eta,\xi,a)\,\eta(ds)
\right)
\nabla_\omega\log p_\pi(a\mid\xi,\eta)
\,\pi(da\mid\xi,\eta)
\right].
\]
By \eqref{eq:realtion_q_paper_1_neq_q}, the term in parentheses equals
$q^\pi(\eta,\xi,a)$ plus an action-independent baseline. The baseline
vanishes because Assumption~\ref{ass:A2} justifies differentiation of
$\int p_{\pi_\omega}\,d\nu=1$ and therefore
\[
\int_{\mathcal A}
\nabla_\omega\log p_\pi(a\mid\xi,\eta)
\,\pi(da\mid\xi,\eta)=0.
\]
This proves \eqref{eq:parametric-pg}; its conditional-expectation form
follows immediately.
\end{proof}

\subsection{Proofs for Section~\ref{sec:MF-phibe}}
\label{app:error-proofs}

\begin{definition}
   We use the following pointwise notation. If $f=f(s,\mu)$ is
$\mathcal C^{2,2}$ in $(s,\mu)$, then
\begin{equation}
\label{eq:evaluation-seminorm}
\begin{aligned}
\tnorm{f(s,\mu)}_{\xi}
:=
&\,\|f(s,\mu)\|
+\|\nabla_s f(s,\mu)\|
+\|D_{ss}^2 f(s,\mu)\|
+
|\partial_\mu f(s,\mu)(\xi)|
+
\|D_\xi\partial_\mu f(s,\mu)(\xi)\|.
\end{aligned}
\end{equation}
The derivatives in \eqref{eq:evaluation-seminorm} are always taken with
respect to $(s,\mu)$. Any additional variables, such as $a$ or $z$, are
treated as fixed parameters.
\end{definition}

\begin{lemma}[Local expansion for the frozen-action estimators]
\label{lem:local-dynkin-expansion-frozen-action-mkv}
Assume \Cref{ass:MF-PhiBE-structure}. Fix $a\in\mathcal A$ and
$(s,\mu)\in\R^d\times\mathcal P_2(\R^d)$. Let
$(\mu_t)_{t\in[0,\Delta t]}$ be the law flow associated with the
frozen-action version of \eqref{eq:MKV_only_SDE}, initialized at $\mu$,
that is, with $b^\pi$ and $\sigma^\pi$ replaced by
$b(\cdot,\cdot,a)$ and $\sigma(\cdot,\cdot,a)$. Given this flow, let
$(s_t)_{t\in[0,\Delta t]}$ solve
\[
ds_t
=
b(s_t,\mu_t,a)\,dt
+
\sigma(s_t,\mu_t,a)\,dB_t,
\qquad
s_0=s.
\]
For $F\in\mathcal C^{2,2}_{\mathrm{poly}}
(\R^d\times\mathcal P_2(\R^d))$, define
\begin{equation}
\label{eq:frozen-action-generator-mkv}
\begin{aligned}
(\mathscr L_{b,\Sigma}^aF)(x,\nu)
&:=
b(x,\nu,a)\cdot\nabla_xF(x,\nu)
+\frac12\,\Sigma(x,\nu,a):D_{xx}^2F(x,\nu)
\\
&\quad+
\int_{\R^d}
\Bigl[
b(\eta,\nu,a)\cdot\partial_\nu F(x,\nu)(\eta)
+
\frac12\,\Sigma(\eta,\nu,a):D_\eta\partial_\nu F(x,\nu)(\eta)
\Bigr]\nu(d\eta),
\end{aligned}
\end{equation}
where $\Sigma(x,\nu,a):=\sigma(x,\nu,a)\sigma(x,\nu,a)^\top$.
Assume that $\mathscr L_{b,\Sigma}^aF\in
\mathcal C^{2,2}_{\mathrm{poly}}(\R^d\times\mathcal P_2(\R^d))$ and
\[
\sup_{\tau\in[0,\Delta t]}
\mathbb E\!\left[
|(\mathscr L_{b,\Sigma}^a)^2F(s_\tau,\mu_\tau)|
\right]<\infty .
\]
Then
\begin{equation}
\label{eq:local-expansion-general-mkv}
\frac{\mathbb E[F(s_{\Delta t},\mu_{\Delta t})]-F(s,\mu)}{\Delta t}
=
(\mathscr L_{b,\Sigma}^aF)(s,\mu)
+
R_F^{a,\Delta t}(s,\mu),
\end{equation}
where
\begin{equation}
\label{eq:local-expansion-remainder-mkv}
|R_F^{a,\Delta t}(s,\mu)|
\le
\frac{\Delta t}{2}
\sup_{\tau\in[0,\Delta t]}
\mathbb E\!\left[
|(\mathscr L_{b,\Sigma}^a)^2F(s_\tau,\mu_\tau)|
\right].
\end{equation}
Moreover, for $F_k(x,\nu):=x_k$ and
$G_{ij}^s(x,\nu):=(x_i-s_i)(x_j-s_j)$, one has
\begin{equation}
\label{eq:Fk-identities-mkv}
\frac{\mathbb E[F_k(s_{\Delta t},\mu_{\Delta t})]-F_k(s,\mu)}{\Delta t}
=
\hat b_k(s,\mu,a),
\qquad
(\mathscr L_{b,\Sigma}^aF_k)(s,\mu)=b_k(s,\mu,a),
\end{equation}
and
\begin{equation}
\label{eq:Gij-identities-mkv}
\frac{\mathbb E[G_{ij}^s(s_{\Delta t},\mu_{\Delta t})]-G_{ij}^s(s,\mu)}
{\Delta t}
=
\hat\Sigma_{ij}(s,\mu,a),
\qquad
(\mathscr L_{b,\Sigma}^aG_{ij}^s)(s,\mu)=\Sigma_{ij}(s,\mu,a).
\end{equation}
In particular, whenever the remainders in
\eqref{eq:local-expansion-general-mkv} are finite for $F_k$ and
$G_{ij}^s$,
\[
\hat  b_k(s,\mu,a)-b_k(s,\mu,a)
=
R_{F_k}^{a,\Delta t}(s,\mu), 
\qquad
\hat\Sigma_{ij}(s,\mu,a)-\Sigma_{ij}(s,\mu,a)
=
R_{G_{ij}^s}^{a,\Delta t}(s,\mu).
\]
\end{lemma}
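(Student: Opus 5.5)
The plan is to apply the It\^o--Lions formula twice (a second-order Dynkin expansion) along the pair $(s_\tau,\mu_\tau)$. This pair is a Markov process with generator $\mathscr L_{b,\Sigma}^a$ from \eqref{eq:frozen-action-generator-mkv}. Indeed, the law flow $(\mu_\tau)$ is the frozen-action McKean--Vlasov flow, and $s_\tau$ is the decoupled state driven along it.

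\textbf{Step 1 (Dynkin formulas).} For $F\in\mathcal C^{2,2}_{\mathrm{poly}}$, the joint It\^o--Lions chain rule for functions of a state and a deterministic law flow gives
\[
F(s_t,\mu_t)=F(s,\mu)+\int_0^t(\mathscr L_{b,\Sigma}^aF)(s_\tau,\mu_\tau)\,d\tau+M_t ,
\]
where $M_t=\int_0^t\nabla_xF(s_\tau,\mu_\tau)^\top\sigma(s_\tau,\mu_\tau,a)\,dB_\tau$. Since $\sigma$ is bounded by \Cref{ass:MF-PhiBE-structure}, $\nabla_xF$ has linear growth, and second moments of $s_\tau$ and $m_2(\mu_\tau)$ are bounded on $[0,\Delta t]$, the integral $M$ is a true martingale. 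Hence $\mathbb E[F(s_t,\mu_t)]=F(s,\mu)+\int_0^t\mathbb E[\mathscr L^a F(s_\tau,\mu_\tau)]\,d\tau$. The same argument applies to $\mathscr L^aF$, which lies in $\mathcal C^{2,2}_{\mathrm{poly}}$ by hypothesis, so
\[
\mathbb E[\mathscr L^aF(s_\tau,\mu_\tau)]=\mathscr L^aF(s,\mu)+\int_0^\tau\mathbb E[(\mathscr L^a)^2F(s_r,\mu_r)]\,dr .
\]

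\textbf{Step 2 (remainder).} Substitute the second identity into the first at $t=\Delta t$ and divide by $\Delta t$. This gives \eqref{eq:local-expansion-general-mkv} with
\[
R_F^{a,\Delta t}=\frac1{\Delta t}\int_0^{\Delta t}\!\int_0^\tau\mathbb E[(\mathscr L^a)^2F(s_r,\mu_r)]\,dr\,d\tau .
\]
Bounding the integrand by its supremum over $[0,\Delta t]$, and using $\int_0^{\Delta t}\tau\,d\tau=\Delta t^2/2$, yields \eqref{eq:local-expansion-remainder-mkv}. Fubini is justified by the assumed finiteness of that supremum.

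\textbf{Step 3 (specific test functions).} For $F_k(x,\nu)=x_k$ the map $F_k$ does not depend on $\nu$, so $\nabla_xF_k=e_k$ and all other derivatives vanish; hence $\mathscr L^aF_k=b_k(s,\mu,a)$. The left-hand side of \eqref{eq:Fk-identities-mkv} is $\mathbb E[(s_{\Delta t}-s)_k]/\Delta t=\hat b_k$ by the definition \eqref{eq:estimated-drift-policy-mkv22}. For $G^s_{ij}(x,\nu)=(x_i-s_i)(x_j-s_j)$, with $s$ fixed as a parameter, we have $G^s_{ij}(s,\mu)=0$. Its gradient is $\nabla_x G^s_{ij}=(x_j-s_j)e_i+(x_i-s_i)e_j$, which vanishes at $x=s$, and its Hessian is $e_ie_j^\top+e_je_i^\top$. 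Therefore $\mathscr L^aG^s_{ij}(s,\mu)=\tfrac12\Sigma:(e_ie_j^\top+e_je_i^\top)=\Sigma_{ij}(s,\mu,a)$ by symmetry of $\Sigma$. The left-hand side of \eqref{eq:Gij-identities-mkv} equals $\hat\Sigma_{ij}$ by \eqref{eq:estimated-covariance-policy-mkv2}. The final identities then follow by subtracting.

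The main obstacle is Step 1. One has to justify the It\^o--Lions formula for the coupled pair when the law argument evolves along a deterministic McKean--Vlasov flow, and then verify the martingale and integrability conditions under polynomial growth. I would obtain the chain rule from the standard result for $\mathcal C^{2,2}$ functions in \cite[Section 5.6]{carmona2018probabilistic1}, using the joint continuity of derivatives. I would control moments on the short interval by Gr\"onwall with the bounded coefficients of \Cref{ass:MF-PhiBE-structure}. The remaining steps are direct computation.
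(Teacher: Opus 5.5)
Your proposal is correct and follows the paper's proof essentially step for step. Both arguments run a two-level It\^o--Lions/Dynkin expansion along the deterministic frozen-action law flow, showing the stochastic integral is a true martingale via polynomial growth and bounded coefficients, then bound the double-integral remainder using $\frac{1}{\Delta t}\int_0^{\Delta t}\int_0^\tau du\,d\tau=\frac{\Delta t}{2}$, and finally evaluate $\mathscr L_{b,\Sigma}^a$ directly on $F_k$ and $G_{ij}^s$. The only cosmetic difference is that the paper first writes $\mathscr L_{b,\Sigma}^aG_{ij}^s(x,\nu)=b_i(x_j-s_j)+b_j(x_i-s_i)+\Sigma_{ij}$ for general $(x,\nu)$ and then sets $(x,\nu)=(s,\mu)$, whereas you evaluate at $x=s$ from the start.
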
\begin{proof}
Since $(\mu_t)_{t\in[0,\Delta t]}$ is deterministic, the
It\^o-Lions formula applied to $F(s_t,\mu_t)$ gives
\[
F(s_t,\mu_t)
=
F(s,\mu)
+
\int_0^t
(\mathscr L_{b,\Sigma}^aF)(s_\tau,\mu_\tau)\,d\tau
+
M_t^F,
\]
with
\[
M_t^F
=
\int_0^t
\nabla_xF(s_\tau,\mu_\tau)
\sigma(s_\tau,\mu_\tau,a)\,dB_\tau .
\]
The polynomial growth of $\nabla_xF$, the boundedness of the coefficients
in \Cref{ass:MF-PhiBE-structure}, and the moment estimates for the
frozen-action dynamics imply that $M^F$ is a square-integrable
martingale. Hence
\[
\mathbb E[F(s_t,\mu_t)]-F(s,\mu)
=
\int_0^t
\mathbb E[
(\mathscr L_{b,\Sigma}^aF)(s_\tau,\mu_\tau)
]\,d\tau .
\]
Applying the same argument to $\mathscr L_{b,\Sigma}^aF$ yields, for
$\tau\in[0,\Delta t]$,
\[
\mathbb E[
(\mathscr L_{b,\Sigma}^aF)(s_\tau,\mu_\tau)
]
=
(\mathscr L_{b,\Sigma}^aF)(s,\mu)
+
\int_0^\tau
\mathbb E[
(\mathscr L_{b,\Sigma}^a)^2F(s_u,\mu_u)
]\,du .
\]
Combining the two identities gives
\[
\mathbb E[F(s_t,\mu_t)]-F(s,\mu)
=
t(\mathscr L_{b,\Sigma}^aF)(s,\mu)
+
\int_0^t\int_0^\tau
\mathbb E[
(\mathscr L_{b,\Sigma}^a)^2F(s_u,\mu_u)
]\,du\,d\tau .
\]
Taking $t=\Delta t$ and dividing by $\Delta t$ proves
\eqref{eq:local-expansion-general-mkv}, with
\[
R_F^{a,\Delta t}(s,\mu)
:=
\frac1{\Delta t}
\int_0^{\Delta t}\int_0^\tau
\mathbb E[
(\mathscr L_{b,\Sigma}^a)^2F(s_u,\mu_u)
]\,du\,d\tau .
\]
The estimate \eqref{eq:local-expansion-remainder-mkv} follows from
\[
\frac1{\Delta t}\int_0^{\Delta t}\int_0^\tau du\,d\tau
=
\frac{\Delta t}{2}.
\]

For $F_k(x,\nu):=x_k$,
\[
\mathbb E[F_k(s_{\Delta t},\mu_{\Delta t})]-F_k(s,\mu)
=
\mathbb E[(s_{\Delta t})_k-s_k].
\]
Thus the first identity in \eqref{eq:Fk-identities-mkv} follows from
the definition of $\hat b_k$. Since $\nabla_xF_k=e_k$,
$D_{xx}^2F_k=0$, and $\partial_\nu F_k=0$, we also have
\[
(\mathscr L_{b,\Sigma}^aF_k)(s,\mu)
=
b(s,\mu,a)\cdot e_k
=
b_k(s,\mu,a).
\]

For $G_{ij}^s(x,\nu):=(x_i-s_i)(x_j-s_j)$,
$G_{ij}^s(s,\mu)=0$ and
\[
\mathbb E[G_{ij}^s(s_{\Delta t},\mu_{\Delta t})]
=
\mathbb E[
((s_{\Delta t})_i-s_i)((s_{\Delta t})_j-s_j)
].
\]
Hence the first identity in \eqref{eq:Gij-identities-mkv} follows from
the definition of $\hat\Sigma_{ij}$. Moreover,
\[
\nabla_xG_{ij}^s(x,\nu)
=
e_i(x_j-s_j)+e_j(x_i-s_i),
\qquad
D_{xx}^2G_{ij}^s(x,\nu)
=
e_ie_j^\top+e_je_i^\top,
\qquad
\partial_\nu G_{ij}^s=0.
\]
Therefore
\[
(\mathscr L_{b,\Sigma}^aG_{ij}^s)(x,\nu)
=
b_i(x,\nu,a)(x_j-s_j)
+
b_j(x,\nu,a)(x_i-s_i)
+
\Sigma_{ij}(x,\nu,a).
\]
Evaluating at $(x,\nu)=(s,\mu)$ gives the second identity in
\eqref{eq:Gij-identities-mkv}.

The final identities follow by applying
\eqref{eq:local-expansion-general-mkv} to $F_k$ and $G_{ij}^s$, and
then using \eqref{eq:Fk-identities-mkv} and
\eqref{eq:Gij-identities-mkv}.
\end{proof}

\begin{lemma}[Differentiated remainder bounds]
\label{lem:differentiated-remainder-bounds-mkv}
Assume \Cref{ass:MF-PhiBE-structure}. Fix $a\in\mathcal A$ and
$\Delta t\in(0,1]$. Let $(s_t,\mu_t)_{t\in[0,\Delta t]}$ be defined as
in \Cref{lem:local-dynkin-expansion-frozen-action-mkv}. For a smooth
function $H=H(x,\nu)$, define
\begin{equation}
\label{eq:RFk-definition-mkv}
R_H^{a,\Delta t}(s,\mu)
:=
\frac1{\Delta t}
\int_0^{\Delta t}\int_0^\tau
\mathbb E\!\left[
(\mathscr L_{b,\Sigma}^a)^2H(s_u,\mu_u)
\right]du\,d\tau .
\end{equation}
Then, for $F_k(x,\nu)=x_k$ and
$G_{ij}^s(x,\nu)=(x_i-s_i)(x_j-s_j)$, there exists $C>0$, depending
only on the constants in \Cref{ass:MF-PhiBE-structure} and on $d$, such
that, for every $(s,\mu,\xi)\in\R^d\times\mathcal P_2(\R^d)\times\R^d$,
\begin{equation}
\label{eq:RFk-differentiated-bound-mkv}
\tnorm{R_{F_k}^{a,\Delta t}(s,\mu)}_{\xi}
\le
C\Delta t,
\qquad
\tnorm{R_{G_{ij}^s}^{a,\Delta t}(s,\mu)}_{\xi}
\le
C\Delta t .
\end{equation}
Moreover, for every fixed $\Delta t\in(0,1]$ and every $R>0$, there
exists $C_{R,\Delta t}>0$, independent of $a$, such that every component
entering the seminorm in \eqref{eq:evaluation-seminorm} for
$R_{F_k}^{a,\Delta t}$ and $R_{G_{ij}^s}^{a,\Delta t}$ is locally
Lipschitz on
\[
|s|+\sqrt{m_2(\mu)}+|\xi|\le R
\]
with Lipschitz constant at most $C_{R,\Delta t}$.

\end{lemma}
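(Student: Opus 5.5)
The plan is to make the remainder explicit and then differentiate under the expectation along the frozen-action flow. Start with $F_k(x,\nu)=x_k$. Since $\mathscr L^a_{b,\Sigma}F_k(x,\nu)=b_k(x,\nu,a)$, we have
\[
(\mathscr L^a_{b,\Sigma})^2F_k(x,\nu)=\mathscr L^a_{b,\Sigma}\bigl[b_k(\cdot,\cdot,a)\bigr](x,\nu).
\]
This is a finite sum of products of $b,\Sigma$ with first and second spatial and Lions derivatives of $b_k$. By \Cref{ass:MF-PhiBE-structure} it is bounded, together with its spatial and Lions derivatives up to order two, uniformly in $a$. For $G^s_{ij}$, the expression $\mathscr L^a G^s_{ij}(x,\nu)=b_i(x_j-s_j)+b_j(x_i-s_i)+\Sigma_{ij}(x,\nu)$ depends on the parameter $s$. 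Applying $\mathscr L^a$ once more gives
\[
b_ib_j+b_jb_i+(x_j-s_j)\,\mathscr L^a b_i+(x_i-s_i)\,\mathscr L^a b_j+\sum_\ell \Sigma_{i\ell}\partial_\ell b_j+\Sigma_{j\ell}\partial_\ell b_i+\mathscr L^a\Sigma_{ij}.
\]
Every term is either bounded or bounded by a constant times $|x-s|$, with the same structure for its derivatives in $(x,\nu)$. The fourth-order derivative assumption is exactly what leaves two further derivatives available after applying $\mathscr L^a$ twice.

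Next, write
\[
R_H^{a,\Delta t}(s,\mu)=\frac1{\Delta t}\int_0^{\Delta t}\!\!\int_0^\tau \Psi_H(u;s,\mu)\,du\,d\tau,\qquad \Psi_H(u;s,\mu):=\mathbb E\bigl[h(s_u^{s,\mu},\mu_u^{\mu};s)\bigr],
\]
where $h=(\mathscr L^a)^2H$. It suffices to show that $\tnorm{\Psi_H(u;s,\mu)}_\xi\le C$ uniformly for $u\in[0,1]$, since the double integral then contributes the factor $\Delta t/2$.

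The zeroth-order bound is immediate for $F_k$. For $G^s_{ij}$ it uses $\mathbb E|s_u-s|\le C\sqrt u$, which follows from bounded coefficients. For the derivatives I would use the standard differentiability of the decoupled MKV flow with respect to its initial data, as in \cite[Section 5.7]{carmona2018probabilistic1} and the variational-flow arguments of \cite[Theorem 3.3]{firstpaper}:
\begin{itemize}
\item The first- and second-order tangent processes $\partial_s s_u$ and $\partial_s^2 s_u$ satisfy linear SDEs with bounded coefficients, so their moments are bounded on $[0,1]$ by Gronwall.
\item The Lions derivatives $\partial_\mu s_u(\xi)$ and $D_\xi\partial_\mu s_u(\xi)$ are obtained from the linearized McKean--Vlasov system driven by an independent copy. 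Under bounded Lipschitz coefficients their moments are likewise bounded, uniformly in $(s,\mu,\xi)$ and $a$.
\end{itemize}
The chain rule for $\mathbb E[h(s_u,\mu_u;s)]$ then expresses each seminorm component through derivatives of $h$ times these tangent processes. For $G^s_{ij}$, differentiating in $s$ also hits the explicit $s$ inside $(x_j-s_j)$. That produces bounded terms such as $-\mathscr L^ab_i$, plus terms like $(\partial_s s_u-I)\,\mathscr L^a b_i$, which remain bounded. Collecting these estimates gives \eqref{eq:RFk-differentiated-bound-mkv}.

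For the local Lipschitz claim, the derivatives of $h$ are locally Lipschitz, since they are built from bounded continuous derivatives up to order four and by the Lipschitz part of \Cref{ass:MF-PhiBE-structure}. In addition, the maps $(s,\mu,\xi)\mapsto$ (tangent processes) are Lipschitz in $L^2$ on bounded sets, again by Gronwall estimates on the differences. The constant may depend on $\Delta t$ and $R$ but not on $a$, since all bounds are uniform in $a$.

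The main obstacle is the Lions-derivative part $\partial_\mu\Psi$ and $D_\xi\partial_\mu\Psi$. Here $\mu$ enters through both $\mu_u$ and the law-dependence of the drift of $s_u$, so one needs the second-order linearized MKV system and uniform moment bounds for it. I would handle this by quoting the flow-derivative estimates from \cite{firstpaper} in the frozen-action case, where Assumption~\ref{ass:_mkv_regularity} holds with $\pi=\delta_a$ and constants uniform in $a$. I would verify only that boundedness of the coefficients makes the bounds time-uniform on $[0,1]$ rather than exponentially growing.
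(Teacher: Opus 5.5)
Your proof of the bound \eqref{eq:RFk-differentiated-bound-mkv} follows the paper's. You bound $(\mathscr L^a_{b,\Sigma})^2H$ and its derivatives using the order-four assumption, control the factor $|x-s|$ through $\mathbb E|s_u-s|\le C\sqrt u$, and push derivatives through $\mathbb E[\,\cdot\,(s_u,\mu_u)]$ with the variational flows of \cite[Lemma A.4]{firstpaper}. The double integral then supplies the factor $\Delta t/2$. The paper handles the explicit $s$ in $G^s_{ij}$ by freezing it as a parameter $z$. It proves $\sum_{|\ell|\le2}\tnorm{\partial_z^\ell(\cdot)}_\xi\le C(1+|s-z|)\Delta t$ and then sets $z=s$ by the chain rule; your total-derivative computation is equivalent. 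Your concern about time-uniform bounds is unnecessary: on $[0,1]$ any Gronwall constant is harmless.

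The gap is in the local Lipschitz claim. You say the derivatives of $h=(\mathscr L^a_{b,\Sigma})^2H$ are locally Lipschitz because they are built from bounded continuous derivatives up to order four. But already for $F_k$, $h=\mathscr L^a_{b,\Sigma}b_k$ contains $\tfrac12\Sigma:D^2_{xx}b_k$ and $\tfrac12\int\Sigma(\eta,\nu,a):D_\eta\partial_\nu b_k(x,\nu)(\eta)\,\nu(d\eta)$. Hence $D^2_{xx}h$ and $D_\xi\partial_\nu h$ contain fourth-order spatial and Lions derivatives of the coefficients. Since $D^2_{ss}\Psi_H=\mathbb E[D^2_{xx}h(s_u,\mu_u)(\nabla_s s_u,\nabla_s s_u)+\nabla_xh(s_u,\mu_u)D^2_{ss}s_u]$, and similarly for $D_\xi\partial_\mu\Psi_H$, Lipschitz continuity of these seminorm components would require Lipschitz fourth-order derivatives of $b$ and $\sigma$. \Cref{ass:MF-PhiBE-structure} only makes them bounded and continuous, so your route gives continuity, not Lipschitz continuity, of $D^2_{ss}R$ and $D_\xi\partial_\mu R$.

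The missing step is to remove one application of the generator before differentiating. By \Cref{lem:local-dynkin-expansion-frozen-action-mkv}, or equivalently Dynkin's formula $\int_0^\tau\mathbb E[\mathscr L^a_{b,\Sigma}g(s_u,\mu_u)]\,du=\mathbb E[g(s_\tau,\mu_\tau)]-g(s,\mu)$, one has $R^{a,\Delta t}_{F_k}=\hat b_k-b_k$ and $R^{a,\Delta t}_{G^s_{ij}}=\hat\Sigma_{ij}-\Sigma_{ij}$. The functions $\hat b=\Delta t^{-1}\mathbb E[s_{\Delta t}-s]$ and $\hat\Sigma=\Delta t^{-1}\mathbb E[(s_{\Delta t}-s)(s_{\Delta t}-s)^\top]$ involve only the flow. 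Their second-order state and mixed state--Lions derivatives are locally Lipschitz by third-order variational estimates. Likewise, $b$ and $\Sigma$ have Lipschitz second derivatives because their third derivatives are bounded. Both facts stay within the assumed order-four regularity. The prefactor $1/\Delta t$ is absorbed into $C_{R,\Delta t}$, which the statement allows to depend on $\Delta t$. This is how the paper proves the Lipschitz part.
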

\begin{proof}
\emph{Step 1: bounds for the short-time operator $P_u^a$.}
For $u\in[0,1]$, define
\[
P_u^a\Phi(s,\mu):=\mathbb E[\Phi(s_u,\mu_u)],
\]
where $(s_u,\mu_u)$ is the frozen-action dynamics introduced in \Cref{lem:local-dynkin-expansion-frozen-action-mkv}. We first prove that
\begin{equation}
\label{eq:semigroup-uniform-bound-mkv}
\tnorm{P_u^a\Phi(s,\mu)}_{\xi}\le C C_\Phi,
\qquad u\in[0,1],
\end{equation}
whenever
\[
\tnorm{\Phi(x,\nu)}_{\eta}\le C_\Phi
\qquad
\text{for all }(x,\nu,\eta)\in\R^d\times\mathcal P_2(\R^d)\times\R^d.
\]
By \cite[Lemma A.4, estimate (52)]{firstpaper}, applied to the frozen coefficients $b(\cdot,\cdot,a)$ and $\sigma(\cdot,\cdot,a)$, the derivatives of the frozen-action flow with respect to the initial state and the initial law satisfy
\[
\sup_{u\in[0,1]}\mathbb E\!\left[
\|\nabla_s s_u\|^2+\|D_{ss}^2s_u\|^2+\|\partial_\mu s_u(\xi)\|^2+\|D_\xi\partial_\mu s_u(\xi)\|^2
\right]\le C,
\]
where $C$ is independent of $a$, $u$, $s$, $\mu$, $\xi$, and $\Delta t$. Therefore, differentiating $P_u^a\Phi(s,\mu)=\mathbb E[\Phi(s_u,\mu_u)]$ and using the chain rule gives
\[
\|\nabla_s P_u^a\Phi(s,\mu)\|+\|D_{ss}^2P_u^a\Phi(s,\mu)\|\le C C_\Phi.
\]
The same argument, using the Lions chain rule and the estimates for $\partial_\mu s_u(\xi)$ and $D_\xi\partial_\mu s_u(\xi)$ in \cite[Lemma A.4, estimate (52)]{firstpaper}, gives
\[
|\partial_\mu P_u^a\Phi(s,\mu)(\xi)|+\|D_\xi\partial_\mu P_u^a\Phi(s,\mu)(\xi)\|\le C C_\Phi.
\]
Together with $|P_u^a\Phi(s,\mu)|\le C_\Phi$, this proves \eqref{eq:semigroup-uniform-bound-mkv}.
We also use the following linear-growth variant of \eqref{eq:semigroup-uniform-bound-mkv}. If
\[
\tnorm{\Phi_z(x,\nu)}_{\eta}\le C_\Phi(1+|x-z|)
\qquad
\text{for all }(x,\nu,\eta,z),
\]
then
\begin{equation}
\label{eq:semigroup-linear-growth-bound-mkv}
\tnorm{P_u^a\Phi_z(s,\mu)}_{\xi}
\le
C C_\Phi(1+|s-z|+u^{1/2}),
\qquad u\in[0,1].
\end{equation}
Indeed, the proof of \eqref{eq:semigroup-uniform-bound-mkv} gives the same estimate with $1+\mathbb E[|s_u-z|]$ on the right-hand side. Moreover,
\[
\mathbb E[|s_u-z|]\le |s-z|+\mathbb E[|s_u-s|]\le |s-z|+Cu^{1/2},
\]
where the last estimate follows from the boundedness of $b$ and $\sigma$ and the Burkholder-Davis-Gundy inequality. This proves \eqref{eq:semigroup-linear-growth-bound-mkv}.
\emph{Step 2: the remainder for $F_k$.}
Let $F_k(x,\nu)=x_k$ and set
\[
  \Phi_k^a(x,\nu):=(\mathscr L_{b,\Sigma}^a)^2F_k(x,\nu).
\]
Since $F_k$ is linear and the coefficients have bounded derivatives up
to order four, uniformly in $a$, applying $\mathscr L_{b,\Sigma}^a$
twice gives
\[
  \tnorm{\Phi_k^a(x,\nu)}_{\xi}\le C.
\]
By Step 1,
\[
  \tnorm{P_u^a\Phi_k^a(s,\mu)}_{\xi}\le C,
  \qquad u\in[0,1].
\]
Using \eqref{eq:RFk-definition-mkv},
\[
R_{F_k}^{a,\Delta t}(s,\mu)
=
\frac1{\Delta t}
\int_0^{\Delta t}\int_0^\tau
P_u^a\Phi_k^a(s,\mu)\,du\,d\tau .
\]
Therefore,
\[
\tnorm{R_{F_k}^{a,\Delta t}(s,\mu)}_{\xi}
\le
\frac1{\Delta t}
\int_0^{\Delta t}\int_0^\tau C\,du\,d\tau
\le
C\Delta t .
\]

\emph{Step 3: the structure of the remainder for $G_{ij}^s$.}
Fix first $z\in\R^d$ and define
\[
  G_{ij}^z(x,\nu):=(x_i-z_i)(x_j-z_j).
\]
A direct computation gives
\[
(\mathscr L_{b,\Sigma}^aG_{ij}^z)(x,\nu)
=
b_i(x,\nu,a)(x_j-z_j)
+
b_j(x,\nu,a)(x_i-z_i)
+
\Sigma_{ij}(x,\nu,a).
\]
This formula is the key point. The only terms that can grow in $x$ or
$z$ are the factors $x_j-z_j$ and $x_i-z_i$. All coefficients and all
their derivatives needed below are bounded by
\Cref{ass:MF-PhiBE-structure}.

Set
\[
  H_{ij}^{a,z}:=\mathscr L_{b,\Sigma}^aG_{ij}^z,
  \qquad
  \Phi_{ij}^{a,z}:=\mathscr L_{b,\Sigma}^aH_{ij}^{a,z}.
\]
From the displayed formula for $H_{ij}^{a,z}$, and since the operator
$\mathscr L_{b,\Sigma}^a$ does not depend on $z$, differentiating with
respect to $z$ gives, for every multi-index $\ell$ with $|\ell|\le2$,
\[
  \partial_z^\ell\Phi_{ij}^{a,z}
  =
  \mathscr L_{b,\Sigma}^a(\partial_z^\ell H_{ij}^{a,z}).
\]
Now $\partial_z^\ell H_{ij}^{a,z}$ is a sum of bounded coefficients
multiplied either by $1$ or by one of the factors $x_r-z_r$. Therefore,
using again the boundedness of the coefficients and of their derivatives,
\[
  \tnorm{\partial_z^\ell\Phi_{ij}^{a,z}(x,\nu)}_{\xi}
  \le
  C(1+|x-z|),
  \qquad |\ell|\le2.
\]
This is the desired bound: it follows from the explicit expression of
$\mathscr L_{b,\Sigma}^aG_{ij}^z$ and from the fact that applying
$\mathscr L_{b,\Sigma}^a$ adds only bounded coefficients and bounded
derivatives.
\emph{Step 4: integration of the bound for $G_{ij}^z$.}
By \eqref{eq:semigroup-linear-growth-bound-mkv} applied to $\partial_z^\ell\Phi_{ij}^{a,z}$, and using the bound proved in Step 3, we obtain
\begin{equation}
\label{eq:semigroup-bound-Gij-uniform-mkv}
\sum_{|\ell|\le2}
\tnorm{
P_u^a(\partial_z^\ell\Phi_{ij}^{a,z})(s,\mu)
}_{\xi}
\le
C(1+|s-z|+u^{1/2}),
\qquad u\in[0,1].
\end{equation}
Define
\[
R_{ij}^{a,\Delta t}(s,\mu,z)
:=
\frac1{\Delta t}
\int_0^{\Delta t}\int_0^\tau
P_u^a\Phi_{ij}^{a,z}(s,\mu)\,du\,d\tau .
\]
Differentiating under the integral with respect to $z$ and using \eqref{eq:semigroup-bound-Gij-uniform-mkv}, we get
\[
\sum_{|\ell|\le2}
\tnorm{
\partial_z^\ell R_{ij}^{a,\Delta t}(s,\mu,z)
}_{\xi}
\le
\frac1{\Delta t}
\int_0^{\Delta t}\int_0^\tau
C(1+|s-z|+u^{1/2})\,du\,d\tau .
\]
Since $\Delta t\le1$,
\begin{equation}
\label{eq:RGij-z-bound-uniform-mkv}
\sum_{|\ell|\le2}
\tnorm{
\partial_z^\ell R_{ij}^{a,\Delta t}(s,\mu,z)
}_{\xi}
\le
C(1+|s-z|)\Delta t .
\end{equation}

\emph{Step 5: set $z=s$.}
By definition,
\[
  R_{G_{ij}^s}^{a,\Delta t}(s,\mu)
  =
  R_{ij}^{a,\Delta t}(s,\mu,s).
\]
For the spatial derivatives, the chain rule gives
\[
\nabla_s R_{G_{ij}^s}^{a,\Delta t}(s,\mu)
=
\left.
(\nabla_s+\nabla_z)R_{ij}^{a,\Delta t}(s,\mu,z)
\right|_{z=s},
\]
and
\[
D_{ss}^2R_{G_{ij}^s}^{a,\Delta t}(s,\mu)
=
\left.
(D_{ss}^2+D_{sz}^2+D_{zs}^2+D_{zz}^2)
R_{ij}^{a,\Delta t}(s,\mu,z)
\right|_{z=s}.
\]
For the Lions derivatives, $z=s$ is independent of $\mu$, so
\[
\partial_\mu R_{G_{ij}^s}^{a,\Delta t}(s,\mu)(\xi)
=
\left.
\partial_\mu R_{ij}^{a,\Delta t}(s,\mu,z)(\xi)
\right|_{z=s},
\]
and the same identity holds for $D_\xi\partial_\mu$. Evaluating \eqref{eq:RGij-z-bound-uniform-mkv} at $z=s$ gives
\[
  \tnorm{R_{G_{ij}^s}^{a,\Delta t}(s,\mu)}_{\xi}
  \le C\Delta t .
\]
Together with the estimate for $F_k$, this proves
\eqref{eq:RFk-differentiated-bound-mkv}.

It remains to prove the local Lipschitz assertion. Fix
$\Delta t\in(0,1]$ and $R>0$, and write
\[
d(Z,\bar Z)
:=
|s-\bar s|+\mathcal W_2(\mu,\bar\mu)+|\xi-\bar\xi|,
\qquad
Z=(s,\mu,\xi),\quad
\bar Z=(\bar s,\bar\mu,\bar\xi).
\]
Under Assumption~\ref{ass:MF-PhiBE-structure}, the frozen-action flow
has the third-order state and mixed state--Lions variational derivatives
obtained by differentiating the variational equations in
\cite[Lemma~A.4]{firstpaper}. The coefficients of those equations are
bounded uniformly in $a$, because the derivatives of $b$ and $\sigma$
up to order four are bounded uniformly in $a$. Consequently, for
\[
D\in
\left\{
I,\ \nabla_s,\ D_{ss}^2,\
\partial_\mu(\,\cdot\,)(\xi),\
D_\xi\partial_\mu(\,\cdot\,)(\xi)
\right\},
\]
there exists $L_{R,\Delta t}<\infty$, independent of $a$, such that
\[
\begin{aligned}
&|D\hat b(s,\mu,a)-D\hat b(\bar s,\bar\mu,a)|\\
&\quad+
|D\hat\Sigma(s,\mu,a)-D\hat\Sigma(\bar s,\bar\mu,a)|
\le
L_{R,\Delta t}d(Z,\bar Z)
\end{aligned}
\]
whenever both triples belong to the radius-$R$ set above. For
$\hat\Sigma$, this follows by differentiating
\[
\hat\Sigma(s,\mu,a)
=
\frac1{\Delta t}
\mathbb E\!\left[(s_{\Delta t}^a-s)(s_{\Delta t}^a-s)^\top\right]
\]
and applying Cauchy--Schwarz together with the corresponding variational
flow estimates. The same local Lipschitz property holds for $b$ and
$\Sigma$ because their next derivatives are bounded under
Assumption~\ref{ass:MF-PhiBE-structure}. Finally,
\[
R_{F_k}^{a,\Delta t}=\hat b_k-b_k,
\qquad
R_{G_{ij}^s}^{a,\Delta t}=\hat\Sigma_{ij}-\Sigma_{ij},
\]
up to the immaterial sign convention in
\eqref{eq:local-expansion-general-mkv}. The preceding estimates therefore
give the asserted local Lipschitz property with a constant
$C_{R,\Delta t}$ independent of $a$.
\end{proof}
\begin{lemma}[First-order coefficient error]
\label{lem:first-order-coeff-error-regularity-norm-mkv}
Assume \Cref{ass:MF-PhiBE-structure}. Then there exists a constant
$C>0$, depending only on the constants in \Cref{ass:MF-PhiBE-structure}
and on $d$, such that, for every $a\in\mathcal A$, $\Delta t\in(0,1]$,
and $(s,\mu,\xi)\in\R^d\times\mathcal P_2(\R^d)\times\R^d$,
\begin{equation}
\label{eq:b-error-regularity-norm-mkv}
\tnorm{\hat b(s,\mu,a)-b(s,\mu,a)}_{\xi}\le C\Delta t,
\qquad
\tnorm{\hat\Sigma(s,\mu,a)-\Sigma(s,\mu,a)}_{\xi}\le C\Delta t.
\end{equation}
\end{lemma}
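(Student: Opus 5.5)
This lemma follows by combining the two preceding lemmas. The plan is to identify the coefficient errors with the remainders and then transfer the differentiated remainder bounds.

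Fix $a\in\mathcal A$, $\Delta t\in(0,1]$ and $(s,\mu)$. First I will check that \Cref{lem:local-dynkin-expansion-frozen-action-mkv} applies to $F_k(x,\nu)=x_k$ and $G^s_{ij}(x,\nu)=(x_i-s_i)(x_j-s_j)$. That is, I need $\mathscr L^a_{b,\Sigma}F_k$ and $\mathscr L^a_{b,\Sigma}G^s_{ij}$ to lie in $\mathcal C^{2,2}_{\mathrm{poly}}$, and the second iterates to have finite expectation along the frozen flow on $[0,\Delta t]$.

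For $F_k$ this is immediate. We have $\mathscr L^aF_k=b_k(\cdot,\cdot,a)$, which is bounded with bounded derivatives. Applying the operator once more produces products of $b,\Sigma$ with first and second spatial and Lions derivatives of $b_k$, all bounded by \Cref{ass:MF-PhiBE-structure}.

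For $G^s_{ij}$, the explicit formula
\[
\mathscr L^aG^s_{ij}(x,\nu)=b_i(x_j-s_j)+b_j(x_i-s_i)+\Sigma_{ij}
\]
shows at most linear growth in $x$ for the first iterate and for $(\mathscr L^a)^2G^s_{ij}$. Its expectation along the flow is therefore finite, since the bounded drift and diffusion give $\mathbb E|s_u-s|\le Cu^{1/2}$.

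With these checks in place, the final identities of \Cref{lem:local-dynkin-expansion-frozen-action-mkv} give
\[
\hat b_k(s,\mu,a)-b_k(s,\mu,a)=R^{a,\Delta t}_{F_k}(s,\mu),\qquad \hat\Sigma_{ij}(s,\mu,a)-\Sigma_{ij}(s,\mu,a)=R^{a,\Delta t}_{G^s_{ij}}(s,\mu).
\]
This is a pointwise identity in $(s,\mu)$, and it holds for every $(s,\mu)$.

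Since it holds on all of $\R^d\times\mathcal P_2(\R^d)$, it is an identity of functions of $(s,\mu)$. I can therefore apply $\nabla_s$, $D^2_{ss}$, $\partial_\mu(\cdot)(\xi)$ and $D_\xi\partial_\mu(\cdot)(\xi)$ to both sides. This requires that the derivatives of $\hat b,\hat\Sigma$ exist, which follows from the variational flow estimates used in \Cref{lem:differentiated-remainder-bounds-mkv}, and that those of $b,\Sigma$ exist, which is part of the assumption.

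One subtlety is that $G^s_{ij}$ depends on $s$ through the centering point. For this reason, the function $(s,\mu)\mapsto R^{a,\Delta t}_{G^s_{ij}}(s,\mu)$ must be read as $R^{a,\Delta t}_{ij}(s,\mu,z)|_{z=s}$, exactly as in Step 5 of the proof of \Cref{lem:differentiated-remainder-bounds-mkv}. Correspondingly, $\hat\Sigma(s,\mu,a)$ is defined with the same centering at the initial point $s$. With this reading, the identity holds as functions of $(s,\mu)$ and can be differentiated as stated.

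Applying \eqref{eq:RFk-differentiated-bound-mkv} componentwise then gives $\tnorm{\hat b_k-b_k}_\xi\le C\Delta t$ and $\tnorm{\hat\Sigma_{ij}-\Sigma_{ij}}_\xi\le C\Delta t$. Summing over the finitely many components $k$ and $(i,j)$, and using the equivalence of matrix norms in finite dimension, gives \eqref{eq:b-error-regularity-norm-mkv}. The constant depends only on $d$ and the constants of \Cref{ass:MF-PhiBE-structure}, and it is uniform in $a$, $\Delta t$ and $(s,\mu,\xi)$.

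The only genuinely delicate point is the one just described: justifying that the pointwise identity for $\hat\Sigma$ may be differentiated in $s$ despite the $s$-dependent centering. I would handle it by the $z$-lifting argument of Step 5, rather than differentiating $G^s_{ij}$ naively.
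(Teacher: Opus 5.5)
Your proposal is correct and follows the paper's proof. The paper likewise identifies $\hat b_k-b_k=R^{a,\Delta t}_{F_k}$ and $\hat\Sigma_{ij}-\Sigma_{ij}=R^{a,\Delta t}_{G^s_{ij}}$ via \Cref{lem:local-dynkin-expansion-frozen-action-mkv}, applies \eqref{eq:RFk-differentiated-bound-mkv} componentwise, and sums over the finitely many components. Your two extra points are left implicit in the paper's short proof: the check of the hypotheses of \Cref{lem:local-dynkin-expansion-frozen-action-mkv}, and the reading of the $s$-dependent centering through $z=s$, which is exactly Step 5 of \Cref{lem:differentiated-remainder-bounds-mkv}.
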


\begin{proof}
Fix $a\in\mathcal A$, $\Delta t\in(0,1]$, and
$(s,\mu,\xi)\in\R^d\times\mathcal P_2(\R^d)\times\R^d$.

For the drift, let $F_k(x,\nu):=x_k$. By
\eqref{eq:Fk-identities-mkv}, \eqref{eq:local-expansion-general-mkv},
and the definition of the remainder,
\[
\hat b_k(s,\mu,a)-b_k(s,\mu,a)
=
R_{F_k}^{a,\Delta t}(s,\mu),
\qquad k=1,\dots,d.
\]
Hence, by \Cref{lem:differentiated-remainder-bounds-mkv},
\[
\tnorm{\hat b_k(s,\mu,a)-b_k(s,\mu,a)}_{\xi}
=
\tnorm{R_{F_k}^{a,\Delta t}(s,\mu)}_{\xi}
\le C\Delta t.
\]
Summing over $k=1,\dots,d$ and using equivalence of norms in finite
dimension gives
\[
\tnorm{\hat b(s,\mu,a)-b(s,\mu,a)}_{\xi}\le C\Delta t.
\]

For the diffusion matrix, let
$G_{ij}^s(x,\nu):=(x_i-s_i)(x_j-s_j)$. By
\eqref{eq:Gij-identities-mkv}, \eqref{eq:local-expansion-general-mkv},
and the definition of the remainder,
\[
\hat\Sigma_{ij}(s,\mu,a)-\Sigma_{ij}(s,\mu,a)
=
R_{G_{ij}^s}^{a,\Delta t}(s,\mu),
\qquad i,j=1,\dots,d.
\]
Therefore, by \Cref{lem:differentiated-remainder-bounds-mkv},
\[
\tnorm{\hat\Sigma_{ij}(s,\mu,a)-\Sigma_{ij}(s,\mu,a)}_{\xi}
=
\tnorm{R_{G_{ij}^s}^{a,\Delta t}(s,\mu)}_{\xi}
\le C\Delta t.
\]
Summing over $i,j=1,\dots,d$ and using equivalence of norms in finite
dimension gives
\[
\tnorm{\hat\Sigma(s,\mu,a)-\Sigma(s,\mu,a)}_{\xi}\le C\Delta t.
\]
This proves \eqref{eq:b-error-regularity-norm-mkv}.
\end{proof}

\begin{lemma}[Evaluation regularity of the optimal policies]
\label{lem:optimal-policies-satisfy-A1-mkv}
Assume \Cref{ass:MF-PhiBE-structure} and \Cref{ass:optimal-policy-regularity}.
Let $\Pi_*$ be the family appearing in Assumption~\ref{ass:optimal-policy-regularity}, and let $b^\pi$ and $\Sigma^\pi$ be
defined as in \eqref{eq:aggregated_coeffs_mkv}. Set $\sigma^\pi(s,\mu):=\bigl(\Sigma^\pi(s,\mu)\bigr)^{1/2}$. Then, for every $\pi\in\Pi_*$, the coefficient pair
$(b^\pi,\sigma^\pi)$ satisfies the coefficient part of
Assumption~\ref{ass:_mkv_regularity}, with coefficient bounds and local Lipschitz constants uniform in
$\pi\in\Pi_*$.
\end{lemma}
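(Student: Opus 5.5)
The idea is to write the averaged coefficients as integrals of the pointwise coefficients against the policy density,
\[
b^\pi(s,\mu)=\int_{\mathcal A} b(s,\mu,a)\,p_\pi(a\mid s,\mu)\,\nu(da),\qquad
\Sigma^\pi(s,\mu)=\int_{\mathcal A}\Sigma(s,\mu,a)\,p_\pi(a\mid s,\mu)\,\nu(da).
\]
We then differentiate under the integral sign, using the product rule in $(s,\mu)$. The Lions derivative of a product $h(s,\mu,a)p_\pi(a\mid s,\mu)$ is $\partial_\mu h\,p_\pi+h\,\partial_\mu p_\pi$. Similarly, $D_\xi\partial_\mu$ acts on each factor, since $\xi$ enters only through the Lions derivative. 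The second spatial derivatives produce the cross terms $\nabla_s h\otimes\nabla_s p_\pi$.

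Every resulting integrand is a product of two factors. The first is a bounded derivative of $b$ or $\Sigma$ of order at most two; by \Cref{ass:MF-PhiBE-structure} it is bounded by a constant $K'$ uniform in $a$ (using $|b|,\|\sigma\|\le K$ and the bounded derivatives of $\sigma$ for $\Sigma=\sigma\sigma^\top$). The second is one of $p_\pi,\nabla_s p_\pi,D^2_{ss}p_\pi,\partial_\mu p_\pi,D_\xi\partial_\mu p_\pi$, which is dominated by $\ell_*(a)$ uniformly in $\pi\in\Pi_*$ by \Cref{ass:optimal-policy-regularity}. Hence dominated convergence, which requires $\ell_*\in L^1(\nu)$, justifies the interchange of derivative and integral. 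It also gives existence and joint continuity of all required derivatives of $b^\pi$ and $\Sigma^\pi$, with bounds $C\,K'\|\ell_*\|_{L^1(\nu)}$ independent of $\pi$. Local Lipschitz continuity follows the same way: we write the difference of the integrand at two points as a telescoping sum. One part uses the Lipschitz property of the coefficient derivatives (bounded higher derivatives, order up to four available) times $\ell_*$. The other part uses the bounded coefficient times the local Lipschitz constant of the density derivatives, which is dominated by a single $L^1(\nu)$ function.

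The main obstacle is that \Cref{ass:_mkv_regularity} is phrased for $\sigma^\pi=(\Sigma^\pi)^{1/2}$, not $\Sigma^\pi$. Here we use uniform ellipticity: $\Sigma(s,\mu,a)\succeq\alpha I_d$ for all $a$ and $\int p_\pi\,d\nu=1$ give $\Sigma^\pi\succeq\alpha I_d$. Also $\|\Sigma^\pi\|\le K^2$. On the compact set of symmetric matrices with spectrum in $[\alpha,K^2]$, the square root map is smooth (real analytic), with derivatives of all orders bounded by constants depending only on $\alpha,K$. This can be seen, for instance, from the Cauchy or resolvent integral representation
\[
M^{1/2}=\frac{1}{2\pi i}\oint_\Gamma z^{1/2}(z-M)^{-1}dz,
\]
with a contour $\Gamma$ around $[\alpha,K^2]$ avoiding $(-\infty,0]$. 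Composing with $\Sigma^\pi$ via the chain rule, including the Lions chain rule $\partial_\mu(f\circ\Sigma^\pi)=Df(\Sigma^\pi)[\partial_\mu\Sigma^\pi]$ and its $\xi$-derivative, yields bounded first and second spatial and Lions derivatives of $\sigma^\pi$, and their local Lipschitz property, all with constants uniform in $\pi\in\Pi_*$. The same holds for $b^\pi$ directly. This establishes the coefficient part of \Cref{ass:_mkv_regularity} uniformly over $\Pi_*$.
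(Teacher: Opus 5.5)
Your proposal is correct and follows essentially the same route as the paper. You differentiate the policy averages under the integral sign with the product rule and the common dominator $\ell_*\in L^1(\nu)$. You obtain the local Lipschitz bounds by the splitting $|AB-\bar A\bar B|\le|A-\bar A||B|+|\bar A||B-\bar B|$ together with the $L^1(\nu)$ Lipschitz dominator. You then pass from $\Sigma^\pi$ to $\sigma^\pi=(\Sigma^\pi)^{1/2}$ using $\alpha I_d\preceq\Sigma^\pi$, $\|\Sigma^\pi\|\le K^2$, the smoothness of the square-root map on that set of matrices, and the (Lions) chain rule. The only cosmetic difference is that you justify the smoothness of the square root through a resolvent integral, which the paper simply asserts.
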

\begin{proof}
Fix $\pi\in\Pi_*$. By \Cref{ass:optimal-policy-regularity}, we may write
$\pi(da\mid s,\mu)=p_\pi(a\mid s,\mu)\nu(da)$. By Assumption~\ref{ass:optimal-policy-regularity}, there exists a single function $\ell_*\in L^1(\nu)$, independent of $\pi\in\Pi_*$, such that $p_\pi$ and all the derivatives of $p_\pi$ appearing below are bounded by $\ell_*(a)$, uniformly in $(s,\mu,\xi)$ and $\pi\in\Pi_*$. More precisely, for $\nu$-a.e. $a\in\mathcal A$,
\[
|p_\pi(a\mid s,\mu)|
+\|\nabla_s p_\pi(a\mid s,\mu)\|
+\|D_{ss}^2p_\pi(a\mid s,\mu)\|
+|\partial_\mu p_\pi(a\mid s,\mu)(\xi)|
+\|D_\xi\partial_\mu p_\pi(a\mid s,\mu)(\xi)\|
\le \ell_*(a)
\]
In addition, for every $R>0$, the same derivatives are locally
Lipschitz in $(s,\mu,\xi)$ on the set
$|s|+\sqrt{m_2(\mu)}+|\xi|\le R$, with a Lipschitz constant bounded by
a function $\ell_{*,R}\in L^1(\nu)$ independent of $\pi\in\Pi_*$.

We first prove an auxiliary statement. Let $h=h(s,\mu,a)$ be scalar
valued and assume that $h$, $\nabla_s h$, $D_{ss}^2h$, $\partial_\mu h$,
and $D_\xi\partial_\mu h$ are bounded and locally Lipschitz in their
variables, uniformly with respect to $a$. Define
\[
h^\pi(s,\mu):=\int_{\mathcal A}h(s,\mu,a)p_\pi(a\mid s,\mu)\nu(da).
\]
Since the integrands obtained by differentiating with respect to $s$ and
$\mu$ are bounded by an $L^1(\nu)$ function, the differentiation under
the integral sign is justified. Thus
\[
\nabla_s h^\pi
=
\int_{\mathcal A}
\bigl[
(\nabla_s h)p_\pi+h\nabla_s p_\pi
\bigr]\,\nu(da),
\]
\[
D_{ss}^2h^\pi
=
\int_{\mathcal A}
\bigl[
(D_{ss}^2h)p_\pi
+(\nabla_s h)\otimes(\nabla_s p_\pi)
+(\nabla_s p_\pi)\otimes(\nabla_s h)
+hD_{ss}^2p_\pi
\bigr]\,\nu(da),
\]
and
\[
\partial_\mu h^\pi(\xi)
=
\int_{\mathcal A}
\bigl[
\partial_\mu h(\xi)p_\pi
+h\partial_\mu p_\pi(\xi)
\bigr]\,\nu(da),
\qquad
D_\xi\partial_\mu h^\pi(\xi)
=
\int_{\mathcal A}
\bigl[
D_\xi\partial_\mu h(\xi)p_\pi
+hD_\xi\partial_\mu p_\pi(\xi)
\bigr]\,\nu(da).
\]
Here all terms are evaluated at $(s,\mu,a)$, except the Lions
derivatives, which are evaluated at $(s,\mu,a,\xi)$. The uniform bounds
on $h$ and its derivatives, together with the bound by $\ell_*$, give
\[
\|\nabla_s h^\pi(s,\mu)\|
+\|D_{ss}^2h^\pi(s,\mu)\|
+|\partial_\mu h^\pi(s,\mu)(\xi)|
+\|D_\xi\partial_\mu h^\pi(s,\mu)(\xi)\|
\le K_* .
\]
We now check local Lipschitz continuity. Let
$Z=(s,\mu,\xi)$ and $\bar Z=(\bar s,\bar\mu,\bar\xi)$ belong to
$|s|+\sqrt{m_2(\mu)}+|\xi|\le R$ and
$|\bar s|+\sqrt{m_2(\bar\mu)}+|\bar\xi|\le R$. Set
\[
d(Z,\bar Z):=
|s-\bar s|+\mathcal W_2(\mu,\bar\mu)+|\xi-\bar\xi|.
\]
For each term in the four displayed formulas for
$\nabla_s h^\pi$, $D_{ss}^2h^\pi$, $\partial_\mu h^\pi$, and
$D_\xi\partial_\mu h^\pi$, we use
\[
|AB-\bar A\bar B|
\le |A-\bar A|\,|B|+|\bar A|\,|B-\bar B|.
\]
For instance, in the term
$\int_{\mathcal A}(\nabla_s h)p_\pi\,\nu(da)$, we have
\[
\begin{aligned}
&\left|
(\nabla_s h)(s,\mu,a)p_\pi(a\mid s,\mu)
-
(\nabla_s h)(\bar s,\bar\mu,a)p_\pi(a\mid \bar s,\bar\mu)
\right|\le
C_R d(Z,\bar Z)\ell_*(a)
+
C\,\ell_{*,R}(a)d(Z,\bar Z).
\end{aligned}
\]
The right-hand side is integrable with respect to $\nu(da)$. The same
argument applies to each product appearing in the four formulas above,
using the local Lipschitz bounds for $h$ and its derivatives and the
$L^1(\nu)$ local Lipschitz bounds for $p_\pi$ and its derivatives.
Therefore,
\[
\begin{aligned}
&\|\nabla_s h^\pi(s,\mu)-\nabla_s h^\pi(\bar s,\bar\mu)\|
+\|D_{ss}^2h^\pi(s,\mu)-D_{ss}^2h^\pi(\bar s,\bar\mu)\|
\\
&\quad
+|\partial_\mu h^\pi(s,\mu)(\xi)
-\partial_\mu h^\pi(\bar s,\bar\mu)(\bar\xi)|
+\|D_\xi\partial_\mu h^\pi(s,\mu)(\xi)
-D_\xi\partial_\mu h^\pi(\bar s,\bar\mu)(\bar\xi)\|\le C_{*,R}d(Z,\bar Z).
\end{aligned}
\]
This proves the auxiliary statement.

Applying the auxiliary statement to $h=b_i$, for $i=1,\dots,d$, gives
the required boundedness and local Lipschitz regularity of $b^\pi$.
Applying the same statement to $h=\Sigma_{ij}$, for
$i,j=1,\dots,d$, gives the corresponding properties for $\Sigma^\pi$.
Indeed, $\Sigma=\sigma\sigma^\top$, and \Cref{ass:MF-PhiBE-structure}
gives the required bounds and local Lipschitz regularity for $\sigma$
and its derivatives.

It remains to pass from $\Sigma^\pi$ to
$\sigma^\pi=(\Sigma^\pi)^{1/2}$. By \Cref{ass:MF-PhiBE-structure},
\[
v^\top\Sigma^\pi(s,\mu)v
=
\int_{\mathcal A}
v^\top\Sigma(s,\mu,a)v\,p_\pi(a\mid s,\mu)\nu(da)
\ge \alpha |v|^2.
\]
Since $\sigma$ is bounded, there exists $M>0$ such that
$\|\Sigma^\pi(s,\mu)\|\le M$ for all $(s,\mu)$. Hence the image of
$\Sigma^\pi$ is contained in
\[
\mathcal S_{\alpha,M}
:=
\{A\in\mathbb S^d:\alpha I_d\preceq A,\ \|A\|\le M\}.
\]
The principal square-root map $\Psi(A)=A^{1/2}$ is smooth on the positive
definite cone. Hence $D\Psi$ and $D^2\Psi$ are bounded and locally
Lipschitz on $\mathcal S_{\alpha,M}$.

Since $\sigma^\pi=\Psi(\Sigma^\pi)$, the chain rule gives
\[
\nabla_s\sigma^\pi
=
D\Psi(\Sigma^\pi)[\nabla_s\Sigma^\pi],
\]
\[
D_{ss}^2\sigma^\pi
=
D\Psi(\Sigma^\pi)[D_{ss}^2\Sigma^\pi]
+
D^2\Psi(\Sigma^\pi)[\nabla_s\Sigma^\pi,\nabla_s\Sigma^\pi],
\]
and
\[
\partial_\mu\sigma^\pi(\xi)
=
D\Psi(\Sigma^\pi)[\partial_\mu\Sigma^\pi(\xi)],
\qquad
D_\xi\partial_\mu\sigma^\pi(\xi)
=
D\Psi(\Sigma^\pi)[D_\xi\partial_\mu\Sigma^\pi(\xi)].
\]
The bounds and local Lipschitz regularity already proved for
$\Sigma^\pi$, together with the boundedness and local Lipschitz
regularity of $D\Psi$ and $D^2\Psi$ on $\mathcal S_{\alpha,M}$, imply
the same coefficient regularity for $\sigma^\pi$.Therefore $(b^\pi,\sigma^\pi)$ satisfies the coefficient part of Assumption~\ref{ass:_mkv_regularity}, with constants independent of $\pi\in\Pi_*$.
\end{proof}

\begin{lemma}[Averaged coefficient consistency and transfer of evaluation regularity]
\label{lem:averaged-coeff-consistency-and-regularity-mkv}
Assume \Cref{ass:MF-PhiBE-structure} and
\Cref{ass:optimal-policy-regularity}. Let $\Pi_*$ be as in
Assumption~\ref{ass:optimal-policy-regularity}. For $\pi\in\Pi_*$,
consider $\hat b^\pi$ and $\hat\Sigma^\pi$ defined in
\eqref{eq:averaged_b_sigma_hat}, and set
$\hat\sigma^\pi=(\hat\Sigma^\pi)^{1/2}$. Then there exists a constant
$C_*>0$, independent of $\pi\in\Pi_*$ and $\Delta t\in(0,1]$, such that
\begin{equation}
\label{eq:averaged-b-error-mkv}
\tnorm{\hat b^\pi(s,\mu)-b^\pi(s,\mu)}_\xi
+\tnorm{\hat\Sigma^\pi(s,\mu)-\Sigma^\pi(s,\mu)}_\xi
\le C_*\Delta t.
\end{equation}
Moreover, there exists $\Delta_*\in(0,1]$, independent of $\pi\in\Pi_*$, such that for every $\Delta t\in(0,\Delta_*]$ the pair $(\hat b^\pi,\hat\sigma^\pi)$ satisfies the coefficient part of Assumption~\ref{ass:_mkv_regularity}, with derivative bounds uniform in
$\pi\in\Pi_*$ and $\Delta t\in(0,\Delta_*]$, and with local Lipschitz
constants uniform in $\pi\in\Pi_*$ for each fixed $\Delta t$.
\end{lemma}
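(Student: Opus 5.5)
The proof has two parts: the consistency estimate \eqref{eq:averaged-b-error-mkv}, and the transfer of regularity to $(\hat b^\pi,\hat\sigma^\pi)$. Both reduce to results already proved, via the integral representation
\[
\hat b^\pi(s,\mu)-b^\pi(s,\mu)=\int_{\mathcal A}\bigl(\hat b-b\bigr)(s,\mu,a)\,p_\pi(a\mid s,\mu)\,\nu(da),
\]
and the analogous formula for $\hat\Sigma^\pi-\Sigma^\pi$.

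\emph{Consistency estimate.} Put $h:=\hat b_i-b_i$ (respectively $\hat\Sigma_{ij}-\Sigma_{ij}$). Differentiate under the integral exactly as in the auxiliary statement of the proof of \Cref{lem:optimal-policies-satisfy-A1-mkv}. Each component of $\tnorm{h^\pi}_\xi$ is then a finite sum of terms of the form $(D^\alpha h)(D^\beta p_\pi)$, where $D^\alpha,D^\beta$ range over the operators in \eqref{eq:evaluation-seminorm}. Three facts control these terms. First, \Cref{lem:first-order-coeff-error-regularity-norm-mkv} gives $|D^\alpha h|\le C\Delta t$ uniformly in $a$. Second, \Cref{ass:optimal-policy-regularity} gives $|D^\beta p_\pi|\le\ell_*(a)$ with $\ell_*\in L^1(\nu)$ independent of $\pi$. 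Third, the mixed second-order terms only involve $\nabla_s h$ and $\nabla_s p_\pi$, which are already covered by the seminorm. Consequently each term is bounded by $C\Delta t\,\ell_*(a)$, and integrating gives $C_*\Delta t$ with $C_*=C\|\ell_*\|_{L^1}$ times a combinatorial constant.

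Differentiation under the integral sign needs justification. Domination follows from the boundedness of $\hat b,\hat\Sigma$ and their derivatives uniformly in $a$: use \Cref{lem:first-order-coeff-error-regularity-norm-mkv} together with the boundedness of $b,\Sigma$ from \Cref{ass:MF-PhiBE-structure}. Continuity of the differentiated integrands follows from the local Lipschitz statement in \Cref{lem:differentiated-remainder-bounds-mkv}.

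\emph{Regularity of $\hat b^\pi$.} Write $\hat b^\pi=b^\pi+(\hat b^\pi-b^\pi)$. The first summand satisfies the coefficient part of \Cref{ass:_mkv_regularity} by \Cref{lem:optimal-policies-satisfy-A1-mkv}. The second has derivatives bounded by $C_*\Delta t\le C_*$. Local Lipschitz continuity of $D(\hat b^\pi-b^\pi)$, for fixed $\Delta t$, follows from the same product argument as in \Cref{lem:optimal-policies-satisfy-A1-mkv}. There one combines the local Lipschitz bound $C_{R,\Delta t}$ of \Cref{lem:differentiated-remainder-bounds-mkv} (independent of $a$) with the $L^1$ local Lipschitz dominants $\ell_{*,R}$ of $p_\pi$. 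The resulting constants depend on $\Delta t$ only through $C_{R,\Delta t}$; the derivative bounds are uniform in $\Delta t$. The same argument gives the regularity of $\hat\Sigma^\pi$.

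\emph{Passing to $\hat\sigma^\pi=(\hat\Sigma^\pi)^{1/2}$.} This is the main obstacle, since the square root is smooth only on a positive definite set with uniform bounds, so ellipticity and boundedness of $\hat\Sigma^\pi$ must be secured first. The zeroth-order part of \eqref{eq:averaged-b-error-mkv} gives $\|\hat\Sigma^\pi-\Sigma^\pi\|\le C_*\Delta t$. Since $\Sigma^\pi\succeq\alpha I_d$ (shown in the proof of \Cref{lem:optimal-policies-satisfy-A1-mkv}), we get $\hat\Sigma^\pi\succeq(\alpha-C_*\Delta t)I_d\succeq\frac\alpha2 I_d$ once $\Delta t\le\Delta_*:=\min\{1,\alpha/(2C_*)\}$. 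In addition $\|\hat\Sigma^\pi\|\le M+C_*$. Hence $\hat\Sigma^\pi$ takes values in $\mathcal S_{\alpha/2,M+C_*}$, on which $\Psi(A)=A^{1/2}$ has bounded, locally Lipschitz first and second differentials. Applying the chain-rule formulas for $\nabla_s$, $D^2_{ss}$, $\partial_\mu$ and $D_\xi\partial_\mu$ of $\Psi(\hat\Sigma^\pi)$ from the end of the proof of \Cref{lem:optimal-policies-satisfy-A1-mkv} then yields bounded derivatives, uniform in $\pi\in\Pi_*$ and $\Delta t\le\Delta_*$, and locally Lipschitz derivatives with constants depending on $\Delta t$. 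This completes the transfer of \Cref{ass:_mkv_regularity}.
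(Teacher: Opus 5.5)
Your proposal is correct and follows essentially the same route as the paper. Like the paper, you represent the averaged error as an integral against $p_\pi$, differentiate by the product rule from \Cref{lem:optimal-policies-satisfy-A1-mkv}, apply the uniform $C\Delta t$ bound of \Cref{lem:first-order-coeff-error-regularity-norm-mkv}, split $\hat b^\pi=b^\pi+(\hat b^\pi-b^\pi)$ to get the derivative bounds, and choose $C_*\Delta_*\le\alpha/2$ so that $\hat\Sigma^\pi$ stays in a compact positive-definite set where the square root is smooth. The only minor difference is that you get local Lipschitz continuity of the derivatives by citing the local-Lipschitz clause of \Cref{lem:differentiated-remainder-bounds-mkv} for $\hat b-b$ and $\hat\Sigma-\Sigma$, whereas the paper re-derives it for $\hat b,\hat\Sigma$ from third-order variational derivatives of the frozen-action flow; this is the same underlying argument, just not repeated.
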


\begin{proof}
Fix $\pi\in\Pi_*$. By \Cref{ass:optimal-policy-regularity}, we may write
$\pi(da\mid s,\mu)=p_\pi(a\mid s,\mu)\nu(da)$. There exists a common function $\ell_*\in L^1(\nu)$ such that $p_\pi$ and all derivatives of $p_\pi$ appearing below are bounded by $\ell_*(a)$, uniformly in $(s,\mu,\xi)$ and $\pi\in\Pi_*$.

Set
\[
\delta b(s,\mu,a):=\hat b(s,\mu,a)-b(s,\mu,a),
\qquad
\delta\Sigma(s,\mu,a):=\hat\Sigma(s,\mu,a)-\Sigma(s,\mu,a).
\]
By \Cref{lem:first-order-coeff-error-regularity-norm-mkv}, for every
component $i,j$,
\[
\tnorm{\delta b_i(s,\mu,a)}_\xi
+
\tnorm{\delta\Sigma_{ij}(s,\mu,a)}_\xi
\le C\Delta t,
\]
uniformly in $(s,\mu,\xi,a)$.

For each component of the averaged drift error,
\[
\hat b_i^\pi(s,\mu)-b_i^\pi(s,\mu)
=
\int_{\mathcal A}\delta b_i(s,\mu,a)p_\pi(a\mid s,\mu)\nu(da).
\]
Differentiating this identity with respect to $s$ and $\mu$ gives the
same product formulas used in the proof of
\Cref{lem:optimal-policies-satisfy-A1-mkv}, with $h$ replaced by
$\delta b_i$. Since $\delta b_i$ and the derivatives entering
$\tnorm{\cdot}_\xi$ are bounded by $C\Delta t$, and since $p_\pi$ and
its corresponding derivatives are bounded by $\ell_*\in L^1(\nu)$, we
obtain
\[
\tnorm{\hat b_i^\pi(s,\mu)-b_i^\pi(s,\mu)}_\xi
\le C_*\Delta t.
\]
Summing over $i=1,\dots,d$ gives \eqref{eq:averaged-b-error-mkv}.

The proof of \eqref{eq:averaged-b-error-mkv} is the same. Indeed,
\[
\hat\Sigma_{ij}^\pi(s,\mu)-\Sigma_{ij}^\pi(s,\mu)
=
\int_{\mathcal A}
\delta\Sigma_{ij}(s,\mu,a)p_\pi(a\mid s,\mu)\nu(da),
\]
and the same differentiation formulas, now with
$\delta\Sigma_{ij}$ in place of $\delta b_i$, give
\[
\tnorm{\hat\Sigma_{ij}^\pi(s,\mu)-\Sigma_{ij}^\pi(s,\mu)}_\xi
\le C_*\Delta t.
\]
Summing over $i,j=1,\dots,d$ proves
\eqref{eq:averaged-b-error-mkv}.

We now verify the coefficient regularity of $\hat b^\pi$. By
\Cref{lem:optimal-policies-satisfy-A1-mkv}, $b^\pi$ satisfies the
coefficient part of Assumption~\ref{ass:_mkv_regularity}. Together with
\eqref{eq:averaged-b-error-mkv}, this proves the uniform boundedness of
all derivatives of $\hat b^\pi$ required in that assumption.

It remains to verify their local Lipschitz continuity. Fix
$\Delta t\in(0,1]$ and $R>0$, and write
\[
d\bigl((s,\mu,\xi),(\bar s,\bar\mu,\bar\xi)\bigr)
:=
|s-\bar s|+\mathcal W_2(\mu,\bar\mu)+|\xi-\bar\xi|.
\]
Under Assumption~\ref{ass:MF-PhiBE-structure}, the frozen-action flow is
three times differentiable with respect to the initial state and has the
corresponding mixed state--Lions derivatives required below. These
derivatives are obtained by differentiating the first- and second-order
variational equations used in the proof of
\Cref{lem:differentiated-remainder-bounds-mkv}. Since the coefficients
and their spatial and Lions derivatives up to order four are uniformly
bounded, the resulting variational equations have bounded coefficients.
Consequently, for every differential operator
\[
D\in
\left\{
\nabla_s,\ D_{ss}^2,\ \partial_\mu(\,\cdot\,)(\xi),\
D_\xi\partial_\mu(\,\cdot\,)(\xi)
\right\},
\]
there exists $L_{R,\Delta t}<\infty$, independent of $a$, such that
\[
\begin{aligned}
&\left|
D\hat b(s,\mu,a)-D\hat b(\bar s,\bar\mu,a)
\right|
\\
&\qquad\le
L_{R,\Delta t}\,
d\bigl((s,\mu,\xi),(\bar s,\bar\mu,\bar\xi)\bigr)
\end{aligned}
\]
whenever
\[
|s|+\sqrt{m_2(\mu)}+|\xi|
+
|\bar s|+\sqrt{m_2(\bar\mu)}+|\bar\xi|
\le R.
\]
The same conclusion holds for $\hat\Sigma$. Indeed, differentiating
\[
\hat\Sigma(s,\mu,a)
=
\frac1{\Delta t}
\mathbb E\!\left[
(s_{\Delta t}-s)(s_{\Delta t}-s)^\top
\right]
\]
produces finite sums of products of increments of the frozen-action
flow and its first-, second-, and third-order variational derivatives.
The required local Lipschitz estimates follow from the corresponding
variational-flow estimates, the moment bound
\[
\mathbb E|s_t-s|^p\le C_p t^{p/2},
\qquad 0\le t\le1,
\]
and the Cauchy--Schwarz inequality.

Differentiating the policy averages under the integral sign and using
the common $L^1(\nu)$ local Lipschitz dominator from
Assumption~\ref{ass:optimal-policy-regularity} now gives
\[
\begin{aligned}
&\left|
D\hat b^\pi(s,\mu)-D\hat b^\pi(\bar s,\bar\mu)
\right|
\\
&\quad+
\left|
D\hat\Sigma^\pi(s,\mu)-D\hat\Sigma^\pi(\bar s,\bar\mu)
\right|
\\
&\qquad\le
L_{*,R,\Delta t}\,
d\bigl((s,\mu,\xi),(\bar s,\bar\mu,\bar\xi)\bigr),
\end{aligned}
\]
uniformly in $\pi\in\Pi_*$. Thus $\hat b^\pi$ and
$\hat\Sigma^\pi$ have the local Lipschitz regularity required in
Assumption~\ref{ass:_mkv_regularity}.

It remains to treat $\hat\sigma^\pi$. From
\eqref{eq:averaged-b-error-mkv},
\[
\|\hat\Sigma^\pi(s,\mu)-\Sigma^\pi(s,\mu)\|\le C_*\Delta t.
\]
By \Cref{lem:optimal-policies-satisfy-A1-mkv},
$\Sigma^\pi(s,\mu)\succeq \alpha I_d$. Choose
$\Delta_*\in(0,1]$ such that $C_*\Delta_*\le\alpha/2$. Then, for
$\Delta t\in(0,\Delta_*]$,
\[
\hat\Sigma^\pi(s,\mu)\succeq \frac{\alpha}{2}I_d.
\]
Moreover, since $\Sigma^\pi$ is uniformly bounded and
\eqref{eq:averaged-b-error-mkv} holds, there exists $M_*>0$ such
that $\|\hat\Sigma^\pi(s,\mu)\|\le M_*$ for all $(s,\mu)$. Thus the
image of $\hat\Sigma^\pi$ is contained in
\[
\mathcal S_{\alpha/2,M_*}
:=
\{A\in\mathbb S^d:\tfrac{\alpha}{2}I_d\preceq A,\ \|A\|\le M_*\}.
\]
The principal square-root map $\Psi(A)=A^{1/2}$ is smooth on the positive
definite cone. Hence $D\Psi$ and $D^2\Psi$ are bounded and locally
Lipschitz on $\mathcal S_{\alpha/2,M_*}$.

Since $\hat\sigma^\pi=\Psi(\hat\Sigma^\pi)$, the chain rule gives
\[
\nabla_s\hat\sigma^\pi
=
D\Psi(\hat\Sigma^\pi)[\nabla_s\hat\Sigma^\pi],
\]
\[
D_{ss}^2\hat\sigma^\pi
=
D\Psi(\hat\Sigma^\pi)[D_{ss}^2\hat\Sigma^\pi]
+
D^2\Psi(\hat\Sigma^\pi)[\nabla_s\hat\Sigma^\pi,\nabla_s\hat\Sigma^\pi],
\]
and
\[
\partial_\mu\hat\sigma^\pi(\xi)
=
D\Psi(\hat\Sigma^\pi)[\partial_\mu\hat\Sigma^\pi(\xi)],
\qquad
D_\xi\partial_\mu\hat\sigma^\pi(\xi)
=
D\Psi(\hat\Sigma^\pi)[D_\xi\partial_\mu\hat\Sigma^\pi(\xi)].
\]
The bounds and local Lipschitz regularity of the derivatives of
$\hat\Sigma^\pi$, together with the boundedness and local Lipschitz
regularity of $D\Psi$ and $D^2\Psi$ on $\mathcal S_{\alpha/2,M_*}$,
imply that $\hat\sigma^\pi$ satisfies the coefficient regularity required
in \Cref{ass:_mkv_regularity}. Therefore
$(\hat b^\pi,\hat\sigma^\pi)$ satisfies the coefficient part of
\Cref{ass:_mkv_regularity}.
\end{proof}

\begin{proof}[\textbf{Proof of
\Cref{thm:error_V_eval_optimal_pure_mkv}}]
Let $\Pi_*:=\{\pi^*\}\cup \{\hat\pi_{\Delta t}^*:0<\Delta t\le\Delta_0\}$. By Lemmas~\ref{lem:optimal-policies-satisfy-A1-mkv} and \ref{lem:averaged-coeff-consistency-and-regularity-mkv}, after decreasing $\Delta_0$ if necessary, the exact and estimated coefficient pairs satisfy Assumption~\ref{ass:_mkv_regularity}. Their derivative bounds are uniform in $\pi\in\Pi_*$ and $0<\Delta t\le\Delta_0$, whereas their local Lipschitz constants are uniform in $\pi$ for each fixed $\Delta t$. Since the HJB threshold and the derivative estimate depend only on the uniform derivative and growth bounds, the thresholds, the constants in \eqref{eq:value-derivative-bound-mkv}, and the second-moment exponent may be chosen as common constants $\bar\beta_*$, $C_*$ and $\beta_0(2)$. Fix $\beta>\max\{\bar\beta_*,\beta_0(2)\}$ and $\Delta t\in(0,\Delta_0]$.
\medskip

\noindent
\textbf{Step 1: fixed-policy estimate.}
Fix $\pi\in\Pi_*$. By the choice of $\bar\beta_*$, the functions $\mathcal V^\pi$ and $\hat{\mathcal V}^\pi$ are the unique classical solutions of
\[
(\mathcal L_{b,\Sigma}^{\pi}-\beta)\mathcal V^\pi+r_\lambda^\pi=0
\]
and
\[
(\mathcal L_{\hat b,\hat\Sigma}^{\pi}-\beta)\hat{\mathcal V}^\pi+r_\lambda^\pi=0,
\]
respectively. Set $w^\pi:=\mathcal V^\pi-\hat{\mathcal V}^\pi$. Subtracting the two equations gives
\begin{equation}
\label{eq:w-linear-HJB-fixed-policy-mkv}
(\mathcal L_{b,\Sigma}^{\pi}-\beta)w^\pi+
(\mathcal L_{b,\Sigma}^{\pi}-\mathcal L_{\hat b,\hat\Sigma}^{\pi})
\hat{\mathcal V}^\pi=0.
\end{equation}

Let $(\mu_t)_{t\ge0}$ be the law flow generated by \eqref{eq:MKV_only_SDE} under the policy $\pi$, with $\mu_0=\mu$. Applying the It\^o-Lions formula to $e^{-\beta t}w^\pi(\mu_t)$ and using \eqref{eq:w-linear-HJB-fixed-policy-mkv}, we obtain, for every $T>0$,
\[
w^\pi(\mu)=e^{-\beta T}w^\pi(\mu_T)+\int_0^T e^{-\beta t}\Bigl[(\mathcal L_{b,\Sigma}^{\pi}-\mathcal L_{\hat b,\hat\Sigma}^{\pi})\hat{\mathcal V}^\pi\Bigr](\mu_t)\,dt.
\]
By the polynomial growth of $\mathcal V^\pi$ and $\hat{\mathcal V}^\pi$, together with the second-moment estimate associated with $\beta_0(2)$,
\[
e^{-\beta T}|w^\pi(\mu_T)|\le C e^{-(\beta-\beta_0(2))T}(1+m_2(\mu))\longrightarrow0,
\qquad T\to\infty.
\]
Hence
\begin{equation}
\label{eq:w-representation-fixed-policy-mkv}
w^\pi(\mu)=\int_0^\infty e^{-\beta t}\Bigl[(\mathcal L_{b,\Sigma}^{\pi}-\mathcal L_{\hat b,\hat\Sigma}^{\pi})\hat{\mathcal V}^\pi\Bigr](\mu_t)\,dt.
\end{equation}

For $\nu\in\mathcal P_2(\R^d)$, by \eqref{eq:L-pi-mkv} and \eqref{eq:estimated-generator-mkv},
\[
\begin{aligned}
\Bigl[(\mathcal L_{b,\Sigma}^{\pi}-\mathcal L_{\hat b,\hat\Sigma}^{\pi})\hat{\mathcal V}^\pi\Bigr](\nu)
&=\int_{\R^d}\bigl(b^\pi(x,\nu)-\hat b^\pi(x,\nu)\bigr)\cdot\partial_\mu\hat{\mathcal V}^\pi(\nu)(x)\,\nu(dx) \\
&\quad+\frac12\int_{\R^d}\bigl(\Sigma^\pi(x,\nu)-\hat\Sigma^\pi(x,\nu)\bigr):D_x\partial_\mu\hat{\mathcal V}^\pi(\nu)(x)\,\nu(dx).
\end{aligned}
\]
By \Cref{lem:averaged-coeff-consistency-and-regularity-mkv},
\[
\|b^\pi(x,\nu)-\hat b^\pi(x,\nu)\|+\|\Sigma^\pi(x,\nu)-\hat\Sigma^\pi(x,\nu)\|\le C_*\Delta t.
\]
Moreover, the derivative estimate in \Cref{thm:MF_eval_infinite_mkv}, applied to the MF-PhiBE, gives
\[
|\partial_\mu\hat{\mathcal V}^\pi(\nu)(x)|+\|D_x\partial_\mu\hat{\mathcal V}^\pi(\nu)(x)\|\le\frac{C_*}{\beta-\bar\beta_*}(1+|x|^2+m_2(\nu)).
\]
Combining the previous two estimates yields
\[
\begin{aligned}
\left|\Bigl[(\mathcal L_{b,\Sigma}^{\pi}-\mathcal L_{\hat b,\hat\Sigma}^{\pi})\hat{\mathcal V}^\pi\Bigr](\nu)\right|
&\le\frac{C_*\Delta t}{\beta-\bar\beta_*}
\int_{\R^d}(1+|x|^2+m_2(\nu))\,\nu(dx)\le\frac{C_*}{\beta-\bar\beta_*}(1+m_2(\nu))\Delta t.
\end{aligned}
\]
Using this bound in \eqref{eq:w-representation-fixed-policy-mkv}, we get
\[
|w^\pi(\mu)|\le\frac{C_*\Delta t}{\beta-\bar\beta_*}\int_0^\infty e^{-\beta t}(1+m_2(\mu_t))\,dt.
\]
The moment estimate associated with $\beta_0(2)$ gives
\[
1+m_2(\mu_t)\le C e^{\beta_0(2)t}(1+m_2(\mu)).
\]
Since $\beta>\beta_0(2)$, we obtain
\begin{equation}
\label{eq:fixed-policy-error-final-mkv}
|\mathcal V^\pi(\mu)-\hat{\mathcal V}^\pi(\mu)|
\le
\frac{C_*}{(\beta-\bar\beta_*)(\beta-\beta_0(2))}
(1+m_2(\mu))\Delta t,
\qquad
\pi\in\Pi_*.
\end{equation}

\medskip

\noindent
\textbf{Step 2: optimality argument.}
By optimality of $\pi^*$ for the exact problem,
\[
\mathcal V^*(\mu)=\mathcal V^{\pi^*}(\mu).
\]
For the fixed value of $\Delta t$ considered above, optimality of
$\hat\pi_{\Delta t}^*$ for the MF-PhiBE gives
\begin{equation}
\label{eq:estimated-optimality-proof-mkv}
\hat{\mathcal V}^{\hat\pi_{\Delta t}^*}(\mu)
\ge
\hat{\mathcal V}^{\pi^*}(\mu).
\end{equation}
Therefore,
\[
\begin{aligned}
\mathcal V^*(\mu)
-\mathcal V^{\hat\pi_{\Delta t}^*}(\mu)
&=
\bigl(\mathcal V^{\pi^*}-\hat{\mathcal V}^{\pi^*}\bigr)(\mu)
\\
&\quad+
\bigl(
\hat{\mathcal V}^{\pi^*}
-\hat{\mathcal V}^{\hat\pi_{\Delta t}^*}
\bigr)(\mu)
\\
&\quad+
\bigl(
\hat{\mathcal V}^{\hat\pi_{\Delta t}^*}
-\mathcal V^{\hat\pi_{\Delta t}^*}
\bigr)(\mu).
\end{aligned}
\]
The middle term is nonpositive by
\eqref{eq:estimated-optimality-proof-mkv}. Applying
\eqref{eq:fixed-policy-error-final-mkv} to
$\pi=\pi^*$ and $\pi=\hat\pi_{\Delta t}^*$ gives
\[
\mathcal V^*(\mu)
-\mathcal V^{\hat\pi_{\Delta t}^*}(\mu)
\le
\frac{2C_*}{(\beta-\bar\beta_*)(\beta-\beta_0(2))}
(1+m_2(\mu))\Delta t.
\]
Absorbing the factor $2$ into $C_*$ gives the asserted upper bound.
Finally, since $\hat\pi_{\Delta t}^*\in\Pi_{\mathrm{add}}$, the
definition of $\mathcal V^*$ gives
\[
0
\le
\mathcal V^*(\mu)
-\mathcal V^{\hat\pi_{\Delta t}^*}(\mu).
\]
This proves \eqref{eq:error_V_optimal_pure_mkv}.
\end{proof}

\subsection{Proofs for Section~\ref{sec:LQR}}
\label{app:LQR-proofs}

\begin{lemma}[First-order approximation error of the estimated matrices]
\label{lem:first_order_coeff_error}
Let $c_A:=\|A\|,$ $c_{\tilde A}:=\|\tilde A\|,$ and $c_{\bar A}:=\|\bar A\|.$ Then
\[
    \|\hat A-A\|
    \le
    \frac12 c_A^2 e^{c_A\Delta t}\Delta t,\qquad 
    \|\hat{\bar A}-\bar A\|
    \le
    \frac12 c_{\bar A}(c_A+c_{\tilde A})
    e^{(c_A+c_{\tilde A})\Delta t}\Delta t,
\]
and
\[
    \|\hat B-B\|
    \le
    \frac12
    (c_A+c_{\bar A})e^{(c_A+c_{\tilde A})\Delta t}\|B\|\Delta t.
\]
\end{lemma}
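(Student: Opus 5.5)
The plan is to write each estimated matrix as an integral average and compare it with its continuous-time counterpart by Taylor expansion of the matrix exponential with integral remainder. Throughout, we use the elementary bound $\|e^{Mt}\|\le e^{\|M\|t}$ for $t\ge0$ and the notation $c_A,c_{\tilde A},c_{\bar A}$ of the statement.

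\emph{The matrix $\hat A$.} Write
\[
e^{A\Delta t}-I_d-A\Delta t=\int_0^{\Delta t}(\Delta t-r)A^2e^{Ar}\,dr,
\]
so that
\[
\hat A-A=\frac{1}{\Delta t}\int_0^{\Delta t}(\Delta t-r)A^2e^{Ar}\,dr .
\]
Bounding the integrand by $(\Delta t-r)c_A^2e^{c_A\Delta t}$ and using $\int_0^{\Delta t}(\Delta t-r)\,dr=\Delta t^2/2$ gives the first estimate.

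\emph{The matrix $\hat{\bar A}$.} Write
\[
\hat{\bar A}-\bar A=\frac1{\Delta t}\int_0^{\Delta t}\bigl(e^{A(\Delta t-\tau)}\bar Ae^{\tilde A\tau}-\bar A\bigr)\,d\tau .
\]
For fixed $\tau$, set $g(\theta):=e^{A(\Delta t-\tau)\theta}\bar Ae^{\tilde A\tau\theta}$ for $\theta\in[0,1]$, so that the integrand equals $g(1)-g(0)$. Since
\[
g'(\theta)=e^{A(\Delta t-\tau)\theta}\bigl((\Delta t-\tau)A\bar A+\tau\bar A\tilde A\bigr)e^{\tilde A\tau\theta},
\]
we get the bound
\[
\|g(1)-g(0)\|\le c_{\bar A}\bigl((\Delta t-\tau)c_A+\tau c_{\tilde A}\bigr)e^{(c_A+c_{\tilde A})\Delta t}.
\]
Integrating in $\tau$ gives
\[
\int_0^{\Delta t}\bigl((\Delta t-\tau)c_A+\tau c_{\tilde A}\bigr)\,d\tau=\tfrac{\Delta t^2}{2}(c_A+c_{\tilde A}),
\]
and dividing by $\Delta t$ yields exactly the stated constant.

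\emph{The matrix $\hat B$.} The integrand is $e^{A(\Delta t-\tau)}(\bar AM_{\tilde A,\tau}+I)B$. Subtract $B$ and split the difference as
\[
\bigl(e^{A(\Delta t-\tau)}-I\bigr)B+e^{A(\Delta t-\tau)}\bar AM_{\tilde A,\tau}B .
\]
For the first term, use $\|e^{A(\Delta t-\tau)}-I\|\le c_A(\Delta t-\tau)e^{c_A\Delta t}$. For the second, use $\|M_{\tilde A,\tau}\|\le\tau e^{c_{\tilde A}\tau}$, which bounds it by $c_{\bar A}\tau e^{(c_A+c_{\tilde A})\Delta t}\|B\|$. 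Integrating gives
\[
\int_0^{\Delta t}\bigl(c_A(\Delta t-\tau)+c_{\bar A}\tau\bigr)\,d\tau=\tfrac{\Delta t^2}{2}(c_A+c_{\bar A}).
\]
After bounding $e^{c_A\Delta t}\le e^{(c_A+c_{\tilde A})\Delta t}$ and dividing by $\Delta t$, this yields the third estimate.

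None of these steps is a genuine obstacle. The only point requiring care is the bookkeeping of the exponential factors, so that each constant matches the one stated: $e^{c_A\Delta t}$ for $\hat A$, and $e^{(c_A+c_{\tilde A})\Delta t}$ for $\hat{\bar A}$ and $\hat B$. In particular, for $\hat{\bar A}$ the interpolation in $\theta$, rather than a crude splitting of the two exponentials, is what gives the symmetric factor $c_A+c_{\tilde A}$.
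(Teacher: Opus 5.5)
Your proof is correct and is essentially the paper's argument: the same elementary bounds $\|e^{Mt}-I\|\le t\|M\|e^{\|M\|t}$ and $\|M_{\tilde A,\tau}\|\le\tau e^{c_{\tilde A}\tau}$, and the identical splitting for $\hat B$. The only deviation is for $\hat{\bar A}$, where the paper simply adds and subtracts $\bar A e^{\tilde A\tau}$ instead of interpolating in $\theta$; contrary to your closing remark, that plain splitting already yields the same factor $c_{\bar A}\bigl((\Delta t-\tau)c_A+\tau c_{\tilde A}\bigr)e^{(c_A+c_{\tilde A})\Delta t}$, so the interpolation is a matter of taste rather than necessity.
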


\begin{proof}
For $\hat A$, we have
\begin{align}\label{eq:estimate_A_hat_A}
     \hat A-A
    =
    \frac{e^{A\Delta t}-I_d-A\Delta t}{\Delta t}
    =
    \frac1{\Delta t}\int_0^{\Delta t}(e^{A\tau}-I_d)A\,d\tau.
\end{align}
Taking norm in \eqref{eq:estimate_A_hat_A}, and using that $\|e^{A\tau}-I_d\|
\le
\tau c_A e^{c_A\tau},$ we obtain
\[
    \|\hat A-A\|
    \le
    \frac{c_A}{\Delta t}\int_0^{\Delta t}\tau c_A e^{c_A\tau}\,d\tau
    \le
    \frac{c_A^2}{\Delta t}e^{c_A\Delta t}\int_0^{\Delta t}\tau\,d\tau
    =
    \frac12 c_A^2 e^{c_A\Delta t}\Delta t.
\]

For $\hat{\bar A}$, we write
\[
    \hat{\bar A}-\bar A
    =
    \frac1{\Delta t}\int_0^{\Delta t}
    \Bigl(
        e^{A(\Delta t-\tau)}\bar A e^{\tilde A\tau}-\bar A
    \Bigr)\,d\tau.
\]
Adding and subtracting $\bar A e^{\tilde A\tau}$ and taking norm, we have
\begin{align}\label{eq:estimate_A_hat_A_tilde}
        \|\hat{\bar A}-\bar A\|
    \le
    \frac{c_{\bar A}}{\Delta t}\int_0^{\Delta t}
    \Bigl[
        \|e^{A(\Delta t-\tau)}-I_d\|\,\|e^{\tilde A\tau}\|
        +
        \|e^{\tilde A\tau}-I_d\|
    \Bigr]\,d\tau.
\end{align}
Combining
\[
\|e^{A(\Delta t-\tau)}-I_d\|
\le
(\Delta t-\tau)c_A e^{c_A(\Delta t-\tau)},
\qquad
\|e^{\tilde A\tau}\|\le e^{c_{\tilde A}\tau},\quad
\text{and}\quad\|e^{\tilde A\tau}-I_d\|
\le
\tau c_{\tilde A}e^{c_{\tilde A}\tau},
\]
with \eqref{eq:estimate_A_hat_A_tilde}, we conclude that
\[
    \|\hat{\bar A}-\bar A\|
    \le
    \frac{c_{\bar A}}{\Delta t}
    \int_0^{\Delta t}
    \Bigl[
        c_A(\Delta t-\tau)
        +
        c_{\tilde A}\tau
    \Bigr]
    e^{(c_A+c_{\tilde A})\Delta t}\,d\tau\le
    \frac12 c_{\bar A}(c_A+c_{\tilde A})
    e^{(c_A+c_{\tilde A})\Delta t}\Delta t.
\]

Finally, for $B$ we have
\begin{align}\label{eq:error_B_bhat}
        \hat B-B
    =
    \frac1{\Delta t}\int_0^{\Delta t}
    (e^{A(\Delta t-\tau)}-I_d)B\,d\tau
    +
    \frac1{\Delta t}\int_0^{\Delta t}
    e^{A(\Delta t-\tau)}\bar A M_{\tilde A,\tau}B\,d\tau.
\end{align}
For the first term on the right-hand side of \eqref{eq:error_B_bhat} we have,
\begin{align}\label{eq:estimate_b_bhat1}
     \left\|
    \frac1{\Delta t}\int_0^{\Delta t}
    (e^{A(\Delta t-\tau)}-I_d)B\,d\tau
    \right\|
    \le
    \frac{\|B\|}{\Delta t}\int_0^{\Delta t}
    (\Delta t-\tau)c_Ae^{c_A(\Delta t-\tau)}\,d\tau
    \le
    \frac12 c_A e^{c_A\Delta t}\|B\|\Delta t.
\end{align}
For the second term on the right-hand side of \eqref{eq:error_B_bhat}, since
\[
    \|M_{\tilde A,\tau}\|
    =
    \left\|
        \int_0^\tau e^{\tilde A(\tau-r)}\,dr
    \right\|
    \le
    \int_0^\tau e^{c_{\tilde A}(\tau-r)}\,dr
    \le
    \tau e^{c_{\tilde A}\tau},
\]
we have
\begin{align}\label{eq:estimate_b_bhat2}
    \frac1{\Delta t}\int_0^{\Delta t}
    \|e^{A(\Delta t-\tau)}\bar A M_{\tilde A,\tau}B\|\,d\tau
    \le
    \frac{c_{\bar A}\|B\|}{\Delta t}
    \int_0^{\Delta t}
    e^{c_A(\Delta t-\tau)}\tau e^{c_{\tilde A}\tau}\,d\tau
    \le
    \frac12 c_{\bar A}e^{(c_A+c_{\tilde A})\Delta t}\|B\|\Delta t.
\end{align}
Combining both estimates \eqref{eq:estimate_b_bhat1} and \eqref{eq:estimate_b_bhat2}, and using $e^{c_A\Delta t}\le e^{(c_A+c_{\tilde A})\Delta t}$, gives
\[
    \|\hat B-B\|
    \le
    \frac12
    \left(
        c_A e^{c_A\Delta t}
        +
        c_{\bar A}e^{(c_A+c_{\tilde A})\Delta t}
    \right)
    \|B\|\Delta t\leq \frac12
    (c_A+c_{\bar A})e^{(c_A+c_{\tilde A})\Delta t}
    \|B\|\Delta t.
\]
\end{proof}

\begin{lemma}[Assumptions on the estimated coefficients]
\label{lem:LQR-MKV-assumption-stability-hat}
Assume that
\Cref{ass:LQR-MKV} holds for the original coefficients
$(A,\bar A,B)$. Let $\hat A$, $\hat{\bar A}$, and $\hat B$ be defined by
\eqref{eq:definition_coef_HAT}. Then there exists $\Delta t_0>0$ such
that, for every $\Delta t\in(0,\Delta t_0]$, \Cref{ass:LQR-MKV} also
holds with $(A,\bar A,B)$ replaced by
$(\hat A,\hat{\bar A},\hat B)$.
\end{lemma}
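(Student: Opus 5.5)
The plan is to check (H1) and (H2) separately for $(\hat A,\hat{\bar A},\hat B)$. The main tool is \Cref{lem:first_order_coeff_error}, which says $\hat A\to A$, $\hat{\bar A}\to\bar A$ and $\hat B\to B$ at rate $O(\Delta t)$ as $\Delta t\to0$. Condition (H1) involves only $N$, $Q$ and $\bar Q$. These are not changed by the hat construction, so (H1) carries over unchanged for every $\Delta t>0$. All the work is in (H2), which we treat by perturbation.

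\textbf{Centered pair.} Let $\hat A_\beta:=\hat A-\frac\beta2 I_d$. By (H2) there is $\Theta\in\mathbb R^{m\times d}$ with $A_\beta+B\Theta$ Hurwitz. We keep this same gain $\Theta$ and write
\[
\hat A_\beta+\hat B\Theta-(A_\beta+B\Theta)=(\hat A-A)+(\hat B-B)\Theta .
\]
By \Cref{lem:first_order_coeff_error}, the norm of this difference is at most $C(1+\|\Theta\|)\Delta t$, with $C$ bounded for $\Delta t\le1$.

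The Hurwitz matrices form an open set, because the eigenvalues depend continuously on the entries. To get an explicit threshold, we can use a Lyapunov argument. Let $P\succ0$ solve
\[
P(A_\beta+B\Theta)+(A_\beta+B\Theta)^\top P=-I_d .
\]
For the perturbed matrix $M=A_\beta+B\Theta+E$, the same $P$ gives
\[
PM+M^\top P=-I_d+PE+E^\top P\preceq -\tfrac12 I_d
\quad\text{whenever}\quad \|E\|\le(4\|P\|)^{-1}.
\]
Hence $\hat A_\beta+\hat B\Theta$ is Hurwitz for all $\Delta t\le\Delta t_1$, where $\Delta t_1$ is chosen so that $C(1+\|\Theta\|)\Delta t_1\le(4\|P\|)^{-1}$.

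\textbf{Mean pair.} We repeat the argument for $\widehat{\widetilde A}_\beta:=\hat A+\hat{\bar A}-\frac\beta2 I_d$. Take a gain $\widetilde\Theta$ that stabilizes $(\widetilde A_\beta,B)$. The perturbation is
\[
(\hat A-A)+(\hat{\bar A}-\bar A)+(\hat B-B)\widetilde\Theta,
\]
which is again $O(\Delta t)$ by \Cref{lem:first_order_coeff_error}. The Lyapunov argument then gives a threshold $\Delta t_2$. We set $\Delta t_0:=\min\{1,\Delta t_1,\Delta t_2\}$.

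\textbf{Main obstacle.} No step is genuinely hard. The only point needing care is that the exponential factors $e^{(c_A+c_{\tilde A})\Delta t}$ in \Cref{lem:first_order_coeff_error} must stay bounded. Restricting to $\Delta t\le1$ ensures this, so the constant $C$ above is uniform.
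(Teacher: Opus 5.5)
Your proof is correct and follows the paper's argument. In both, (H1) is untouched, and for (H2) the stabilizing gains $\Theta_1,\Theta_2$ are kept fixed, the perturbations $(\hat A-A)+(\hat B-B)\Theta_1$ and $(\hat A-A)+(\hat{\bar A}-\bar A)+(\hat B-B)\Theta_2$ are shown to be small via \Cref{lem:first_order_coeff_error}, and the conclusion follows from openness of the Hurwitz set. Your Lyapunov computation with $P$ only makes that openness quantitative, giving an explicit threshold $\Delta t_0$ where the paper simply cites openness.
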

\begin{proof}
Condition {\rm (H1)} is unchanged because it depends only on
$Q$, $\bar Q$, and $N$.

By {\rm (H2)}, there exist
$\Theta_1,\Theta_2\in\mathbb R^{m\times d}$ such that
\[
L_1:=A_\beta+B\Theta_1,
\qquad
L_2:=\tilde A_\beta+B\Theta_2
\]
are Hurwitz. Define
\[
\hat L_1:=
\left(\hat A-\frac{\beta}{2}I_d\right)+\hat B\Theta_1,
\qquad
\hat L_2:=
\left(\hat A+\hat{\bar A}-\frac{\beta}{2}I_d\right)
+\hat B\Theta_2.
\]
Then
\[
\hat L_1-L_1
=
(\hat A-A)+(\hat B-B)\Theta_1,
\]
and
\[
\hat L_2-L_2
=
(\hat A-A)+(\hat{\bar A}-\bar A)
+(\hat B-B)\Theta_2.
\]
By \Cref{lem:first_order_coeff_error},
\[
\|\hat L_1-L_1\|+\|\hat L_2-L_2\|
\longrightarrow0
\qquad\text{as }\Delta t\to0.
\]
Since the set of Hurwitz matrices is open, there exists
$\Delta t_0>0$ such that $\hat L_1$ and $\hat L_2$ are Hurwitz for every
$\Delta t\in(0,\Delta t_0]$. Hence the pairs
$(\hat A_\beta,\hat B)$ and
$(\hat A+\hat{\bar A}-\frac{\beta}{2}I_d,\hat B)$ are stabilizable.
Thus Assumption~\ref{ass:LQR-MKV} holds for the estimated coefficients.
\end{proof}

 Let $\widehat{\tilde A}:=\hat A+\hat{\bar A}$ and consider $\hat K,\hat\Lambda\in\mathbb S_+^d$ and $\hat R\in\mathbb R$ the solutions of the algebraic system
\begin{empheq}[left=\empheqlbrace]{align}  \beta\hat K  &=  Q+\hat K\hat A+\hat A^\top\hat K  -  \hat K\hat B N^{-1}\hat B^\top\hat K,  \label{eq:LQR-MF-PhiBE-ARE-K}\\  \beta\hat\Lambda  &=  Q+\bar Q  +  \hat\Lambda\widehat{\tilde A}  +  \widehat{\tilde A}^{\top}\hat\Lambda  -  \hat\Lambda\hat B N^{-1}\hat B^\top\hat\Lambda,  \label{eq:LQR-MF-PhiBE-ARE-Lambda}\\  \beta\hat R  &=  -  \frac{\lambda}{2}  \left(    m\log(\pi_0\lambda)-\log\det N  \right).  \label{eq:LQR-MF-PhiBE-ARE-R}
\end{empheq}

The following proposition characterizes the  optimal policy of \eqref{eq:LQR-MF-PhiBE-optimal-value} via the algebraic system \eqref{eq:LQR-MF-PhiBE-ARE-K}-\eqref{eq:LQR-MF-PhiBE-ARE-R}.
\begin{proposition}[Auxiliary control problem]
\label{prop:LQR-MF-PhiBE-auxiliary-control}
Let $\Delta t_0>0$ be given by
\Cref{lem:LQR-MKV-assumption-stability-hat}, and let
$\Delta t\in(0,\Delta t_0]$. Consider the auxiliary control problem
\begin{equation}
\label{eq:LQR-MF-PhiBE-auxiliary-value}
  \widehat{\mathcal V}^{\mathrm{aux}}(\mu)
  :=
  \sup_{\pi\in\Pi_{\mathrm{add}}}
  \mathbb E\left[
    \int_0^\infty e^{-\beta t}
    r_\lambda^\pi(\hat\mu_t)\,dt
    \,\bigg|\,
    \hat\mu_0=\mu
  \right],
  \qquad
  \mu\in\mathcal P_2(\mathbb R^d),
\end{equation}
where $r_\lambda^\pi$ is defined in \eqref{eq:reg_reward_avg_mkv}, with $r$ as in \eqref{eq:reward_LQR_simplify}, and $(\hat\mu_t)_{t\ge0}$ is generated by
\begin{equation}
\label{eq:LQR-MF-PhiBE-auxiliary-dynamics}
  d\hat s_t
  =
  \hat b^\pi(\hat s_t,\hat\mu_t)\,dt,
  \qquad
  \hat s_0\sim\mu,
  \qquad
  \hat\mu_t=\Law(\hat s_t),
\end{equation}
with
\[
  \hat b^\pi(s,\mu)
  :=
  \int_{\mathbb R^m}
  \hat b(s,\mu,a)\,\pi(da\mid s,\mu),
\]
and $\hat{b}$ defined in \eqref{eq:LQR-phibe-drift-estimator}. Then the auxiliary control problem admits an optimal Gaussian feedback
policy
\begin{equation}
\label{eq:LQR-MF-PhiBE-optimal-policy}
  \hat\pi^*(\cdot\mid s,\mu)
  =
  \mathcal N\left(
    -N^{-1}\hat B^\top
    \bigl(
      \hat K(s-\bar\mu)+\hat\Lambda\bar\mu
    \bigr),
    \frac{\lambda}{2}N^{-1}
  \right),
\end{equation}
where $\hat K$ and $\hat\Lambda$ are the stabilizing solutions of
\eqref{eq:LQR-MF-PhiBE-ARE-K} and
\eqref{eq:LQR-MF-PhiBE-ARE-Lambda}, respectively. Moreover, $\widehat{\mathcal V}^{\mathrm{aux}}(\mu)=\hat{\mathcal V}^*(\mu)$ for every $\mu\in\mathcal P_2(\mathbb R^d)$. Consequently, \eqref{eq:LQR-MF-PhiBE-optimal-policy} is an optimal LQR-MF-PhiBE policy.
\end{proposition}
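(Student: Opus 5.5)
The plan is to solve the auxiliary problem \eqref{eq:LQR-MF-PhiBE-auxiliary-value} as a deterministic entropy-regularized mean-field LQ problem with coefficients $(\hat A,\hat{\bar A},\hat B)$ and zero noise. This is a verification argument modelled on \Cref{prop:LQR-MKV-optimal-policy}.

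\textbf{Step 1: auxiliary Riccati system.} By \Cref{lem:LQR-MKV-assumption-stability-hat}, for $\Delta t\le\Delta t_0$ the estimated coefficients satisfy \Cref{ass:LQR-MKV}. Hence \eqref{eq:LQR-MF-PhiBE-ARE-K}--\eqref{eq:LQR-MF-PhiBE-ARE-Lambda} admit stabilizing solutions $\hat K,\hat\Lambda\succeq0$. We then define $\hat R$ by \eqref{eq:LQR-MF-PhiBE-ARE-R}; the term $\gamma^\top K\gamma$ is absent because the diffusion is zero. The candidate is
\[
\hat{\mathcal V}(\mu)=-\bigl[\operatorname{Tr}(\hat K\,\mathrm{Cov}(\mu))+\bar\mu^\top\hat\Lambda\bar\mu+\hat R\bigr],
\]
with $\partial_\mu\hat{\mathcal V}(\mu)(x)=-2\hat K(x-\bar\mu)-2\hat\Lambda\bar\mu$. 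Note that $\int\partial_\mu\hat{\mathcal V}(\mu)(x)\,\mu(dx)=-2\hat\Lambda\bar\mu$, and $\hat{\mathcal V}\in\mathcal C^2_{\mathrm{poly}}$.

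\textbf{Step 2: pointwise HJB maximization.} For any admissible $\pi$, we write $\hat b^\pi(x,\mu)=\hat Ax+\hat{\bar A}\bar\mu+\hat B\bar\pi(x,\mu)$, where $\bar\pi$ is the mean of $\pi$. The Hamiltonian integrand at $(x,\mu)$ is
\[
\int\bigl(-a^\top Na-\lambda\log p_\pi(a)\bigr)\pi(da)+\hat B\bar\pi\cdot\partial_\mu\hat{\mathcal V}(\mu)(x)+(\text{terms independent of }\pi).
\]
The Gibbs variational principle says this is maximized by the density proportional to $\exp\{-\lambda^{-1}(a^\top Na-a^\top\hat B^\top\partial_\mu\hat{\mathcal V})\}$. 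That density is exactly the Gaussian \eqref{eq:LQR-MF-PhiBE-optimal-policy}. The maximal value contributes the entropy constant $\frac{\lambda}{2}(m\log(\pi_0\lambda)-\log\det N)$, which is matched by $\hat R$. Next we substitute $x=(x-\bar\mu)+\bar\mu$, integrate over $\mu$, and separate the covariance and mean parts. The equations \eqref{eq:LQR-MF-PhiBE-ARE-K} and \eqref{eq:LQR-MF-PhiBE-ARE-Lambda} then give
\[
\sup_\pi\bigl[(\mathcal L^\pi_{\hat b,0}-\beta)\hat{\mathcal V}+r^\pi_\lambda\bigr](\mu)=0,
\]
with equality exactly at $\hat\pi^*$. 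This is the routine quadratic bookkeeping.

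\textbf{Step 3: verification and identification.} Along the deterministic flow \eqref{eq:LQR-MF-PhiBE-auxiliary-dynamics}, the chain rule for $e^{-\beta t}\hat{\mathcal V}(\hat\mu_t)$ gives $\hat{\mathcal V}(\mu)\ge\int_0^Te^{-\beta t}r^\pi_\lambda(\hat\mu_t)\,dt+e^{-\beta T}\hat{\mathcal V}(\hat\mu_T)$ for every admissible $\pi$, with equality for $\hat\pi^*$. Letting $T\to\infty$ and using the auxiliary transversality condition $e^{-\beta T}m_2(\hat\mu_T)\to0$ yields $\hat{\mathcal V}\ge$ the value of every $\pi$, with equality at $\hat\pi^*$. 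Under $\hat\pi^*$ the mean and centered dynamics are governed by $\widehat{\tilde A}-\hat B N^{-1}\hat B^\top\hat\Lambda$ and $\hat A-\hat BN^{-1}\hat B^\top\hat K$. These are $\beta/2$-shifted Hurwitz by the stabilizing property, so $\hat\pi^*$ is admissible and satisfies transversality.

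For the identification $\widehat{\mathcal V}^{\mathrm{aux}}=\hat{\mathcal V}^*$, note that for each $\pi$ in the class of \Cref{def:LQR-MF-PhiBE}, the solution $\hat{\mathcal V}^\pi$ of \eqref{eq:LQR-MF-PhiBE-evaluation} is, by definition, represented by the auxiliary flow. Repeating the uniqueness argument of \Cref{thm:MF_eval_infinite_mkv} along \eqref{eq:LQR-MF-PhiBE-auxiliary-dynamics} with transversality shows that $\hat{\mathcal V}^\pi$ equals the auxiliary payoff of $\pi$. Taking suprema gives the claim, and hence the optimality of $\hat\pi^*$.

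\textbf{Main obstacle.} The main difficulty is in Step 3, in handling general admissible randomized policies with possibly unbounded means. The verification inequality needs the chain rule along arbitrary flows, and the boundary term must vanish for every competitor, not just the optimizer. I would handle this by restricting to the admissible class, where finiteness and transversality are built in, and using only the quadratic growth of $\hat{\mathcal V}$. Where needed, I would justify the entropy integrals via the Gibbs inequality applied to $\pi(\cdot\mid x,\mu)$ pointwise.
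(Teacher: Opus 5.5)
Your argument is correct, but it proves the optimality part by a different route from the paper. The paper never redoes the HJB computation. After \Cref{lem:LQR-MKV-assumption-stability-hat}, it observes that \eqref{eq:LQR-MF-PhiBE-auxiliary-dynamics} is itself an entropy-regularized MKV-LQR problem with matrices $(\hat A,\hat{\bar A},\hat B)$ and $\gamma=0$. It then invokes \Cref{prop:LQR-MKV-optimal-policy}, the specialization of the cited Frikha et al.\ result, to read off $\hat\pi^*$ and $\widehat{\mathcal V}^{\mathrm{aux}}$; the constant \eqref{eq:LQR-MF-PhiBE-ARE-R} is just \eqref{eq:ARE_SIMPLIFY_RICCATI_R} with $\gamma=0$. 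You instead re-prove that proposition for the auxiliary system, using the Gibbs variational principle, the quadratic bookkeeping against \eqref{eq:LQR-MF-PhiBE-ARE-K}--\eqref{eq:LQR-MF-PhiBE-ARE-R}, and a verification inequality along the deterministic flow. Your Gibbs computation (mean, covariance $\frac{\lambda}{2}N^{-1}$, entropy constant) and the Riccati cancellation both check out. Your identification of $\widehat{\mathcal V}^{\mathrm{aux}}$ with $\hat{\mathcal V}^*$ is the same as the paper's: uniqueness for the drift-only linear equation via the chain rule along the auxiliary flow plus auxiliary transversality.

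The paper's route is shorter. Yours is self-contained and makes the admissible class explicit. In \Cref{sec:LQR}, $\Pi_{\mathrm{add}}$ is defined through finiteness and transversality of the \emph{original} law flow, so applying \Cref{prop:LQR-MKV-optimal-policy} to the auxiliary problem tacitly reinterprets these conditions for the auxiliary flow; the paper's proof then refers to this as ``the common policy class''. Your verification shows that only three things are needed: $e^{-\beta T}m_2(\hat\mu_T)\to0$, finiteness of the payoff, and quadratic growth of the candidate. You also check directly that $\hat\pi^*$ satisfies the auxiliary transversality, since $\hat A-\hat BN^{-1}\hat B^\top\hat K-\frac{\beta}{2}I_d$ and $\widehat{\tilde A}-\hat BN^{-1}\hat B^\top\hat\Lambda-\frac{\beta}{2}I_d$ are Hurwitz; the paper leaves this to the cited result. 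The technicalities you flag remain: existence of the flow and of the chain rule for arbitrary measurable policies, and well-definedness of the entropy term. Both treatments handle them only by restricting to the admissible class.
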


\begin{proof}
By \Cref{lem:LQR-MKV-assumption-stability-hat}, for every $\Delta t\in(0,\Delta t_0]$, \Cref{ass:LQR-MKV} holds with $(A,\bar A,B)$ replaced by $(\hat A,\hat{\bar A},\hat B)$. Moreover, by \eqref{eq:LQR-phibe-drift-estimator}, the auxiliary dynamics \eqref{eq:LQR-MF-PhiBE-auxiliary-dynamics} are an entropy-regularized McKean-Vlasov LQR system with drift matrices $(\hat A,\hat{\bar A},\hat B)$ and zero diffusion. Hence \Cref{prop:LQR-MKV-optimal-policy}, applied to this auxiliary LQR problem, yields the Gaussian feedback \eqref{eq:LQR-MF-PhiBE-optimal-policy} and the optimal value $\widehat{\mathcal V}^{\mathrm{aux}}$.

It remains to identify the auxiliary performance with the LQR-MF-PhiBE
evaluation. Let $\pi$ belong to the policy class specified in
Definition~\ref{def:LQR-MF-PhiBE}, and write
\[
\widehat{\mathcal J}^{\pi}(\mu)
:=
\mathbb E\left[
\int_0^\infty e^{-\beta t}r_\lambda^\pi(\hat\mu_t)\,dt
\,\bigg|\,
\hat\mu_0=\mu
\right].
\]
The auxiliary law flow has generator
$\mathcal L_{\hat b,0}^{\pi}$, so
$\widehat{\mathcal J}^{\pi}$ solves
\[
(\mathcal L_{\hat b,0}^{\pi}-\beta)
\widehat{\mathcal J}^{\pi}+r_\lambda^\pi=0.
\]
If $U\in\mathcal C^2_{\mathrm{poly}}$ is another classical solution,
the chain rule gives, for every $T>0$,
\[
U(\mu)
=
e^{-\beta T}U(\hat\mu_T)
+
\int_0^T e^{-\beta t}r_\lambda^\pi(\hat\mu_t)\,dt.
\]
Polynomial growth and the auxiliary transversality condition imply that
the boundary term converges to zero. Hence
$U=\widehat{\mathcal J}^{\pi}$, and therefore
\[
\hat{\mathcal V}^{\pi}=\widehat{\mathcal J}^{\pi}.
\]
Taking the supremum over the common policy class in
Definition~\ref{def:LQR-MF-PhiBE} gives
\[
\hat{\mathcal V}^*=\widehat{\mathcal V}^{\mathrm{aux}}.
\]
Since \eqref{eq:LQR-MF-PhiBE-optimal-policy} attains the auxiliary
supremum, it is an optimal LQR-MF-PhiBE policy.
\end{proof}

The following lemma gives explicit bounds on the perturbation constants associated with the discounted CARE. These bounds will be used below to display the dependence on $\beta$.

\begin{lemma}[Explicit bounds for the ARE perturbation constants]
    \label{lem:sun_constants_explicit_beta}
    Let $Q_0\in\mathbb S_{++}^d$, $G\in\mathbb S_+^d$, and $A_0\in\mathbb R^{d\times d}$. Let $\beta\ge0$, set $A_\beta:=A_0-\frac{\beta}{2}I_d$, and let $X\in\mathbb S_{++}^d$ be the stabilizing solution of
    \begin{equation}
    \label{eq:generic_discounted_CARE}
        Q_0+A_\beta^\top X+XA_\beta-XGX=0.
    \end{equation}
    Define $\Phi_X:=A_\beta-GX,$ and $\mathcal L_X W:=\Phi_X^\top W+W\Phi_X.$ Let $l_X,p_X,q_X$ be the corresponding perturbation constants in the
    sense of \cite[Theorem~3.1]{Sun1998}, namely
    \[
        l_X:=\|\mathcal L_X^{-1}\|^{-1},\qquad
        p_X:=
        \left\|
        Z\mapsto \mathcal L_X^{-1}(XZ+Z^\top X)
        \right\|,
        \qquad
        q_X:=
        \left\|
        H\mapsto \mathcal L_X^{-1}(XHX)
        \right\|.
    \]
    Then
    \begin{equation}
    \label{eq:sun_pq_bound_explicit_beta}
     l_X
        \ge
        \frac{\lambda_{\min}(X)\lambda_{\min}(Q_0)}{\|X\|^2},\qquad    p_X
        \le
        \frac{2\|X\|^3}
        {\lambda_{\min}(X)\lambda_{\min}(Q_0)},
        \qquad
        q_X
        \le
        \frac{\|X\|^4}
        {\lambda_{\min}(X)\lambda_{\min}(Q_0)}.
    \end{equation}
    Moreover,
    \begin{equation}
    \label{eq:X_min_lower_beta}
        \frac{1}{\lambda_{\min}(X)}
        \le
        \frac{
          \beta+2\|A_0\|+\|G\|\|X\|
        }{
          \lambda_{\min}(Q_0)
        }.
    \end{equation}
    Consequently,
    \begin{equation}
    \label{eq:sun_l_bound_beta_visible}
        l_X
        \ge
        \frac{\lambda_{\min}(Q_0)^2}
        {\|X\|^2\bigl(\beta+2\|A_0\|+\|G\|\|X\|\bigr)},
    \end{equation}
    and
    \begin{equation}
    \label{eq:sun_p_bound_beta_visible}
        p_X
        \le
        \frac{
          2\|X\|^3
          \bigl(\beta+2\|A_0\|+\|G\|\|X\|\bigr)
        }{
          \lambda_{\min}(Q_0)^2
        },
    \end{equation}
    \begin{equation}
    \label{eq:sun_q_bound_beta_visible}
        q_X
        \le
        \frac{
          \|X\|^4
          \bigl(\beta+2\|A_0\|+\|G\|\|X\|\bigr)
        }{
          \lambda_{\min}(Q_0)^2
        }.
    \end{equation}
    \end{lemma}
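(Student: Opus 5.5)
The plan is to work with the Lyapunov operator $\mathcal L_X W=\Phi_X^\top W+W\Phi_X$, where $\Phi_X=A_\beta-GX$ is Hurwitz because $X$ is the stabilizing solution. The first step is to rewrite \eqref{eq:generic_discounted_CARE} in closed-loop form. Substituting $A_\beta=\Phi_X+GX$ gives
\[
\Phi_X^\top X+X\Phi_X=-Q_0-XGX .
\]
Hence $\mathcal L_X X=-(Q_0+XGX)\preceq -Q_0$.

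The key tool is the standard integral representation, which holds since $\Phi_X$ is Hurwitz:
\[
\mathcal L_X^{-1}W=-\int_0^\infty e^{\Phi_X^\top t}We^{\Phi_X t}\,dt .
\]
From it I would argue as follows.

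\begin{itemize}
\item For symmetric $W$ with $-\|W\|I\preceq W\preceq\|W\|I$, monotonicity of the map $W\mapsto\int_0^\infty e^{\Phi_X^\top t}We^{\Phi_X t}dt$ gives $\|\mathcal L_X^{-1}W\|\le\|W\|\,\|\mathcal L_X^{-1}I\|$.
\item Since $Q_0\succeq\lambda_{\min}(Q_0)I$ and $-\mathcal L_X^{-1}(Q_0+XGX)=X$, positivity yields $-\mathcal L_X^{-1}I\preceq X/\lambda_{\min}(Q_0)$. Therefore $\|\mathcal L_X^{-1}I\|\le\|X\|/\lambda_{\min}(Q_0)$.
\end{itemize}

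For nonsymmetric arguments I would split into symmetric and skew parts, or restrict to the symmetric domain, which is the setting of \cite{Sun1998}. This gives
\[
\|\mathcal L_X^{-1}\|\le \frac{\|X\|}{\lambda_{\min}(Q_0)}\le\frac{\|X\|^2}{\lambda_{\min}(X)\lambda_{\min}(Q_0)} .
\]
The last inequality uses $\|X\|\ge\lambda_{\min}(X)$, and the stated lower bound on $l_X$ follows.

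The bounds on $p_X$ and $q_X$ then come from $\|XZ+Z^\top X\|\le 2\|X\|\|Z\|$ and $\|XHX\|\le\|X\|^2\|H\|$, multiplied by the bound on $\|\mathcal L_X^{-1}\|$. Because the target constants are weaker than what this yields (extra factors $\|X\|/\lambda_{\min}(X)\ge1$), there is slack, and the inequalities hold as stated.

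For \eqref{eq:X_min_lower_beta}, I would take a unit eigenvector $v$ of $X$ with $Xv=\lambda_{\min}(X)v$ and test the CARE against it:
\[
0=v^\top Q_0v+2\lambda_{\min}(X)\,v^\top A_\beta v-\lambda_{\min}(X)\,v^\top GXv .
\]
This gives
\[
\lambda_{\min}(Q_0)\le v^\top Q_0 v=\lambda_{\min}(X)\bigl(v^\top GXv-2v^\top A_\beta v\bigr)\le \lambda_{\min}(X)\bigl(\|G\|\|X\|+2\|A_0\|+\beta\bigr),
\]
using $-2v^\top A_\beta v=-2v^\top A_0 v+\beta$. Rearranging gives \eqref{eq:X_min_lower_beta}. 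Positivity of $X$ is also needed so that $\lambda_{\min}(X)>0$; this follows from $X=\int_0^\infty e^{\Phi_X^\top t}(Q_0+XGX)e^{\Phi_X t}dt\succ0$.

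Finally, substituting \eqref{eq:X_min_lower_beta} into the three bounds of \eqref{eq:sun_pq_bound_explicit_beta} yields \eqref{eq:sun_l_bound_beta_visible}–\eqref{eq:sun_q_bound_beta_visible} directly.

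The main obstacle is the operator-norm bound for $\mathcal L_X^{-1}$, in particular the passage from the Loewner-order estimate to a norm bound over all admissible arguments. If the domain in \cite{Sun1998} includes nonsymmetric matrices, I would first try to handle the skew part using the same integral with $\|e^{\Phi_X t}\|$ controlled through the Lyapunov function $X$. Concretely, $\frac{d}{dt}\bigl(e^{\Phi_X^\top t}Xe^{\Phi_X t}\bigr)\preceq-\lambda_{\min}(Q_0)\,e^{\Phi_X^\top t}e^{\Phi_X t}$, which gives $\int_0^\infty\|e^{\Phi_X t}\|^2dt\le\|X\|/\lambda_{\min}(Q_0)$. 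Since $\|\mathcal L_X^{-1}W\|\le\|W\|\int_0^\infty\|e^{\Phi_X t}\|^2dt$, this also covers the general case with the same constant.
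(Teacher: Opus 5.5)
Your argument is correct on symmetric arguments, which is all that is needed here: the paper's proof also solves $\mathcal L_X W=Y$ only for symmetric $Y$, and $XZ+Z^\top X$ and $XHX$ are symmetric. You reach the key bound on $\|\mathcal L_X^{-1}\|$ by a different route from the paper.

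The paper first proves a pointwise decay estimate $\|e^{\Phi_X t}\|^2\le\frac{\|X\|}{\lambda_{\min}(X)}e^{-\lambda_{\min}(Q_0)t/\|X\|}$, using the scalar Lyapunov function $z^\top Xz$, and then integrates $\|e^{\Phi_Xt}\|^2$. Converting between $z^\top Xz$ and $|z|^2$ costs the condition number $\|X\|/\lambda_{\min}(X)$, which is why that factor appears in \eqref{eq:sun_pq_bound_explicit_beta}.

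You instead combine two facts. The map $W\mapsto-\mathcal L_X^{-1}W=\int_0^\infty e^{\Phi_X^\top t}We^{\Phi_Xt}\,dt$ is order preserving, and $-\mathcal L_X^{-1}(Q_0+XGX)=X$ exactly. Together these give the sharper bound $\|\mathcal L_X^{-1}\|\le\|X\|/\lambda_{\min}(Q_0)$, hence $p_X\le 2\|X\|^2/\lambda_{\min}(Q_0)$ and $q_X\le\|X\|^3/\lambda_{\min}(Q_0)$. Your eigenvector argument for \eqref{eq:X_min_lower_beta} is the same as the paper's, and it is now needed only to put the result in the stated form. The trade-off is that your monotonicity step is specific to the spectral norm. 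The paper's estimate $\|e^{\Phi_X^\top t}Ye^{\Phi_Xt}\|\le\|e^{\Phi_Xt}\|^2\|Y\|$ works for any unitarily invariant norm and any $Y$.

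Your fallback for nonsymmetric arguments contains a wrong step. The Loewner bound $\int_0^\infty e^{\Phi_X^\top t}e^{\Phi_Xt}\,dt\preceq X/\lambda_{\min}(Q_0)$ does not give $\int_0^\infty\|e^{\Phi_Xt}\|^2dt\le\|X\|/\lambda_{\min}(Q_0)$. The norm of an integral of positive semidefinite matrices is at most the integral of their norms, not at least. The claimed inequality is in fact false. Take $G=0$, $Q_0=I$, $\beta=0$ and $A_0=\begin{pmatrix}-1&2\\0&-1\end{pmatrix}$. Then $X=\begin{pmatrix}1/2&1/2\\1/2&3/2\end{pmatrix}$ and $\|X\|=1+1/\sqrt2$. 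But $\|e^{A_0t}\|^2=e^{-2t}\bigl(1+2t^2+2t\sqrt{1+t^2}\bigr)$, and using $\sqrt{1+t^2}\ge(1+t)/\sqrt2$ (strict for $t\neq1$) its integral exceeds $1+1/\sqrt2$.

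The step is unnecessary for the lemma. If you do want general arguments, apply Cauchy--Schwarz vectorwise: $|u^\top(\mathcal L_X^{-1}W)v|\le\|W\|\bigl(\int_0^\infty|e^{\Phi_Xt}u|^2dt\bigr)^{1/2}\bigl(\int_0^\infty|e^{\Phi_Xt}v|^2dt\bigr)^{1/2}$. By your Loewner bound, each factor is at most $(\|X\|/\lambda_{\min}(Q_0))^{1/2}$. Alternatively, use the paper's pointwise estimate, which has the weaker constant.
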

    
    \begin{proof}
    Since $X$ solves \eqref{eq:generic_discounted_CARE}, we have
    \[
        A_\beta^\top X+XA_\beta-XGX=-Q_0.
    \]
    Therefore, with $\Phi_X=A_\beta-GX$,
    \begin{equation}
    \label{eq:closed_loop_lyapunov_identity_X}
        \Phi_X^\top X+X\Phi_X
        =
        -Q_0-XGX
        \preceq
        -Q_0.
    \end{equation}
    Let $z(t):=e^{\Phi_Xt}z_0$. From
    \eqref{eq:closed_loop_lyapunov_identity_X},
    \[
        \frac{d}{dt}\,z(t)^\top Xz(t)
        =
        z(t)^\top
        \bigl(\Phi_X^\top X+X\Phi_X\bigr)
        z(t)
        \le
        -\lambda_{\min}(Q_0)\|z(t)\|^2.
    \]
    Since $z^\top Xz\le \|X\|\|z\|^2$, it follows that
    \[
        \frac{d}{dt}\,z(t)^\top Xz(t)
        \le
        -
        \frac{\lambda_{\min}(Q_0)}{\|X\|}
        z(t)^\top Xz(t).
    \]
    Thus
    \[
        z(t)^\top Xz(t)
        \le
        e^{-\lambda_{\min}(Q_0)t/\|X\|}
        z_0^\top Xz_0.
    \]
    Using $z(t)^\top Xz(t)\ge\lambda_{\min}(X)\|z(t)\|^2$ and
    $z_0^\top Xz_0\le \|X\|\|z_0\|^2$, we obtain
    \begin{equation}
    \label{eq:semigroup_bound_X_metric}
        \|e^{\Phi_Xt}\|^2
        \le
        \frac{\|X\|}{\lambda_{\min}(X)}
        e^{-\lambda_{\min}(Q_0)t/\|X\|},
        \qquad t\ge0.
    \end{equation}
    
    For every symmetric matrix $Y$, the solution of
    $\mathcal L_XW=Y$ is
    \[
        W
        =
        -\int_0^\infty
        e^{\Phi_X^\top t}
        Y
        e^{\Phi_Xt}\,dt.
    \]
    Therefore, by \eqref{eq:semigroup_bound_X_metric},
    \[
        \|\mathcal L_X^{-1}Y\|
        \le
        \int_0^\infty
        \|e^{\Phi_Xt}\|^2\,dt\,\|Y\|
        \le
        \frac{\|X\|^2}
        {\lambda_{\min}(X)\lambda_{\min}(Q_0)}
        \|Y\|.
    \]
    Hence
    \[
        \|\mathcal L_X^{-1}\|
        \le
        \frac{\|X\|^2}
        {\lambda_{\min}(X)\lambda_{\min}(Q_0)},
    \]
    which gives the first term in \eqref{eq:sun_pq_bound_explicit_beta}.
    
    Next, for any $Z\in\mathbb R^{d\times d}$,
    \[
        \|\mathcal L_X^{-1}(XZ+Z^\top X)\|
        \le
        \|\mathcal L_X^{-1}\|
        \bigl(\|XZ\|+\|Z^\top X\|\bigr)
        \le
        \frac{2\|X\|^3}
        {\lambda_{\min}(X)\lambda_{\min}(Q_0)}
        \|Z\|.
    \]
    This proves the estimate for $p_X$. Similarly, for any
    $H\in\mathbb S^d$,
    \[
        \|\mathcal L_X^{-1}(XHX)\|
        \le
        \|\mathcal L_X^{-1}\|\,\|X\|^2\,\|H\|
        \le
        \frac{\|X\|^4}
        {\lambda_{\min}(X)\lambda_{\min}(Q_0)}
        \|H\|.
    \]
    This proves the estimate for $q_X$.
    
    It remains to prove \eqref{eq:X_min_lower_beta}. Let $v$ be a unit
    eigenvector of $X$ associated with $\lambda_{\min}(X)$. Evaluating
    \eqref{eq:generic_discounted_CARE} on $v$ gives
    \[
        0
        =
        v^\top Q_0v
        +
        2v^\top XA_\beta v
        -
        v^\top XGXv.
    \]
    Since $Xv=\lambda_{\min}(X)v$ and
    $A_\beta=A_0-\frac{\beta}{2}I_d$, we get
    \[
        0
        =
        v^\top Q_0v
        +
        2\lambda_{\min}(X)v^\top A_0v
        -
        \beta\lambda_{\min}(X)
        -
        \lambda_{\min}(X)^2v^\top Gv.
    \]
    Hence
    \[
        v^\top Q_0v
        =
        \beta\lambda_{\min}(X)
        +
        \lambda_{\min}(X)^2v^\top Gv
        -
        2\lambda_{\min}(X)v^\top A_0v.
    \]
    Using $v^\top Q_0v\ge\lambda_{\min}(Q_0)$,
    $v^\top Gv\le\|G\|$, and $-v^\top A_0v\le\|A_0\|$, we obtain
    \[
        \lambda_{\min}(Q_0)
        \le
        \bigl(\beta+2\|A_0\|\bigr)\lambda_{\min}(X)
        +
        \|G\|\lambda_{\min}(X)^2.
    \]
    Since $\lambda_{\min}(X)\le\|X\|$, this implies
    \[
        \lambda_{\min}(Q_0)
        \le
        \bigl(\beta+2\|A_0\|+\|G\|\|X\|\bigr)
        \lambda_{\min}(X),
    \]
    which proves \eqref{eq:X_min_lower_beta}. Finally,
    \eqref{eq:sun_l_bound_beta_visible} follows by combining the first term in 
    \eqref{eq:sun_pq_bound_explicit_beta} with
    \eqref{eq:X_min_lower_beta}, and
    \eqref{eq:sun_p_bound_beta_visible}-\eqref{eq:sun_q_bound_beta_visible}
    follow by combining \eqref{eq:sun_pq_bound_explicit_beta} with
    \eqref{eq:X_min_lower_beta}.
    \end{proof}

\begin{proof}[\textbf{Proof of \Cref{thm:policy_error_phibe_mkv}}]
We use the spectral norm. We first choose $\Delta t_*\in(0,1]$ sufficiently small so that the perturbation estimates below and the smallness conditions in \cite[Theorem~3.1]{Sun1998} and \Cref{lem:LQR-MKV-assumption-stability-hat} hold for every $\Delta t\in(0,\Delta t_*]$. Then \Cref{prop:LQR-MF-PhiBE-auxiliary-control} characterizes the
optimal policy $\hat\pi^*$ by
\eqref{eq:LQR-MF-PhiBE-optimal-policy}. By
\Cref{lem:first_order_coeff_error}, choose a constant $C\ge1$,
independent of $\Delta t\in(0,\Delta t_*]$, such that
\[
\|\hat A-A\|+\|\hat{\bar A}-\bar A\|
\le
\frac{C}{2}\bigl(\|A\|+\|A+\bar A\|\bigr)\Delta t,
\qquad
\|\hat B-B\|
\le
\frac{C}{2}\|B\|\Delta t.
\]
Define $C_K:=2\max\{1,\|K\|\}$,
$C_\Lambda:=2\max\{1,\|\Lambda\|\}$, and
\begin{align}
\label{eq:constant_C_A_b}
      C_{\tilde A}
  :=
  \frac{C}{2}(\|A\|+\|A+\bar A\|),
  \qquad
  C_B
  :=
  \frac{C}{2}\|B\|.
\end{align}
We also set
\begin{equation}
\label{eq:constant_C_K_2}
  C_{K,2}
  :=
  2C_K^3 C_{\tilde A}
  +
  C_K^4
  \Bigl[
    2C_B\|N^{-1}\|\|B\|
    +
    C_B^2\|N^{-1}\|\Delta t_*
  \Bigr],
\end{equation}
\begin{equation}
\label{eq:constant_C_lambd_2}
  C_{\Lambda,2}
  :=
  4C_\Lambda^3 C_{\tilde A}
  +
  C_\Lambda^4
  \Bigl[
    2C_B\|N^{-1}\|\|B\|
    +
    C_B^2\|N^{-1}\|\Delta t_*
  \Bigr],
\end{equation}
and
\begin{equation}
\label{eq:constant_C_K_1_C_Lambd_2}
  C_{K,1}
  :=
  \bigl(2\|A\|+\|BN^{-1}B^\top\|C_K\bigr)C_{K,2},
  \qquad
  C_{\Lambda,1}
  :=
  \bigl(2\|A+\bar A\|+\|BN^{-1}B^\top\|C_\Lambda\bigr)
  C_{\Lambda,2},
\end{equation}
together with $C_{K,3}:=C_BC_K$ and $C_{\Lambda,3}:=C_BC_\Lambda$. The constants above are fixed throughout the proof.

\medskip
\noindent
\textbf{Step 1: coefficient estimates.}
By \Cref{lem:first_order_coeff_error}, and by the definition of $C_{\tilde A}$, we have
\begin{equation}
\label{eq:coeff_errors_beta_explicit_proof}
  \|\hat A-A\|\le C_{\tilde A}\Delta t,
  \qquad
  \|\hat{\bar A}-\bar A\|\le C_{\tilde A}\Delta t,
  \qquad
  \|\hat B-B\|\le C_B\Delta t.
\end{equation}
For the proof only, set
\[
    G:=BN^{-1}B^\top,
    \qquad
    \hat G:=\hat B N^{-1}\hat B^\top.
\]
Then
\[
  \hat G-G
  =
  (\hat B-B)N^{-1}B^\top
  +
  BN^{-1}(\hat B-B)^\top
  +
  (\hat B-B)N^{-1}(\hat B-B)^\top.
\]
Using \eqref{eq:coeff_errors_beta_explicit_proof}, we obtain
\[
  \|\hat G-G\|
  \le
  2C_B\|N^{-1}\|\|B\|\Delta t
  +
  C_B^2\|N^{-1}\|\Delta t^2.
\]
Since $\Delta t\le \Delta t_*$, this yields
\begin{equation}
\label{eq:G_error_beta_explicit_proof}
  \|\hat G-G\|
  \le
  \Bigl[
    2C_B\|N^{-1}\|\|B\|
    +
    C_B^2\|N^{-1}\|\Delta t_*
  \Bigr]\Delta t.
\end{equation}

\medskip
\noindent
\textbf{Step 2: perturbation of the centered Riccati equation.}
Since $D=F=I=0$, \eqref{eq:ARE_SIMPLIFY_RICCATI_K} reduces to
\[
  \beta K
  =
  Q+KA+A^\top K-KBN^{-1}B^\top K.
\]
Equivalently,
\begin{equation}
\label{eq:K_exact_CARE_beta_explicit_proof}
  Q
  +
  \left(A-\frac{\beta}{2}I_d\right)^\top K
  +
  K\left(A-\frac{\beta}{2}I_d\right)
  -
  KGK
  =
  0.
\end{equation}
Similarly, \eqref{eq:LQR-MF-PhiBE-ARE-K} is equivalent to
\begin{equation}
\label{eq:K_hat_CARE_beta_explicit_proof}
  Q
  +
  \left(\hat A-\frac{\beta}{2}I_d\right)^\top\hat K
  +
  \hat K\left(\hat A-\frac{\beta}{2}I_d\right)
  -
  \hat K\hat G\hat K
  =
  0.
\end{equation}
We now apply \cite[Theorem~3.1]{Sun1998}. This theorem compares two algebraic Riccati equations of the form
\[
Q_0+A_0^\top X+XA_0-XG_0X=0
\]
and
\[
(Q_0+\Delta Q)+(A_0+\Delta A)^\top \hat X+\hat X(A_0+\Delta A)-\hat X(G_0+\Delta G)\hat X=0.
\]
For \eqref{eq:K_exact_CARE_beta_explicit_proof}, we take $Q_0=Q$, $A_0=A-\frac{\beta}{2}I_d$, and $G_0=G$. Then \eqref{eq:K_hat_CARE_beta_explicit_proof} corresponds to the coefficient differences
\[
\Delta Q_K:=0,
\qquad
\Delta A_K:=\hat A-A,
\qquad
\Delta G_K:=\hat G-G.
\]
Let $l_K,p_K,q_K$ be the corresponding perturbation constants in \cite[Theorem~3.1]{Sun1998}. Set
\[
\varepsilon_K:=p_K\|\Delta A_K\|+q_K\|\Delta G_K\|,
\qquad
\delta_K:=\|\Delta A_K\|+\|\Delta G_K\|\,\|K\|,
\qquad
\hat g_K:=\|G+\Delta G_K\|.
\]
By \eqref{eq:coeff_errors_beta_explicit_proof} and \eqref{eq:G_error_beta_explicit_proof}, we have $\varepsilon_K\to0$ and $\delta_K\to0$ as $\Delta t\to0$. Hence, after reducing $\Delta t_*$ if necessary, the smallness condition in \cite[Theorem~3.1]{Sun1998} holds. Moreover, the denominator in the bound of \cite[Theorem~3.1]{Sun1998} is
\[
l_K-2\delta_K+
\sqrt{(l_K-2\delta_K)^2-4l_K\hat g_K\varepsilon_K},
\]
which converges to $2l_K$ as $\Delta t\to0$. Reducing $\Delta t_*$ again if necessary, this denominator is bounded from below by $l_K$ for every $\Delta t\in(0,\Delta t_*]$. Therefore \cite[Theorem~3.1]{Sun1998} gives
\begin{equation}
\label{eq:K_error_sun_prelemma}
  \|\hat K-K\|
  \le
  2\left(p_K\|\Delta A_K\|+q_K\|\Delta G_K\|\right).
\end{equation}
By \Cref{lem:sun_constants_explicit_beta},
\[
  p_K
  \le
  \frac{
    2\|K\|^3
    \bigl(\beta+2\|A\|+\|G\|\|K\|\bigr)
  }{
    \lambda_{\min}(Q)^2
  },
\]
and
\[
  q_K
  \le
  \frac{
    \|K\|^4
    \bigl(\beta+2\|A\|+\|G\|\|K\|\bigr)
  }{
    \lambda_{\min}(Q)^2
  }.
\]
Since $\|K\|\le C_K$, $\|G\|=\|BN^{-1}B^\top\|$, and \eqref{eq:coeff_errors_beta_explicit_proof}-\eqref{eq:G_error_beta_explicit_proof} hold, \eqref{eq:K_error_sun_prelemma} gives
\begin{equation}
\label{eq:K_error_beta_explicit_proof}
  \|\hat K-K\|
  \le
  \frac{2}{\lambda_{\min}(Q)^2}
  \bigl(C_{K,1}+\beta C_{K,2}\bigr)\Delta t.
\end{equation}

\medskip
\noindent
\textbf{Step 3: perturbation of the mean-field Riccati equation.}
Since $D=\bar D=F=I=0$, \eqref{eq:ARE_SIMPLIFY_RICCATI_LAMBDA} reduces to
\[
  \beta\Lambda
  =
  Q+\bar Q
  +
  \Lambda\tilde A+\tilde A^\top\Lambda
  -
  \Lambda BN^{-1}B^\top\Lambda,
  \qquad
  \tilde A:=A+\bar A.
\]
Equivalently,
\begin{equation}
\label{eq:Lambda_exact_CARE_beta_explicit_proof}
  Q+\bar Q
  +
  \left(\tilde A-\frac{\beta}{2}I_d\right)^\top\Lambda
  +
  \Lambda\left(\tilde A-\frac{\beta}{2}I_d\right)
  -
  \Lambda G\Lambda
  =
  0.
\end{equation}
The LQR-MF-PhiBE Riccati equation \eqref{eq:LQR-MF-PhiBE-ARE-Lambda} is equivalent to
\begin{equation}
\label{eq:Lambda_hat_CARE_beta_explicit_proof}
  Q+\bar Q
  +
  \left(\hat A+\hat{\bar A}-\frac{\beta}{2}I_d\right)^\top
  \hat\Lambda
  +
  \hat\Lambda
  \left(\hat A+\hat{\bar A}-\frac{\beta}{2}I_d\right)
  -
  \hat\Lambda\hat G\hat\Lambda
  =
  0.
\end{equation}
We apply \cite[Theorem~3.1]{Sun1998} again. For \eqref{eq:Lambda_exact_CARE_beta_explicit_proof}, we take $Q_0=Q+\bar Q$, $A_0=A+\bar A-\frac{\beta}{2}I_d$, and $G_0=G$. Then \eqref{eq:Lambda_hat_CARE_beta_explicit_proof} corresponds to the coefficient differences
\[
\Delta Q_\Lambda:=0,
\qquad
\Delta A_\Lambda:=(\hat A-A)+(\hat{\bar A}-\bar A),
\qquad
\Delta G_\Lambda:=\hat G-G.
\]
By \eqref{eq:coeff_errors_beta_explicit_proof} and \eqref{eq:G_error_beta_explicit_proof}, we have $\|\Delta A_\Lambda\|\le 2C_{\tilde A}\Delta t$ and
\[
  \|\Delta G_\Lambda\|
  \le
  \Bigl[
    2C_B\|N^{-1}\|\|B\|
    +
    C_B^2\|N^{-1}\|\Delta t_*
  \Bigr]\Delta t.
\]
Let $l_\Lambda,p_\Lambda,q_\Lambda$ be the corresponding perturbation constants in \cite[Theorem~3.1]{Sun1998}. Set
\[
\varepsilon_\Lambda:=p_\Lambda\|\Delta A_\Lambda\|+q_\Lambda\|\Delta G_\Lambda\|,
\qquad
\delta_\Lambda:=\|\Delta A_\Lambda\|+\|\Delta G_\Lambda\|\,\|\Lambda\|,
\qquad
\hat g_\Lambda:=\|G+\Delta G_\Lambda\|.
\]
The previous estimates give $\varepsilon_\Lambda\to0$ and $\delta_\Lambda\to0$ as $\Delta t\to0$. Hence, after reducing $\Delta t_*$ if necessary, the smallness condition in \cite[Theorem~3.1]{Sun1998} holds. Moreover, the denominator in the bound of \cite[Theorem~3.1]{Sun1998} is
\[
l_\Lambda-2\delta_\Lambda+
\sqrt{(l_\Lambda-2\delta_\Lambda)^2-4l_\Lambda\hat g_\Lambda\varepsilon_\Lambda},
\]
which converges to $2l_\Lambda$ as $\Delta t\to0$. Reducing $\Delta t_*$ again if necessary, this denominator is bounded from below by $l_\Lambda$ for every $\Delta t\in(0,\Delta t_*]$. Therefore \cite[Theorem~3.1]{Sun1998} gives
\begin{equation}
\label{eq:Lambda_error_sun_prelemma}
  \|\hat\Lambda-\Lambda\|
  \le
  2\left(p_\Lambda\|\Delta A_\Lambda\|+q_\Lambda\|\Delta G_\Lambda\|\right).
\end{equation}
By \Cref{lem:sun_constants_explicit_beta},
\[
  p_\Lambda
  \le
  \frac{
    2\|\Lambda\|^3
    \bigl(\beta+2\|A+\bar A\|+\|G\|\|\Lambda\|\bigr)
  }{
    \lambda_{\min}(Q+\bar Q)^2
  },
\]
and
\[
  q_\Lambda
  \le
  \frac{
    \|\Lambda\|^4
    \bigl(\beta+2\|A+\bar A\|+\|G\|\|\Lambda\|\bigr)
  }{
    \lambda_{\min}(Q+\bar Q)^2
  }.
\]
Since $\|\Lambda\|\le C_\Lambda$ and $\|G\|=\|BN^{-1}B^\top\|$, using \eqref{eq:coeff_errors_beta_explicit_proof} and \eqref{eq:G_error_beta_explicit_proof} in \eqref{eq:Lambda_error_sun_prelemma} gives
\begin{equation}
\label{eq:Lambda_error_beta_explicit_proof}
  \|\hat\Lambda-\Lambda\|
  \le
  \frac{2}{\lambda_{\min}(Q+\bar Q)^2}
  \bigl(C_{\Lambda,1}+\beta C_{\Lambda,2}\bigr)\Delta t.
\end{equation}

\medskip
\noindent
\textbf{Step 4: estimate of the policy means.}
Define
\begin{equation}
\label{eq:def_policy_mean_exact_lqr_phibe}
    m^*(s,\mu)
    :=
    -N^{-1}B^\top
    \bigl(K(s-\bar\mu)+\Lambda\bar\mu\bigr),
    \qquad
    \hat m^*(s,\mu)
    :=
    -N^{-1}\hat B^\top
    \bigl(\hat K(s-\bar\mu)+\hat\Lambda\bar\mu\bigr),
\end{equation}
then we have
\begin{align}
\label{eq:policy_difference_decomposition_beta_explicit_proof}
  \hat m^*(s,\mu)-m^*(s,\mu)
  &=
  N^{-1}(B^\top K-\hat B^\top\hat K)(s-\bar\mu)
  +
  N^{-1}(B^\top\Lambda-\hat B^\top\hat\Lambda)\bar\mu .
\end{align}
For the first coefficient,
\[
  B^\top K-\hat B^\top\hat K
  =
  B^\top(K-\hat K)+(B-\hat B)^\top\hat K.
\]
Using \eqref{eq:coeff_errors_beta_explicit_proof} and \eqref{eq:K_error_beta_explicit_proof},
\begin{align*}
  \|B^\top K-\hat B^\top\hat K\|
  &\le
  \|B\|\,\|\hat K-K\|
  +
  \|\hat B-B\|\,\|\hat K\| \\
  &\le
  2\frac{\|B\|}{\lambda_{\min}(Q)^2}
  \bigl(C_{K,1}+\beta C_{K,2}\bigr)\Delta t
  +
  C_B\Delta t
  \bigl(\|K\|+\|\hat K-K\|\bigr) \\
  &\le
  \left[
    2\frac{\|B\|}{\lambda_{\min}(Q)^2}
    \bigl(C_{K,1}+\beta C_{K,2}\bigr)
    +
    C_{K,3}
  \right]\Delta t
  +
  2\frac{C_B}{\lambda_{\min}(Q)^2}
  \bigl(C_{K,1}+\beta C_{K,2}\bigr)\Delta t^2.
\end{align*}
Similarly,
\[
  B^\top\Lambda-\hat B^\top\hat\Lambda
  =
  B^\top(\Lambda-\hat\Lambda)+(B-\hat B)^\top\hat\Lambda,
\]
and \eqref{eq:Lambda_error_beta_explicit_proof} gives
\begin{align*}
  \|B^\top\Lambda-\hat B^\top\hat\Lambda\|
  &\le
  \left[
    2\frac{\|B\|}{\lambda_{\min}(Q+\bar Q)^2}
    \bigl(C_{\Lambda,1}+\beta C_{\Lambda,2}\bigr)
    +
    C_{\Lambda,3}
  \right]\Delta t
  +
  2\frac{C_B}{\lambda_{\min}(Q+\bar Q)^2}
  \bigl(C_{\Lambda,1}+\beta C_{\Lambda,2}\bigr)\Delta t^2.
\end{align*}
Substituting the last two estimates into \eqref{eq:policy_difference_decomposition_beta_explicit_proof} and taking norms gives exactly \eqref{eq:error_policy_mean_lqr_phibe}.

\medskip
\noindent

\textbf{Step 5: cancellation of the averaged feedback for $\beta=0$.}
Assume now that $\beta=0$ and
$\Lambda\widetilde A=\widetilde A^\top\Lambda$. Set
\[
F_{\Delta t}
:=
\frac1{\Delta t}\int_0^{\Delta t}e^{\widetilde A r}\,dr.
\]
The variation-of-constants formula gives
\[
\hat B=F_{\Delta t}B,
\qquad
\widehat{\widetilde A}:=\hat A+\hat{\bar A}
=\widetilde A F_{\Delta t}=F_{\Delta t}\widetilde A.
\]
Since $F_{\Delta t}\to I_d$, it is invertible for sufficiently small
$\Delta t$. The symmetry assumption and the power-series expansion of
$F_{\Delta t}$ give
\[
\Lambda F_{\Delta t}=F_{\Delta t}^\top\Lambda.
\]
Consequently
\[
P_{\Delta t}:=\Lambda F_{\Delta t}^{-1}
=F_{\Delta t}^{-\top}\Lambda
\]
is symmetric. Since $\Lambda\succ0$ and $P_{\Delta t}\to\Lambda$ as
$\Delta t\to0$, after decreasing $\Delta t_*$ if necessary one also has
$P_{\Delta t}\succ0$. Moreover,
\[
P_{\Delta t}\widehat{\widetilde A}=\Lambda\widetilde A,
\qquad
P_{\Delta t}\hat B=\Lambda B.
\]
Substituting these identities into the exact Riccati equation for
$\Lambda$ shows that
\[
Q+\bar Q
+P_{\Delta t}\widehat{\widetilde A}
+\widehat{\widetilde A}^{\top}P_{\Delta t}
-P_{\Delta t}\hat B N^{-1}\hat B^\top P_{\Delta t}=0.
\]
The corresponding closed-loop matrix is
\[
\widehat{\widetilde A}
-\hat B N^{-1}\hat B^\top P_{\Delta t}
=F_{\Delta t}
\bigl(\widetilde A-BN^{-1}B^\top\Lambda\bigr).
\]
The matrix in parentheses is Hurwitz and $F_{\Delta t}\to I_d$;
hence the displayed matrix is Hurwitz for small $\Delta t$. Thus
$P_{\Delta t}$ is the symmetric stabilizing solution of the estimated
Riccati equation. By uniqueness,
\[
\hat\Lambda=P_{\Delta t}=\Lambda F_{\Delta t}^{-1},
\qquad
\hat\Lambda\hat B=\Lambda B.
\]
Taking transposes and using
$\int(s-\bar\mu)\,\mu(ds)=0$ proves
\[
\mathbb E_{s\sim\mu}
\bigl[\widehat{\bar\pi}^*(s,\mu)-\bar\pi^*(s,\mu)\bigr]=0.
\]

\medskip
\noindent
\textbf{Step 6: Value function error.}
We now estimate the value gap between the optimal LQR policy and the LQR-MF-PhiBE policy, both evaluated in the original dynamics. Let $L:=N^{-1}B^\top K$, $\hat L:=N^{-1}\hat B^\top\hat K$, $U:=N^{-1}B^\top\Lambda$, and $\hat U:=N^{-1}\hat B^\top\hat\Lambda$. By \eqref{eq:def_policy_mean_exact_lqr_phibe}, the feedback means of $\pi^*$ and $\hat\pi^*$ are
\begin{align}
\label{eq:value-step-feedback-means}
m^*(x,\nu)=-L(x-\bar\nu)-U\bar\nu,
\qquad
\hat m^*(x,\nu)=-\hat L(x-\bar\nu)-\hat U\bar\nu.
\end{align}
The two Gaussian policies have covariance $\frac{\lambda}{2}N^{-1}$. Hence, using \eqref{eq:reward_LQR_simplify} and \eqref{eq:reg_reward_avg_mkv}, there exists a constant $c_{\lambda,N}$, independent of $\nu$, such that
\begin{align}
\label{eq:value-step-reward-pistar}
r_\lambda^{\pi^*}(\nu)
=
-\operatorname{Tr}\bigl((Q+L^\top NL)\operatorname{Cov}(\nu)\bigr)
-\bar\nu^\top(Q+\bar Q+U^\top NU)\bar\nu
+c_{\lambda,N},
\end{align}
and
\begin{align}
\label{eq:value-step-reward-pihat}
r_\lambda^{\hat\pi^*}(\nu)
=
-\operatorname{Tr}\bigl((Q+\hat L^\top N\hat L)\operatorname{Cov}(\nu)\bigr)
-\bar\nu^\top(Q+\bar Q+\hat U^\top N\hat U)\bar\nu
+c_{\lambda,N}.
\end{align}

Let $(\hat\mu_t)_{t\ge0}$ be the law flow generated by the original dynamics under $\hat\pi^*$, and set $\hat C_t:=\operatorname{Cov}(\hat\mu_t)$. From \eqref{eq:value-step-feedback-means}, the mean and covariance satisfy
\begin{align}
\label{eq:value-step-flow-pihat}
\dot{\bar{\hat\mu}}_t=(A+\bar A-B\hat U)\bar{\hat\mu}_t,
\qquad
\dot{\hat C}_t=(A-B\hat L)\hat C_t+\hat C_t(A-B\hat L)^\top+\gamma\gamma^\top .
\end{align}
Moreover, by the explicit LQR value formula,
\begin{align}
\label{eq:value-step-optimal-value}
\mathcal V^{\pi^*}(\nu)
=
-\operatorname{Tr}\bigl(K\operatorname{Cov}(\nu)\bigr)
-\bar\nu^\top\Lambda\bar\nu
-R.
\end{align}
Using \eqref{eq:value-step-optimal-value} along the flow $(\hat\mu_t)_{t\ge0}$ and integrating by parts, we obtain
\begin{align}
\label{eq:value-step-integration-by-parts}
\mathcal V^{\pi^*}(\mu)
=
\int_0^\infty e^{-\beta t}
\left[
-\frac{d}{dt}\mathcal V^{\pi^*}(\hat\mu_t)
+\beta\mathcal V^{\pi^*}(\hat\mu_t)
\right]dt .
\end{align}
Therefore, by the definition of $\mathcal V^{\hat\pi^*}$,
\begin{align}
\label{eq:value-step-gap-before-integrand}
\mathcal V^{\pi^*}(\mu)-\mathcal V^{\hat\pi^*}(\mu)
=
\int_0^\infty e^{-\beta t}
\left[
-\frac{d}{dt}\mathcal V^{\pi^*}(\hat\mu_t)
+\beta\mathcal V^{\pi^*}(\hat\mu_t)
-r_\lambda^{\hat\pi^*}(\hat\mu_t)
\right]dt .
\end{align}

We compute the integrand in \eqref{eq:value-step-gap-before-integrand}. Using \eqref{eq:value-step-flow-pihat}, \eqref{eq:value-step-reward-pihat}, and \eqref{eq:value-step-optimal-value}, we get
\begin{align}
\label{eq:value-step-integrand-expanded}
&-\frac{d}{dt}\mathcal V^{\pi^*}(\hat\mu_t)
+\beta\mathcal V^{\pi^*}(\hat\mu_t)
-r_\lambda^{\hat\pi^*}(\hat\mu_t)=
\operatorname{Tr}\Bigl(
\bigl[
(A-B\hat L)^\top K+K(A-B\hat L)-\beta K+Q+\hat L^\top N\hat L
\bigr]\hat C_t
\Bigr)
\notag\\
&\qquad\qquad
+
\bar{\hat\mu}_t^\top
\bigl[
(A+\bar A-B\hat U)^\top\Lambda+\Lambda(A+\bar A-B\hat U)-\beta\Lambda+Q+\bar Q+\hat U^\top N\hat U
\bigr]
\bar{\hat\mu}_t
\notag\\
&\qquad\qquad
-\beta R+\gamma^\top K\gamma-c_{\lambda,N}.
\end{align}
The scalar term in \eqref{eq:value-step-integrand-expanded} vanishes by \eqref{eq:ARE_SIMPLIFY_RICCATI_R}. We next simplify the two matrix residuals in \eqref{eq:value-step-integrand-expanded}. Since \eqref{eq:ARE_SIMPLIFY_RICCATI_K} is equivalent to
\begin{align}
\label{eq:value-step-K-residual-zero}
(A-BL)^\top K+K(A-BL)-\beta K+Q+L^\top NL=0,
\end{align}
inserting $K$ into the closed-loop equation associated with $\hat L$ gives
\begin{align}
\label{eq:value-step-K-residual-expansion}
(A-B\hat L)^\top K+K(A-B\hat L)-\beta K+Q&+\hat L^\top N\hat L=
(A-BL)^\top K+K(A-BL)-\beta K+Q+L^\top NL
\notag\\
&
-(\hat L-L)^\top B^\top K-KB(\hat L-L)+\hat L^\top N\hat L-L^\top NL .
\end{align}
The first line on the right-hand side of \eqref{eq:value-step-K-residual-expansion} vanishes by \eqref{eq:value-step-K-residual-zero}. Since $L=N^{-1}B^\top K$, we have $B^\top K=NL$ and $KB=L^\top N$. Hence
\begin{align}
\label{eq:value-step-K-residual-quadratic}
(A-B\hat L)^\top K+K(A-B\hat L)-\beta K+Q+\hat L^\top N\hat L
=
(\hat L-L)^\top N(\hat L-L).
\end{align}
Similarly, \eqref{eq:ARE_SIMPLIFY_RICCATI_LAMBDA} is equivalent to
\begin{align}
\label{eq:value-step-Lambda-residual-zero}
(A+\bar A-BU)^\top\Lambda+\Lambda(A+\bar A-BU)-\beta\Lambda+Q+\bar Q+U^\top NU=0.
\end{align}
Inserting $\Lambda$ into the closed-loop equation associated with $\hat U$ and using $U=N^{-1}B^\top\Lambda$, we obtain
\begin{align}
\label{eq:value-step-Lambda-residual-quadratic}
(A+\bar A-B\hat U)^\top\Lambda+\Lambda(A+\bar A-B\hat U)-\beta\Lambda+Q+\bar Q+\hat U^\top N\hat U
=
(\hat U-U)^\top N(\hat U-U).
\end{align}
Substituting \eqref{eq:value-step-K-residual-quadratic} and \eqref{eq:value-step-Lambda-residual-quadratic} into \eqref{eq:value-step-integrand-expanded}, and then into \eqref{eq:value-step-gap-before-integrand}, yields
\begin{align}
\label{eq:value-step-quadratic-representation}
\mathcal V^{\pi^*}(\mu)-\mathcal V^{\hat\pi^*}(\mu)
&=
\int_0^\infty e^{-\beta t}
\operatorname{Tr}\bigl((\hat L-L)^\top N(\hat L-L)\hat C_t\bigr)dt
+
\int_0^\infty e^{-\beta t}
\bar{\hat\mu}_t^\top(\hat U-U)^\top N(\hat U-U)\bar{\hat\mu}_t\,dt .
\end{align}
In particular, $\mathcal V^{\pi^*}(\mu)-\mathcal V^{\hat\pi^*}(\mu)\ge0.$ It remains to estimate the right-hand side of \eqref{eq:value-step-quadratic-representation}. From Step 4, we have already proved
\begin{align}
\label{eq:value-step-product-errors}
\|B^\top K-\hat B^\top\hat K\|\le C\Delta t,
\qquad
\|B^\top\Lambda-\hat B^\top\hat\Lambda\|\le C\Delta t .
\end{align}
Therefore, by the definitions of $L,\hat L,U,\hat U$,
\begin{align}
\label{eq:value-step-L-U-errors}
\|\hat L-L\|+\|\hat U-U\|
&\le
\|N^{-1}\|
\left(
\|\hat B^\top\hat K-B^\top K\|
+
\|\hat B^\top\hat\Lambda-B^\top\Lambda\|
\right)
\le C\Delta t .
\end{align}
Because $K$ and $\Lambda$ are the stabilizing solutions of the
discounted CAREs, the shifted matrices
\[
A_\beta-BL,
\qquad
\widetilde A_\beta-BU,
\qquad
A_\beta:=A-\frac\beta2I_d,
\quad
\widetilde A_\beta:=A+\bar A-\frac\beta2I_d,
\]
are Hurwitz. Since $\hat L\to L$ and $\hat U\to U$, after decreasing
$\Delta t_*$ there exist $C,c>0$, independent of $\Delta t$, such that
\[
\|e^{(A_\beta-B\hat L)t}\|
+
\|e^{(\widetilde A_\beta-B\hat U)t}\|
\le Ce^{-ct},
\qquad t\ge0.
\]
The mean equation in \eqref{eq:value-step-flow-pihat} therefore gives
\[
e^{-\beta t}|\bar{\hat\mu}_t|^2
=
\left|
e^{(\widetilde A_\beta-B\hat U)t}\bar\mu
\right|^2
\le
Ce^{-2ct}|\bar\mu|^2.
\]
For the covariance, variation of constants yields
\[
\hat C_t
=
e^{(A-B\hat L)t}\operatorname{Cov}(\mu)
e^{(A-B\hat L)^\top t}
+
\int_0^t
e^{(A-B\hat L)u}\gamma\gamma^\top
e^{(A-B\hat L)^\top u}\,du.
\]
Consequently,
\[
e^{-\beta t}\operatorname{Tr}(\hat C_t)
\le
Ce^{-2ct}\operatorname{Tr}(\operatorname{Cov}(\mu))
+
C|\gamma|^2
\int_0^t e^{-\beta(t-u)}e^{-2cu}\,du.
\]
Since $\beta>0$, integration in time and Fubini's theorem give
\begin{equation}
\label{eq:value-step-second-moment-bound}
\int_0^\infty e^{-\beta t}
\left(
\operatorname{Tr}(\hat C_t)+|\bar{\hat\mu}_t|^2
\right)dt
\le
C\bigl(1+m_2(\mu)\bigr),
\end{equation}
with $C$ independent of $\Delta t$. The same pointwise estimates imply
\[
e^{-\beta t}
\left(
\operatorname{Tr}(\hat C_t)+|\bar{\hat\mu}_t|^2
\right)
\longrightarrow0,
\qquad t\to\infty,
\]
and hence justify the boundary limit used in
\eqref{eq:value-step-integration-by-parts}.
Combining \eqref{eq:value-step-quadratic-representation}, \eqref{eq:value-step-L-U-errors}, and \eqref{eq:value-step-second-moment-bound}, we get
\begin{align*}
0
\le
\mathcal V^{\pi^*}(\mu)-\mathcal V^{\hat\pi^*}(\mu)
&\le
C\left(\|\hat L-L\|^2+\|\hat U-U\|^2\right)
\int_0^\infty e^{-\beta t}
\left(
\operatorname{Tr}(\hat C_t)+|\bar{\hat\mu}_t|^2
\right)dt\le
C\Delta t^2\bigl(1+m_2(\mu)\bigr).
\end{align*}
This proves \eqref{eq:error_value_functions_LQR}.
\end{proof}

\begin{lemma}[Second-moment bound for the MKV-LQR flow when $\beta=0$]
\label{lem:m2-bound-beta-zero-lqr-mkv}
Assume \Cref{ass:LQR-MKV} with $\beta=0$. Let $(K,\Lambda)$ be the stabilizing solution of \eqref{eq:ARE_SIMPLIFY_RICCATI_K}-\eqref{eq:ARE_SIMPLIFY_RICCATI_LAMBDA}, and let $\pi^*$ be the Gaussian feedback in \eqref{eq:LQR-MKV-optimal-policy-candidate}. Let $(s_t,\mu_t)_{t\ge0}$ be the MKV dynamics \eqref{eq:MKV_only_SDE} under $\pi^*$, with $\Law(s_t)=\mu_t$ and $\mu_0\in\mathcal P_2(\mathbb R^d)$. Then there exist constants $C\ge1$ and $\kappa>0$ such that, for every $t\ge0$,
\[
m_2(\mu_t)
\le
C e^{-2\kappa t}m_2(\mu_0)+C|\gamma|^2.
\]
\end{lemma}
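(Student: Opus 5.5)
Under $\pi^*$ the averaged drift is $b^{\pi^*}(x,\mu)=Ax+\bar A\bar\mu-BN^{-1}B^\top\bigl(K(x-\bar\mu)+\Lambda\bar\mu\bigr)$, because the Gaussian noise of the policy averages out in the relaxed formulation. The diffusion stays equal to $\gamma$, since it is additive and independent of the action. The plan is to split $m_2(\mu_t)=\operatorname{Tr}(C_t)+|\bar\mu_t|^2$, with $C_t:=\operatorname{Cov}(\mu_t)$, and to study the two closed linear ODEs separately. Taking expectations in \eqref{eq:MKV_only_SDE} gives
\[
\dot{\bar\mu}_t=(\tilde A-BU)\bar\mu_t,\qquad U:=N^{-1}B^\top\Lambda .
\]
For the centered process $s_t-\bar\mu_t$, Itô's formula gives the Lyapunov equation
\[
\dot C_t=(A-BL)C_t+C_t(A-BL)^\top+\gamma\gamma^\top,\qquad L:=N^{-1}B^\top K .
\]
This is the computation already used in \eqref{eq:value-step-flow-pihat}, now with $(L,U)$ in place of $(\hat L,\hat U)$.

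The key input is that, with $\beta=0$, the stabilizing solutions make $A-BL$ and $\tilde A-BU$ Hurwitz. This is the definition of stabilizing solution in \Cref{prop:LQR-MKV-optimal-policy}. It also follows quantitatively from the argument of \Cref{lem:sun_constants_explicit_beta}: the closed-loop Lyapunov identity \eqref{eq:closed_loop_lyapunov_identity_X} gives
\[
(A-BL)^\top K+K(A-BL)\preceq -Q,
\]
and the resulting estimate \eqref{eq:semigroup_bound_X_metric} is
\[
\|e^{(A-BL)t}\|^2\le \frac{\|K\|}{\lambda_{\min}(K)}\,e^{-\lambda_{\min}(Q)t/\|K\|}.
\]
The analogous bound holds for $\tilde A-BU$ with $\Lambda$ and $Q+\bar Q$. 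Both $Q$ and $Q+\bar Q$ are positive definite by (H1), and $K,\Lambda\succ0$ by \eqref{eq:X_min_lower_beta}. Hence there are $C\ge1$ and $\kappa>0$ with
\[
\|e^{(A-BL)t}\|+\|e^{(\tilde A-BU)t}\|\le Ce^{-\kappa t}.
\]

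Now integrate both equations. The mean satisfies
\[
|\bar\mu_t|^2\le C^2e^{-2\kappa t}|\bar\mu_0|^2 .
\]
Variation of constants gives
\[
C_t=e^{(A-BL)t}C_0e^{(A-BL)^\top t}+\int_0^te^{(A-BL)u}\gamma\gamma^\top e^{(A-BL)^\top u}\,du .
\]
Consequently
\[
\operatorname{Tr}(C_t)\le dC^2e^{-2\kappa t}\operatorname{Tr}(C_0)+C^2|\gamma|^2\int_0^\infty e^{-2\kappa u}\,du .
\]
Finally, use $\operatorname{Tr}(C_0)+|\bar\mu_0|^2=m_2(\mu_0)$ and enlarge $C$ to absorb $d$ and $1/(2\kappa)$. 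This gives the claim.

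The only delicate step is justifying that the stabilizing solutions exist and are positive definite at $\beta=0$, which is needed for uniform exponential decay. I would rely on (H1)–(H2) with $\beta=0$ and on the standard CARE theory, as invoked in \Cref{prop:LQR-MKV-optimal-policy}. Everything else is explicit linear ODE bookkeeping.
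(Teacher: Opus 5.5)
Your proposal is correct and follows essentially the same route as the paper. You split $m_2(\mu_t)$ into $|\bar\mu_t|^2$ and the centered part, use that the stabilizing solutions make $A-BN^{-1}B^\top K$ and $A+\bar A-BN^{-1}B^\top\Lambda$ Hurwitz, and bound the additive-noise contribution uniformly in $t$. The only difference is cosmetic: you integrate the covariance Lyapunov ODE by variation of constants, whereas the paper applies It\^o's formula to a quadratic Lyapunov function for the centered process and then uses Gronwall. Incidentally, you do not need $K,\Lambda\succ0$, since any Hurwitz matrix already has an exponentially decaying semigroup.
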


\begin{proof}
Let $\bar\mu_t:=\int_{\mathbb R^d}x\,\mu_t(dx)$ and set $\xi_t:=s_t-\bar\mu_t$. Under \eqref{eq:LQR-MKV-optimal-policy-candidate}, $\int_{\mathbb R^m}a\pi^*(da\mid x,\mu)
=
-N^{-1}B^\top\bigl(K(x-\bar\mu)+\Lambda\bar\mu\bigr).$
Since $\sigma(x,\mu,a)=\gamma$, the closed-loop MKV dynamics is
\[
d s_t
=
\bigl[A_K(s_t-\bar\mu_t)+A_\Lambda\bar\mu_t\bigr]dt
+
\gamma\,dW_t,
\]
where $A_K:=A-BN^{-1}B^\top K$ and $A_\Lambda:=A+\bar A-BN^{-1}B^\top\Lambda.$ Taking expectation gives $\dot{\bar\mu}_t=A_\Lambda\bar\mu_t$. Since $\Lambda$ is the stabilizing solution of \eqref{eq:ARE_SIMPLIFY_RICCATI_LAMBDA}, $A_\Lambda$ is Hurwitz. Hence there exist $C_\Lambda\ge1$ and $\kappa_\Lambda>0$ such that
\[
|\bar\mu_t|^2
\le
C_\Lambda e^{-2\kappa_\Lambda t}|\bar\mu_0|^2.
\]

The estimate for $\xi_t$ follows from the same quadratic energy argument used in \cite[Lemma~A.8]{firstpaper}. Indeed, $\xi_t$ solves
\[
d\xi_t=A_K\xi_t\,dt+\gamma\,dW_t,
\]
and the stabilizing property of $K$ gives a Lyapunov function for $A_K$. Applying It\^o's formula to this quadratic Lyapunov function and using Gronwall's inequality yields constants $C_K\ge1$ and $\kappa_K>0$ such that
\[
\mathbb E|\xi_t|^2
\le
C_K e^{-2\kappa_K t}\mathbb E|\xi_0|^2
+
C_K|\gamma|^2.
\]
Finally, since $m_2(\mu_t)=\mathbb E|s_t|^2=\mathbb E|\xi_t|^2+|\bar\mu_t|^2$ and $m_2(\mu_0)=\mathbb E|\xi_0|^2+|\bar\mu_0|^2,$ taking $\kappa:=\min\{\kappa_K,\kappa_\Lambda\}$ and increasing $C$ if necessary gives
\[
m_2(\mu_t)
\le
C e^{-2\kappa t}m_2(\mu_0)+C|\gamma|^2.
\]
\end{proof}

    \bibliographystyle{plain} 
    \bibliography{biblio.bib} 
    
\end{document}